\documentclass[11pt,a4paper]{article}
\usepackage[utf8]{inputenc}
\usepackage[T1]{fontenc}
\usepackage{amsmath, amssymb, amsthm, amsfonts,mathrsfs}
\usepackage{float}
\usepackage{geometry}
\usepackage[hidelinks]{hyperref}
\usepackage{tikz-cd}
\usepackage{enumitem}

\newcommand{\Q}{\mathbf{Q}}
\newcommand{\Z}{\mathbf{Z}}
\newcommand{\R}{\mathbf{R}}
\newcommand{\C}{\mathbf{C}}
\newcommand{\N}{\mathbf{N}}
\newcommand{\A}{\mathbb{A}}
\newcommand{\Af}{\mathbb{A}_f}
\def\F{{\mathbf F}}
\def\N{{\mathbf N}}

\def\nat{\widehat{\N}^\times}
\newcommand{\Zhat}{\widehat{\Z}}

\newcommand{\XQ}{X_{\Q}}
\newcommand{\YQ}{Y_{\Q}}
\newcommand{\CQ}{C_{\Q}}

\def\Hom{\rm{Hom}}
\def\spz{\Spec \Z}
\def\spzb{\overline{\Spec \Z}}
\def\znt{{\widehat\Z^\times}}
\newcommand{\Spec}{\operatorname{Spec}}
\def\Pic{\operatorname{Pic}}
\def\Jac{\operatorname{Jac}}
\def\Div{{\rm Div}}
\def\PDiv{{\rm PDiv}}

\def\tspz{\widetilde{\Pic}(\spzb)}
\def\spz{\Spec \Z}

\def\tf1{{\rm torsion free rank one groups}}
\def\MFr{\operatorname{Pic}_{Fr}}
\def\MRt{\operatorname{Pic}_{Rt}}
\def\F1{\mathbf F_1}

\def\Idem{\rm{Idem}}

\def\cS{{\mathcal S}}

\def\Tr{\rm{Trace}}

\def\ie{{\it i.e.\/}\ }

\theoremstyle{plain}
\newtheorem{thm}{Theorem}[section]
\newtheorem{defn}[thm]{Definition}
\newtheorem{prop}[thm]{Proposition}
\newtheorem{fact}[thm]{Fact}

\newtheorem{lem}[thm]{Lemma}
\newtheorem{rem}[thm]{Remark}

\newtheorem*{defn*}{Definition}
\newtheorem*{example*}{Example}
\newtheorem*{prop*}{Proposition}
\newtheorem*{thm*}{Theorem}
\newtheorem*{rem*}{Remark}
\newtheorem*{cor*}{Corollary}
\newtheorem*{lem*}{Lemma}

\theoremstyle{definition} 

\newtheorem*{claim*}{Claim}
\newtheorem*{conj*}{Conjecture}
\newtheorem*{issue*}{Issue}

\title{On the Jacobian of $\spzb$}
\author{Alain Connes and Caterina Consani\thanks{Partially supported by the Travel Support for Mathematicians n. 691493, Simons Foundation.}}
\date{}

\begin{document}

\maketitle

\begin{abstract}
\noindent We interpret the structure of the adele class space of the rationals—and specifically its Riemann sector—as the natural monoidal extension of the Picard group of the arithmetic curve $\overline{\operatorname{Spec} \Z}$.  We identify the elements of this space with torsion-free rank-1 abelian groups $L$ endowed with rigidifying data. In the Riemann sector, this data corresponds to a norm, extending the classical notion of metrized line bundles in Arakelov geometry. For the full adele class space, we replace the norm with a group morphism to $\R$ and a combinatorial datum: a parametrization of the roots of unity associated with the character dual of $L$. We show that the product of adeles is represented geometrically by the tensor product of these rank-1 groups and their rigidifying structures. 
The resulting monoid space generalizes the Picard group to the full adelic context by incorporating the singular strata required for the spectral realization of $L$-functions.

\paragraph{Key Words.} Adele class space, Riemann sector, Arakelov geometry, \sloppy Picard and Jacobian monoids, torsion-free rank-1 groups, metrized line bundles, ideles and adeles,  scaling action, arithmetic compactification.

\paragraph{MSC 2020.}  
\href{http://www.ams.org/mathscinet/msc/msc2020.html?t=14G40&btn=Current}{14G40}, 
\href{http://www.ams.org/mathscinet/msc/msc2020.html?t=11M55&btn=Current}{11M55}, 
\href{http://www.ams.org/mathscinet/msc/msc2020.html?t=14C20&btn=Current}{14C20}, 
\href{http://www.ams.org/mathscinet/msc/msc2020.html?t=11R42&btn=Current}{11R42}, 
\href{http://www.ams.org/mathscinet/msc/msc2020.html?t=06F05&btn=Current}{06F05}. 

\end{abstract}
\tableofcontents

\section{Introduction}

The starting point of this paper is the apparent paradox concerning the genus of the arithmetic curve $\spz$. On one hand, the classical Jacobian of $\spz$—its ideal class group—is trivial, suggesting a curve of genus zero. On the other hand, the spectral realization of the zeros of the Riemann zeta function \cite{Co-zeta}, as well as the explicit formulas of Riemann-Weil \cite{Weil}, strongly suggest that $\spz$ should be regarded as a curve of infinite genus.
To resolve this tension, we extend the classical definitions of the Jacobian and Picard group of $\spz$ to incorporate the infinite nature of the arithmetic curve (\S\ref{gendiv}). The new arithmetic Jacobian is the natural ambient space for studying étale covers—both geometrically and adelically—via pullbacks using an analogue of the Abel-Jacobi map.

The first step in this construction is to extend the notion of a divisor on $\spz$. A classical divisor is a formal sum $D = \sum n_p [p]$ where the coefficients $n_p$ are integers and vanish for all but finitely many $p$. To accommodate the expected infinite genus, we relax these conditions in two ways.\vspace{.02in}

1) We allow the coefficients $n_p$ to take values in  $\Z \cup \{\infty\}$\vspace{.02in}

    and\vspace{.02in}
    
2) We allow the support of a divisor to be infinite, imposing only the condition that $n_p$ is negative for at most finitely many primes.\vspace{.02in}

This extended definition of divisors is justified by a canonical isomorphism  with the lattice of rank-1 subgroups of $\Q$ (Proposition~\ref{rankpro}). To any such generalized divisor $D = \sum n_p [p]$, one associates the module of global sections:
\[
\mathcal L(D) = \{ q \in \Q \mid v_p(q) \ge -n_p \quad \forall p \}.
\]
Conversely, any rank-1 subgroup $L \subset \Q$ determines a divisor $D$ by setting \sloppy $n_p = -\inf \{ v_p(x) \mid x \in L \}$, and one shows that $L=\mathcal L(D)$. 
In this framework, the classical notion of linear equivalence of divisors corresponds precisely to the isomorphism of the associated subgroups of $\Q$. Furthermore, the addition of divisors corresponds to the tensor product of these groups:
$
\mathcal L(D_1 + D_2) \cong \mathcal L(D_1) \otimes_\Z \mathcal L(D_2)
$.

The \emph{Picard monoid}  of $\spz$ is defined as the  quotient monoid
\[
\Pic(\spz)=\Div(\Spec(\Z)) / \PDiv(\Spec(\Z)).
\]
The above identification in terms of subgroups of $\Q$ provides  an algebraic description of  \(\Pic(\spz)\) as the (multiplicative) monoid of isomorphism classes of \emph{torsion-free rank-$1$ groups}.
In turn, this algebraic description has two equivalent incarnations: adelic and  geometric.
First, in terms of adeles, there is a monoid  isomorphism (Proposition~\ref{ident})
\[
\Pic(\spz)\cong \Q^\times \backslash \Af / \znt,
\]
where $\Af$ is the ring of finite rational adeles. Here, the monoid structure on $\Pic(\spz)$  corresponds to the product of adeles.
Second, $\Pic(\spz)$ can be identified   with the set of points of the arithmetic topos $\nat$ \cite{CC1}, which is dual to the multiplicative monoid $\N^\times$ of non-zero positive integers. The monoid structure on $\Pic(\spz)$ corresponds to the product of points in the topos induced by the multiplication map $\N^\times \times \N^\times \to \N^\times$ (Appendix \ref{appB}).

To capture the full geometry, we pass to the completion \(\spzb\), obtained by adjoining the archimedean place.
In section~\ref{extended} we extend the definition of divisors  by including, as in Arakelov geometry, a norm on the associated real lattice $L\otimes_\Z\R$. 
We name the resulting monoid of divisor classes, the \emph{arithmetic Picard}  $\Pic(\spzb)$. This is the moduli space of \emph{arithmetic divisors} $\mathcal D=(L,\Vert\cdot\Vert)$, where $L$ is a torsion free rank $1$ group, and $\Vert\cdot\Vert$ is a semi-norm on $L\otimes_\Z \R$. 
The group $\R_+^\times$ acts on $\Pic(\spzb)$  by rescaling of the norm. 

The monoid $\Pic(\spzb)$ has an adelic interpretation in terms of the monoid
\begin{equation}\label{riemsct}
\XQ = \Q^\times \backslash \A / \znt,
\end{equation}
the quotient of the rational adele class space $Y_\Q=\Q^\times \backslash \A$ by the maximal compact subgroup  $\znt\subset \CQ= \Q^\times \backslash \A^\times$ of the idele class group (\S \ref{adrel}):
\begin{thm}\label{xqthm}
The map  which  associates to a rational adele $a=(a_f,a_\infty)$ the pair consisting of:
\begin{itemize}[leftmargin=*, labelindent=0pt]
    \item[-] The group $L_a := \{q \in \Q \mid a_f q \in \widehat{\Z}\}$,
    \item[-] The semi-norm $\|\cdot\|_a$ on $L_a \otimes_\Z \R$ defined by $\|v\|_a = |a_\infty| \cdot |v|$ (via the canonical identification $L_a \otimes \R \cong \R$),
\end{itemize}
induces a canonical monoid isomorphism
\[
X_\Q \cong\Pic(\spzb)
\]
between the Riemann sector $X_\Q$ of the adele class space \eqref{riemsct} 
and the moduli space of isomorphism classes of arithmetic divisors $\mathcal D=(L, \|\cdot\|)$.
\end{thm}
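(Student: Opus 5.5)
The plan is to reduce everything to the finite-adele statement of Proposition~\ref{ident}, together with the lattice description of Proposition~\ref{rankpro}. The archimedean component then only contributes a scalar $|a_\infty|\ge 0$. The key observation is that $L_a\otimes_\Z\R\cong\R$ is one-dimensional, via the inclusion $L_a\subset\Q$. Hence any semi-norm on it has the form $v\mapsto c|v|$ with $c\ge 0$. So an arithmetic divisor is the same thing as a torsion-free rank-1 group together with such a constant. The constant must be read relative to a chosen embedding $L\subset\Q$, and changing the embedding by $q\in\Q^\times$ rescales it by $|q|^{-1}$.

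\emph{Step 1: the key lemma.} First I isolate the following fact, extracted from the proof of Proposition~\ref{ident} via the divisor $n_p=-v_p(a_p)$ of Proposition~\ref{rankpro}: for $a_f,b_f\in\Af$ one has $L_{a_f}=L_{b_f}$ as subgroups of $\Q$ if and only if $b_f\in a_f\znt$. In addition, $L_{qa_f}=q^{-1}L_{a_f}$ for $q\in\Q^\times$.

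\emph{Step 2: well-definedness.} For $u\in\znt$ the lemma gives $L_{ua}=L_a$, and $|(ua)_\infty|=|a_\infty|$. For $q\in\Q^\times$, multiplication by $q$ is a group isomorphism $L_{qa}=q^{-1}L_a\to L_a$. It carries $\|v\|_{qa}=|q a_\infty||v|$ to $\|qv\|_a=|a_\infty||qv|$, so it is an isometry and the class is unchanged.

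\emph{Step 3: surjectivity.} Given $(L,\|\cdot\|)$, I embed $L\subset\Q$. By Proposition~\ref{ident} (with Step 1) I choose $a_f$ with $L_{a_f}=L$. Then I write $\|v\|=c|v|$ and take $a_\infty=c$.

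\emph{Step 4: injectivity.} Suppose $(L_a,\|\cdot\|_a)\cong(L_b,\|\cdot\|_b)$. Since $\operatorname{Hom}(L_a,\Q)=\Q$ for a rank-1 group, the isomorphism is multiplication by some $q\in\Q^\times$, so $L_b=qL_a=L_{q^{-1}a_f}$. Step 1 then gives $b_f=q^{-1}a_f u$ with $u\in\znt$. Compatibility of norms gives $|a_\infty||v|=|b_\infty||qv|$, i.e. $|b_\infty|=|q^{-1}a_\infty|$. The remaining sign is absorbed by the pair $(-1\in\Q^\times,\,-1\in\znt)$, which flips $a_\infty$ while fixing the finite part modulo $\znt$. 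Hence $b\in\Q^\times a\znt$.

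\emph{Step 5: monoid structure.} The finite part is Proposition~\ref{ident}, giving $L_{ab}\cong L_a\otimes L_b$ through $x\otimes y\mapsto xy$. Under this map $|a_\infty b_\infty||xy|=\|x\|_a\|y\|_b$, so the product of adeles corresponds to the tensor product of semi-norms.

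The expected obstacle is Step 4 on degenerate strata, where $a_f$ has vanishing components, $L_a$ is non-finitely generated, or $a_\infty=0$. There the stabilizer of the class is large and one must make sure that the single rational $q$ produced by the group isomorphism works simultaneously at finite and archimedean places. Routing the argument through the exact equality criterion of Step 1, rather than through classes alone, is what makes this go through uniformly. The case $a_f=0$, $L=\Q$ is then checked directly.
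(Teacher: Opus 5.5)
Your proposal is correct and follows the paper's four-step proof: well-definedness, then injectivity via the rational multiplier $q$ of the isomorphism together with the criterion $L_{a_f}=L_{b_f}\iff b_f\in a_f\znt$, then surjectivity, then multiplicativity of the norms; your absorption of the sign via $(-1,-1)\in\Q^\times\times\znt$ makes explicit what the paper writes loosely as multiplication by $(u,\epsilon)$. One harmless slip: the divisor of $L_a$ is $n_p=v_p(a_p)$, not $-v_p(a_p)$, and this does not affect your Step 1.
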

The main feature of the above isomorphism 
is to give to the adelically defined space $X_\Q$ a geometric meaning as a moduli space of arithmetic divisor classes.

In section~\ref{jacob} we introduce  the \emph{arithmetic Jacobian} as the quotient
$$\Jac(\spzb)=\Pic(\spzb)/\R_+^\times.
$$
This definition  parallels the classical notion of the Jacobian of a curve as the quotient of its Picard group by the subgroup generated by a divisor of degree $1$.

The \emph{extended arithmetic Abel-Jacobi map} $\Theta$ is defined in \S \ref{sheaf} by sending a prime $p$ to the divisor $\infty \cdot [p]$, the archimedean place $\infty$ to the divisor $\infty \cdot [\infty]$, and the generic point $\eta$ to the zero divisor. In formulas one writes:
\[
\Theta: \spzb \longrightarrow \Jac(\spzb), \quad\Theta(x)= \begin{cases}(\mathbf{Z}[1 / p],\|\cdot\|\neq 0) & \text { if } x=p<\infty  \\ (\mathbf{Z}, 0) & \text { if } x=\infty \\ (\mathbf{Z},\|\cdot\|\neq 0) & \text { if } x=\eta.\end{cases}
\]
The non-vanishing of the norm ensures the uniqueness of the arithmetic divisor class. This map extends canonically to the infinite symmetric power of $\spzb$, mirroring the classical construction in algebraic geometry. In the function field case, the Abel-Jacobi map sends the $g$-th symmetric power of the curve to the Jacobian, which is an abelian variety of dimension $g$. Here, the arithmetic curve has infinite genus ($g=\infty$), and the target space is not an abelian variety but an idempotent semilattice. The extension yields a surjection from the infinite symmetric power onto the set of idempotents of $\Jac(\spzb)$, realizing the boundary structure of the Picard monoid as the limit of the symmetric powers of the curve.

\vspace{.05in}

In Section \ref{sectsheaf}, we connect the abstract theory of the arithmetic  Picard $\Pic(\spzb)$ to the sheaf-theoretic framework of $\F1$-algebras as in \cite{CC2}. 
\begin{thm}
An arithmetic divisor $\mathcal{D} = (L, \|\cdot\|)$ defines a sheaf of modules $\mathcal{O}(\mathcal{D})$ over the structure sheaf $\mathcal O$ of the \emph{absolute curve} $\spzb$.
\begin{itemize}[leftmargin=*, labelindent=0pt]
    \item[-] At the finite primes, the sheaf $\mathcal{O}(\mathcal{D})$ is constructed using the Eilenberg-MacLane functor $H$ applied to the group of regular sections of $L$.
    \item[-] At the archimedean place, the metric $\|\cdot\|$ imposes the "unit ball" condition $$\sum_{x \neq\{*\}} \|\phi(x)\| \le 1$$ on the sections.
\end{itemize}
\end{thm}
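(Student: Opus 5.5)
Following \cite{CC2}, we regard $\spzb$ as a topological space whose points are the generic point $\eta$, the finite primes $p$, and the archimedean place $\infty$. The structure sheaf $\mathcal O$ takes values in $\Gamma$-sets (pointed $\F1$-modules). On an open set $U$ containing only finite primes, $\mathcal O(U)=H\mathcal O_U$ is the Eilenberg--MacLane $\Gamma$-ring of the ring $\mathcal O_U=\Z[S_U^{-1}]$ of $S_U$-integers. At $\infty$ the sections are cut out by the condition $\sum_{x\neq *}|\phi(x)|\le 1$. To build $\mathcal O(\mathcal D)$ we define it on opens and then check presheaf, sheaf and module axioms separately. Since $(L,\|\cdot\|)$ is only defined up to isomorphism, the first step is to fix a representative $L\subset\Q$. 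This is possible by Proposition~\ref{rankpro}. The choice also gives the identification $L\otimes\R=\R$, under which $\|v\|=\lambda|v|$ for some $\lambda\ge 0$.

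\textbf{Finite part.} For an open $U\subset\spz$, let $L_U=\{q\in\Q\mid v_p(q)\ge -n_p \ \forall p\in U\}$. Equivalently, $L_U=L\otimes_\Z\mathcal O_U$ (localizing away from the primes outside $U$), which is a module over $\mathcal O_U$. We set $\mathcal O(\mathcal D)(U)=HL_U$, so that on a finite set $X$ the sections are maps $\phi:X\to L_U$ with $\phi(*)=0$. Restriction maps are the inclusions $L_U\subset L_V$ for $V\subset U$, so functoriality is immediate. The gluing axiom reduces to the identity
\[
L_{U\cup V}=L_U\cap L_V \quad\text{inside } \Q,
\]
which holds because membership is a condition imposed prime by prime. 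Since $H$ preserves these limits pointwise, gluing follows. The $\mathcal O$-module structure comes from the $\mathcal O_U$-module structure on $L_U$, transported by the lax-monoidal map $HA\wedge HM\to HM$.

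\textbf{Archimedean part.} For an open $U$ containing $\infty$, let $U_f=U\setminus\{\infty\}$. We define $\mathcal O(\mathcal D)(U)(X)$ as the set of $\phi\in HL_{U_f}(X)$ satisfying $\sum_{x\neq *}\|\phi(x)\|\le 1$. Three checks are needed:
\begin{itemize}[leftmargin=*, labelindent=0pt]
\item[-] \emph{Functoriality in $X$.} For a pointed map $f:X\to Y$, the pushforward has components $f_*\phi(y)=\sum_{f(x)=y}\phi(x)$. By the triangle inequality $\sum_y\|f_*\phi(y)\|\le\sum_x\|\phi(x)\|\le1$, so the unit ball condition is preserved.
\item[-] \emph{Module structure.} For $\psi\in\mathcal O(U)(Y)$ and $\phi$ as above, the action gives $(\psi\phi)(y,x)=\psi(y)\phi(x)$. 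Multiplicativity $\|\psi(y)\phi(x)\|=|\psi(y)|\,\|\phi(x)\|$ yields $\sum\|\psi\phi\|\le(\sum|\psi|)(\sum\|\phi\|)\le1$.
\item[-] \emph{Sheaf condition.} This holds because the unit ball condition depends only on the stalk at $\infty$, while the finite gluing is already known from the previous step.
\end{itemize}

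\textbf{Independence of choices.} Finally we show that replacing $(L,\|\cdot\|)$ by an isomorphic divisor, i.e.\ multiplying by $q\in\Q^\times$ and transporting the norm, gives an isomorphic sheaf, via multiplication by $q$. This step is routine.

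I expect the main obstacle to be the degenerate cases. The first is $n_p=\infty$, where $L_U$ is no longer finitely generated over $\mathcal O_U$; the second is $\|\cdot\|=0$, where every section at $\infty$ lies in the unit ball. In both cases the statement still holds, but the compatibility between the finite and archimedean parts must be checked carefully. The point is to confirm that the stalks are the intended ones, namely $HL$ at $\eta$ and the unit-ball object at $\infty$, and not something larger that appears in the localization.
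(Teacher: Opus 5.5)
Your proposal is correct and follows the paper's construction. The paper's proof consists only of your module-structure computation $\sum_{(y,x)}\|\psi(y)\phi(x)\|=(\sum_y|\psi(y)|_\infty)(\sum_x\|\phi(x)\|)\le 1$, so your checks of functoriality in $X$, gluing via $L_{U\cup V}=L_U\cap L_V$, and independence of the representative only add detail; one aside in your last paragraph is off, though it does not affect the proof: the stalk at $\eta$ is $H\Q$ (the colimit of $HL_U$ over all nonempty opens), not $HL$, which is instead the group of sections over $\spz$.
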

A key feature of this result is that it unifies the treatment of classical Arakelov divisors (\ie with finite support) and the new arithmetic divisors (allowing infinite type). For the latter, the vanishing of the semi-norm implies that the metric constraint becomes vacuous, geometrically corresponding to allowing a pole of arbitrary order at infinity.\vspace{.05in}

The  arithmetic Jacobian  $\Jac(\spzb)$ is the base of a tower of adelic coverings.
At the top of this tower sits the adele class space $\YQ = \Q^\times \backslash \A$, carrying a natural action of the idele class group $\CQ$, while $\XQ$ fits-in as an important intermediate cover.

In section~\ref{sectpullback} we develop the construction of the corresponding coverings  of $\spzb$ via the pullback\footnote{This construction is  analogous to the classical geometric class field theory for a curve $C$ over a finite field. There, one constructs abelian coverings of $C$ by pulling back coverings of the Jacobian $J(C)$. Specifically, one considers the quotient $J(C) \to J(C)/\ell^n J(C)$, where the Galois group (the $\ell^n$-torsion points) acts by translation on the Jacobian. } of the adelic covers along the extended Abel-Jacobi map 
\[
\begin{tikzcd}
\mathcal{X}_\Q \arrow[r] \arrow[d] & X_\Q \arrow[d,two heads] \\
\spzb \arrow[r, "\Theta"] & \Jac(\spzb)
\end{tikzcd}
\]
The space $\mathcal{X}_\Q$ obtained by  pullback of the adelic $X_\Q$ along $\Theta$ coincides with what we termed in \cite{CC3} the "visible part" of the Riemann sector $X_\Q$. It consists of the union of the periodic orbits of the flow over the finite primes, the fixed point over the archimedean place and the generic orbit $\R^\times_+$ over the generic point $\eta$. The fiber of the  covering  $\pi$ over a finite prime $p$ is  the periodic orbit $C_p$, and the monoid structure of $X_\Q\cong \Pic(\spzb)$  coincides with the group structure of $C_p \cong \R / (\log p)\Z$.\vspace{.03in}

In Section~\ref{sectuniv} we  move to the ``universal level''. First, we describe the Galois correspondence  between intermediate covers of $\Jac(\spzb)$ and   closed subgroups of $\CQ$. The action of the "Galois group" $\CQ$ on these spaces is by "translation", which in this monoid context means multiplication by the invertible elements (ideles) acting on the space of adele classes.

In  \S \ref{sectcft} we first recall the construction (Theorem~\ref{main} in \cite{CC3}) of a functor from finite abelian extensions of $\Q$ to finite covers of the space $X_\Q$.
Then, in \S \ref{fibers} we consider the pullback, along the Abel-Jacobi map $\Theta$, of the covering $\YQ \to \XQ$. This adds a top layer to the previous tower, yielding a space $\widetilde{\spz}$ that covers $\mathcal{X}_\Q$
\[
\begin{tikzcd}
\widetilde{\spz} \arrow[r,"\tilde\Theta"]\arrow[d] & Y_\Q\arrow[d]
\\
\mathcal{X}_\Q \arrow[r]  & X_\Q 
\end{tikzcd}
\]

  The geometry of the universal cover $\widetilde{\spz}$ over $C_p\subset \mathcal X_\Q$  (\ie the fiber over $C_p$) captures the interaction between the prime $p\in \spzb$ and all other primes $\ell \neq p$.
Specifically, the restriction of $\widetilde{\spz}$ to the orbit $C_p$ is the mapping torus of the Frobenius automorphism acting on the local units away from $p$. Let $
K^{(p)} = \prod_{\ell \neq p} \Z_\ell^\times$
be the group of units of the adeles omitting the component at $p$. Then the fiber over $C_p$ is identified with the quotient:
\[
\mathcal{T}_p = \left(K^{(p)} \times \R\right)/\Big((u, t) \sim (p u, t + \log p)\Big).
\]
Here, the generator of the fundamental group of the orbit $C_p$ acts on the fiber $K^{(p)}$ by multiplication by $p$. This action corresponds, via the Artin map, to the action of the Frobenius at $p$ on the abelianized \'etale fundamental group of the curve $\spzb$ localized at $p$. Thus, the universal geometry over $p$ encodes the linking of $p$ with all other primes $\ell$.

The following commutative diagram of pullbacks helps to visualize the overall construction
\begin{equation}\label{3floora}
\begin{tikzcd}
\widetilde{\spz} \arrow[r,"\tilde\Theta"]\arrow[d] & Y_\Q\arrow[d]
\\
\mathcal{X}_\Q \arrow[r] \arrow[d] & X_\Q \arrow[d] \\
\spzb \arrow[r, "\Theta"] & \Jac(\spzb)
\end{tikzcd}
\end{equation}

The spaces $\widetilde{\spz}$ and $\mathcal{X}_\Q$ in the left column, are  defined as pullbacks of adelic quotients. 

\begin{thm}\label{thm:fibers}
The monoid structure of the adelic covers  $X_\Q$ and $Y_\Q$ induces a canonical group structure on the fibers of the pullbacks of these covers along the arithmetic Abel-Jacobi maps $\Theta$ and $\tilde\Theta$. The structure of these fibers for the intermediate cover $\mathcal{X}_\Q$  and the universal cover $\widetilde{\spz}$ is given as follows:
\begin{center}
\renewcommand{\arraystretch}{1.5}
\begin{tabular}{c|c|c}
$x \in \spzb$ & \rm{Fiber in} $\mathcal{X}_\Q$ & \rm{Fiber in} $\widetilde{\spz}$ \\
\hline
$\eta$ & $\R_+^\times$ & $C_\Q$  \\
$p<\infty$ & $C_p \cong \R / \Z \log p$ & $C_\Q / \Q_p^\times$ \\
$\infty$ & $\{1\}$ & $\widehat{\Z}^\times \cong \operatorname{Gal}(\Q^{\text{ab}}/\Q)$ \\
\end{tabular}
\end{center}
\end{thm}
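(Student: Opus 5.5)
The proof amounts to computing, for each $x\in\spzb$, the fibre of $X_\Q\to\Jac(\spzb)$ and of $Y_\Q\to\Jac(\spzb)$ over $\Theta(x)$. In each case I will identify this fibre with a coset of a stabilizer subgroup, so that the group structure is the one induced by the monoid product.

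\textbf{Setting up the fibres.} By Theorem~\ref{xqthm}, a point of $X_\Q$ is an adele class $a=(a_f,a_\infty)$ modulo $\Q^\times$ and $\znt$. Its image in $\Jac(\spzb)=\Pic(\spzb)/\R_+^\times$ records two things: the class of $L_a$, and whether $a_\infty=0$. Indeed, $\R_+^\times$ acts transitively on the nonzero semi-norms of a rank-one real space.

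\textbf{Identifying the adeles in each fibre.} For $x=\eta$ we need $L_a\cong\Z$ and $a_\infty\neq 0$. Modulo $\Q^\times$ this forces $a_f\in\znt$, since $L_a=\Z$ exactly when every $v_\ell(a_f)=0$. For $x=p$ we need $L_a\cong\Z[1/p]$, which modulo $\Q^\times$ means $a_f\in \Q_p\times\prod_{\ell\neq p}\Z_\ell^\times$ with $a_p=0$, together with $a_\infty\neq0$. For $x=\infty$ we need $a_f\in\znt$ and $a_\infty=0$.

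\textbf{Key observation.} Each fibre is an orbit of the idele class group $\CQ$ acting by multiplication on the idempotent adele $e_x$. Here $e_\eta=1$, $e_p$ has component $0$ at $p$ and $1$ elsewhere, and $e_\infty$ has component $0$ at $\infty$ and $1$ elsewhere. The orbit $\CQ\cdot e_x$ is identified with $\CQ/\mathrm{Stab}(e_x)$. Multiplication of adeles then restricts to a group law on the orbit with $e_x$ as unit: for $u,v\in\CQ$ we have $(ue_x)(ve_x)=uve_x$ because $e_x^2=e_x$. This gives the canonical group structure. The stabilizers are as follows:
\begin{itemize}
\item for $\eta$ the stabilizer is trivial;
\item for $p$ it is the image of $\Q_p^\times$ (ideles equal to $1$ away from $p$); here one must check that no nontrivial rational number acts trivially off $p$, and indeed $q\in\Q^\times$ with $q=1$ at all $\ell\neq p$ and at $\infty$ is $1$;
\item for $\infty$ it is the image of $\R^\times$.
\end{itemize}

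\textbf{Computing the fibres.} In $Y_\Q$ the fibres are therefore $\CQ$, $\CQ/\Q_p^\times$ and $\CQ/\R^\times$. The last is $\CQ/\R^\times\cong \Q^\times\backslash\A_f^\times/\Q_{>0}$-type quotient $\cong\znt$, which is $\operatorname{Gal}(\Q^{ab}/\Q)$ by class field theory. Passing to $X_\Q$ quotients further by $\znt$, giving:
\begin{itemize}
\item $\CQ/\znt\cong\R_+^\times$ over $\eta$;
\item $\CQ/(\znt\Q_p^\times)\cong \R_+^\times/p^\Z=C_p$ over $p$, using $\CQ\cong\znt\times\R_+^\times$ and the fact that $p\in\Q_p^\times$ corresponds to the idele class $(p^{-1}$ off $p)$, whose modulus is $p$;
\item the trivial group over $\infty$.
\end{itemize}
Finally, the pullbacks $\mathcal X_\Q$ and $\widetilde{\spz}$ have, by definition, these fibres over $\Theta(x)$ and $\tilde\Theta$ respectively.

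The main obstacle is the careful verification that each fibre is exactly one $\CQ$-orbit, and that the stabilizer computations survive the quotient by $\Q^\times$. The delicate case is the prime $p$. There, elements of $\Q^\times$ such as powers of $p$ act nontrivially on the factors $\Z_\ell^\times$, and this is what produces the Frobenius mapping torus $\mathcal T_p$. I would handle it by writing $\CQ/\Q_p^\times\cong(\prod_{\ell\neq p}\Q_\ell^\times\times\R_+^\times)/p^\Z$ explicitly, reducing modulo the units to obtain the description of $\mathcal T_p$.
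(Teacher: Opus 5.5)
Your approach for the $\widetilde{\spz}$ column is the paper's: the surjection $u\mapsto[ue_x]$ from ideles onto the fibre, the group law coming from $e_x^2=e_x$, and a kernel computation. For the $\mathcal X_\Q$ column you divide these fibres by $\znt$, whereas the paper uses the reconstruction theorem (fibres as subgroups $\lambda\Z[1/p]\subset\R$, kernel $\{\pm p^k\}$). Both work, and your entries over $\eta$ and $p$ are correct.

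The step that fails is $\CQ/\R^\times\cong\znt$ at $x=\infty$. Your stabilizer is right: it is the image of $\R^\times$. But under $\CQ\cong\znt\times\R_+^\times$ the idele $(1_f,-1)$ is congruent modulo $\Q^\times$ to $(-1_f,1)$. So this image is $\{\pm1\}\times\R_+^\times$, and $\CQ/\R^\times\cong\Q^\times\backslash\A_f^\times\cong\znt/\{\pm1\}$. The sign $-1\in\Q^\times$ acts nontrivially on $\A_f^\times$ and is not absorbed by $\Q_{>0}^\times$.

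You can see this directly. The fibre of $\YQ$ over $(\Z,0)$ consists of the classes $[(u,0)]$ with $u\in\znt$. Since $(-1)\cdot(u,0)=(-u,0)$, we get $[(u,0)]=[(-u,0)]$. Conversely, $[(u,0)]=[(u',0)]$ forces $u'\in(\Q^\times\cap\znt)\,u=\{\pm u\}$. In framed language, $(\Z,\xi,0)\cong(\Z,-\xi,0)$ via the automorphism $-1$ of $\Z$, which a zero infinite frame cannot detect. Over $\eta$ and $p$ this does not happen, because $e_x$ has archimedean component $1$. So the orbit map $\znt\to\mathcal F_\infty$ has kernel $\{\pm1\}$. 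Via the Artin map, the fibre is canonically $\operatorname{Gal}(\Q^{\text{ab}}/\Q)$ modulo complex conjugation, i.e.\ $\operatorname{Gal}((\Q^{\text{ab}}\cap\R)/\Q)$.

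The paper's own proof contains the same slip. It finds the kernel to be the image of $\R^\times$, then divides only by $\R_+^\times$ and $\Q_+^\times$, and concludes $\A_f^\times/\Q_+^\times\cong\znt$. Hence the $\infty$ entry of the $\widetilde{\spz}$ column cannot be established in the stated canonical form. Once this step is corrected, your argument proves $\znt/\{\pm1\}$ there. The other five entries, including $\{1\}$ for $\mathcal X_\Q$ at $\infty$, are unaffected. As abstract profinite groups, $\znt/\{\pm1\}$ and $\znt$ happen to be isomorphic. However, no such isomorphism is compatible with the orbit map or with the Artin map, so this does not rescue the identification you wrote.
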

In Section \ref{sectgeompic} we establish a rigorous geometric realization of the adelic monoid $Y_\Q = \Q^\times \backslash \A$, transforming it from a purely adelic quotient into a moduli space of arithmetic objects equipped with \emph{rigidifying data}. Our construction proceeds in two stages, revealing the ``combinatorial skeleton'' hidden within the adelic structure.

First, we introduce the notion of \emph{framed arithmetic divisors}. These are torsion free rank-1 groups $L$ equipped with trivializations of their completions at both finite and infinite places: that is, a generator $\xi$ of the $\Zhat$-module $\operatorname{Hom}(L, \Zhat)$ and a homomorphism $\tau: L \to \R$. This formulation places the archimedean and non-archimedean data on an equal footing and allows one to characterize the rigidity of the frame.

Second, by applying Pontryagin duality to the finite frame, we pass to the dual description of \emph{rooted arithmetic divisors}. Here, the profinite data is replaced by a \emph{root} \ie a homomorphism $\rho$ from the group of roots of unity $\Q/\Z$ to the torsion of the dual group $L^\vee = \operatorname{Hom}(L, \Q/\Z)$. This root map encodes the arithmetic position of the divisor by specifying how the standard roots of unity map into the ``arithmetic roots'' of $L$. We show that the multiplicative structure of the adeles corresponds geometrically to the tensor product of these divisors. A key technical innovation is  the definition of the tensor product of roots, which relies on a ``hidden'' coproduct structure on $\Q/\Z$ arising from the limit of the ring structures of the levels $\Z/n\Z$.\vspace{.05in}

We denote by $\MFr(\spzb)$  the \emph{moduli space of framed arithmetic divisors} $(L, \xi, \tau)$ modulo isomorphism, and by $\MRt(\spzb)$  the \emph{moduli space of rooted arithmetic divisors} $(L, \rho, \tau)$ modulo isomorphism.\vspace{.03in}

The central result arising from this analysis is the following uniformization theorem (Theorem \ref{unif})

\begin{thm}[Geometric uniformization of the adelic monoid $Y_\Q$]
\,
\begin{enumerate}
    \item There is a canonical monoid isomorphism between the adelic monoid $Y_\Q$ and the moduli space of framed arithmetic divisors:
    \[
    \mathcal{F}: Y_\Q \xrightarrow{\sim} \MFr(\spzb), \quad [a] \longmapsto [(L_a, \xi_a, \tau_a)].
    \]
    Under this isomorphism, the product of adeles corresponds to the tensor product of framed divisors:
    \[
    \mathcal{F}(ab) \cong \mathcal{F}(a) \otimes \mathcal{F}(b).
    \]
    
    \item The duality functor $\xi\mapsto \xi^\vee$ induces a canonical isomorphism of monoids between framed and rooted arithmetic divisors:
    \[
    D: \MFr(\spzb) \xrightarrow{\sim} \MRt(\spzb), \quad (L, \xi, \tau) \longmapsto (L, \xi^\vee, \tau).
    \]
    This map respects the tensor structure, where the product of roots is defined via the diagonal action on the system of finite rings $\Z/n\Z$.
\end{enumerate}
\end{thm}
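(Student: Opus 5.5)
The plan is to build the map $\mathcal F$ explicitly, show it is well defined on $Y_\Q=\Q^\times\backslash\A$, and then obtain bijectivity and multiplicativity by a direct comparison with rank-one subgroups of $\Q$. Part 2 is then deduced from part 1 by Pontryagin duality. For an adele $a=(a_f,a_\infty)$ we put $L_a=\{q\in\Q\mid a_fq\in\Zhat\}$, exactly as in Theorem~\ref{xqthm}. The frame is $\xi_a\in\operatorname{Hom}(L_a,\Zhat)$, given by $\xi_a(q)=a_fq$, and $\tau_a:L_a\to\R$ is given by $\tau_a(q)=a_\infty q$.

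\textbf{Well-definedness.} We must first check that $\xi_a$ generates the $\Zhat$-module $\operatorname{Hom}(L_a,\Zhat)$. This works prime by prime. Write $L_a\otimes\Z_p$: it is $\Z_p$ if $v_p(a_p)<\infty$, and otherwise $\operatorname{Hom}(L_a,\Z_p)=0$. In either case $\operatorname{Hom}(L_a,\Z_p)$ is cyclic, generated by $q\mapsto a_pq$. Next we check the effect of replacing $a$ by $ra$ with $r\in\Q^\times$. This gives $L_{ra}=r^{-1}L_a$, and multiplication by $r$ is an isomorphism $L_{ra}\to L_a$ carrying $(\xi_{ra},\tau_{ra})$ to $(\xi_a,\tau_a)$. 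So $\mathcal F$ descends to $Y_\Q$.

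\textbf{Injectivity and surjectivity.} An isomorphism $L_a\cong L_b$ compatible with frames is a rational scaling $q\mapsto rq$, since every rank-one group in $\Q$ has automorphisms only by $\Q^\times$ scalars. Compatibility of the frames then forces $b_f=ra_f$ on $L$. Here the main subtlety arises. $L$ detects $a_p$ only through pairing with $L$, and when $L\otimes\Z_p=0$ the $p$-component is zero anyway. So we argue componentwise: $\xi$ determines $a_p=\xi(q)_p/q$ for any $q\neq0$ in $L$, hence $a_f$ is determined. Likewise $\tau$ determines $a_\infty$. For surjectivity, given $(L,\xi,\tau)$ we choose $q_0\in L$ nonzero, embed $L\subset\Q$ via $q_0\mapsto1$, and set $a_p=\xi(q_0)_p$ and $a_\infty=\tau(q_0)$. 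We must then verify that $L=L_a$. The generator condition on $\xi$ forces $v_p(a_p)=-\inf v_p(L)$, with the value $\infty$ exactly when $L\otimes\Z_p=0$. This matches the divisor description of Proposition~\ref{rankpro}.

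\textbf{Multiplicativity.} We use $L_a\otimes L_b\cong L_{ab}$, via $q\otimes q'\mapsto qq'$, from the divisor addition formula. The tensor frame $\xi_a\otimes\xi_b$ is given by $q\otimes q'\mapsto a_fq\,b_fq'$, and the tensor $\tau$ behaves in the same way. Both sides agree with $\xi_{ab}$ and $\tau_{ab}$.

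\textbf{Part 2.} We dualize, using $\operatorname{Hom}(\Q/\Z,\Q/\Z)=\Zhat$ and $\operatorname{Hom}(L,\Zhat)\cong\operatorname{Hom}(\Q/\Z,L^\vee_{\rm tors})$ by adjunction. Here $\xi\mapsto\xi^\vee=\rho$ sends $\zeta$ to $(x\mapsto\zeta\circ\xi(x))$. This is a bijection on objects and on isomorphisms, so $D$ is bijective. For compatibility with tensor products we must show that the product of roots, defined through $\Z/n\Z$, is the dual of $\xi\otimes\xi'$. At level $n$, $\rho$ corresponds to $\xi\bmod n:L\to\Z/n$, and the ring multiplication $\Z/n\otimes\Z/n\to\Z/n$ dualizes to the coproduct on $\frac1n\Z/\Z$. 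Passing to the limit over $n$ gives the statement.

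I expect the main obstacle to be the surjectivity and generator step at primes where $L$ is $p$-divisible, together with making the ``hidden coproduct'' on $\Q/\Z$ precise and compatible with the transition maps.
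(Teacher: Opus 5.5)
Your Part 1 follows the paper's argument closely: descent under $\Q^\times$, recovering $a_f=\xi(q)/q$ and $a_\infty=\tau(q)/q$, the generator condition forcing $v_p(a_p)=n_p$ as in Lemma~\ref{tight}, and $L_a\otimes L_b\cong L_{ab}$ for multiplicativity. There is one slip. At a prime where $L$ is $p$-divisible it is $\operatorname{Hom}(L,\Z_p)$ that vanishes, while $L\otimes\Z_p\cong\Q_p$, which is never $0$. Your arguments only use the former, so nothing breaks.

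The genuine gap is in Part 2. The tensor--hom adjunction $\operatorname{Hom}(L,\Zhat)=\operatorname{Hom}(L,\operatorname{End}(\Q/\Z))\cong\operatorname{Hom}(\Q/\Z,L^\vee)=\operatorname{Hom}(\Q/\Z,\operatorname{Tor}(L^\vee))$ is a clean substitute for the paper's detour through $L^*$ and Proposition~\ref{dualities}. However, it identifies \emph{all} homomorphisms $\xi$ with \emph{all} homomorphisms $\rho$. By contrast, $\MFr(\spzb)$ is built only from frames, that is, generators of $\operatorname{Hom}(L,\Zhat)$. Likewise $\MRt(\spzb)$ is built only from roots, that is, maps killing $\bigoplus_{p\in S}\Q_p/\Z_p$ and restricting to an isomorphism $\mu_{(S)}\to\operatorname{Tor}(L^\vee)$.

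Your assertion that $D$ is ``a bijection on objects'' therefore presupposes the key point: $\xi$ generates $\operatorname{Hom}(L,\Zhat)$ if and only if $\xi^\vee$ is a root. Without it, $D$ is not known to land in $\MRt(\spzb)$, nor to be onto it. Your level-$n$ computation shows only that the product of two roots equals $(\xi_1\otimes\xi_2)^\vee$, not that it is again a root. This is exactly Proposition~\ref{isodual}, on which the paper's proof of Theorem~\ref{unif} rests.

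To close the gap you first need the structure of $\operatorname{Tor}(L^\vee)$. You can take Proposition~\ref{proptor}, $\operatorname{Tor}(L^\vee)=\bigoplus_{p\notin S}\Q_p/a_p\Z_p$. Alternatively, compute it from $\operatorname{Tor}(L^\vee)=\varinjlim_n\operatorname{Hom}(L,\Z/n\Z)$, using $L/p^kL\cong\Z/p^k\Z$ for $p\notin S$ and $L/p^kL=0$ for $p\in S$. Then the vanishing condition is automatic, since the target has no $p$-torsion for $p\in S$. The isomorphism condition is checked prime by prime: $\xi^\vee_p\colon\Q_p/\Z_p\to\Q_p/a_p\Z_p$ is multiplication by the local multiplier $\alpha_p$ of $\xi$. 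It is bijective if and only if $v_p(\alpha_p)=v_p(a_p)=n_p$, which by Lemma~\ref{tight} is the generator condition.

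Inside your own framework there is a shortcut once $\operatorname{Tor}(L^\vee)\cong\mu_{(S)}$ is known. The adjunction is $\Zhat$-linear, so it matches generators with generators. The generators of $\operatorname{Hom}(\Q/\Z,\mu_{(S)})\cong\prod_{p\notin S}\Z_p$ are precisely the maps whose restriction to $\mu_{(S)}$ is an automorphism. With this in place, your tensor compatibility (reduction $\Zhat\to\Z/n\Z$ is a ring map) agrees with Lemma~\ref{dualtensor}.
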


Consequently, the adelic monoid $Y_\Q$ classifies rooted arithmetic divisors, identifying the multiplication of adelic classes with the tensor product of systems of roots.

The duality isomorphism $D$ justifies the introduction of the following notation
\begin{equation}\label{}
\tspz := \MFr(\spzb) \cong \MRt(\spzb)
\end{equation}

In Section \ref{Wtrace}, we draw a parallel with classical algebraic geometry, where the Jacobian and Picard varieties provide the intrinsic representation of cohomology. We re-interpret the diagram \eqref{3floora} as the key pullback diagram  in the construction of \'etale covers in algebraic geometry
\begin{equation}\label{}
\begin{tikzcd}
\widetilde{\spz} \arrow[r,"\tilde\Theta"]\arrow[d,"\tilde\pi"'] & \tspz\arrow[d,"\pi"]
 \\
\spzb \arrow[r, "\Theta"] & \Jac(\spzb)
\end{tikzcd}
\end{equation}

We investigate  in the arithmetic context over $\spzb$, the role of the  monoid $\tspz$  as a geometric support for the cohomology of the arithmetic curve. The nature of this cohomology is revealed by the spectral realization of the zeros of $L$-functions \cite{Co-zeta}. In \S \ref{sect8.1} we interpret  Weil's explicit formula as a Lefschetz trace formula for the translation action of the idele class group $C_\Q$ on $\tspz$, demonstrating that the geometric contributions of the periodic orbits localize precisely on the range of the universal Abel-Jacobi map $\tilde\Theta$. This marks a decisive departure from the classical function field analogy. There, the spectral theory arises from the action of the Frobenius endomorphism on the Jacobian. Here,  we rely on the action of the idèle class group  acting by \emph{translations} as the Galois group of a covering. While translations on a  group possess no non-trivial fixed points, the adelic monoid $\tspz$ carries a geometric structure where the action of $C_\Q$ by translation admits fixed points, recovering the local terms of the explicit formula.

In \S \ref{geomtrace} we show that the geometric side of this trace formula decomposes according to the fibers of the projection $\tilde\pi: \widetilde \spz \to \spzb$ described above. Precisely:\vspace{.03in}

    The fibers over the closed points (places) $v$ ($p<\infty$ and $\infty$) of $\spzb$ carry the local terms of the explicit formula. The local multiplicative groups $\Q_v^\times$ appear as isotropy subgroups of the action, while the term $\frac{1}{|1-u|}$ in the generalized Weil's explicit formula  arises as the distributional trace of the scaling operator on the transverse space to the orbit.
    
    The fiber over the generic point $\eta\in \spzb$ contributes to the spectral side via the regular representation, manifesting as the "white light" divergence in the trace.

To provide a rigorous formulation, we follow  \cite{Co-zeta} and develop in \S \ref{sectsemi} a semi-local trace formula.  In \S \ref{sectshespec} we introduce a sheaf of Schwartz algebras on $\spz$ associated to finite products of local fields. The  semi-local trace formula of \cite{Co-zeta} involves phase-space cutoffs $R_\lambda$ acting on the associated Hilbert spaces, and  contains a divergent term $2h(1)\log \lambda$  (see \eqref{semilocTrace})  while the finite remainder terms recover exactly the local integrals in Weil's explicit formula. We interpret the divergent term as the contribution of the image $\Theta(\eta)=\tilde \eta$ of the generic point $\eta\in \spzb$ by the Abel-Jacobi map. This suggests that the relevant cohomology is that of the pair $(\tspz,\tilde\eta)$.\vspace{.05in}

Finally, we emphasize that the passage from the classical geometry of Picard \emph{groups} to that of \emph{Picard monoids} (and likewise for Jacobians) should not be viewed as a mere generalization. Rather, it is a necessary step for incorporating boundary and singular phenomena into the geometric framework—precisely the features required for the spectral realization, including the ``white light’’ and the distributional trace. This perspective is also consistent with developments in modern geometric representation theory and the Langlands program, where \emph{Vinberg monoids} provide a natural and well-established precedent (see \cite{Vinberg, Ngo}).
\vspace{.08in}

In Appendix \ref{appendual}, we give a self-contained analysis of the character dual $L^\vee = \operatorname{Hom}(L, \Q/\Z)$ for torsion-free rank-1 groups. We then derive an explicit adelic presentation of this group as the quotient $\A_f / L^\perp$, where $L^\perp$ denotes the annihilator of $L$ in the ring of finite rational adeles. A key result identifies the torsion subgroup $\rm{Tor}(L^\vee)$ as the direct sum of the local components $\Q_p/\Z_p$ over the primes where $L$ is integral, regardless of the group's behavior at singular primes. Finally, we clarify the relationship between $L^\vee$ and the classical Pontryagin dual $L^* = \operatorname{Hom}(L, S^1)$, demonstrating that $L^\vee$ contains the essential torsion information required to define the system of roots.

Appendix \ref{appB} develops the identification of $\Pic(\spz)$   with the monoid of the points of the arithmetic topos $\nat$.


\section{Arithmetic divisors on $\spz$}\label{gendiv}

This section develops the divisor theory of $\spz$ starting from supernatural numbers
and valuation bounds. We define arithmetic divisors and their spaces of global
sections $\mathcal L(D)\subset\Q$, obtaining a classification of rank-$1$ subgroups
of $\Q$ and, modulo linear equivalence, an identification of $\Pic(\spz)$ with
isomorphism classes of torsion-free rank-$1$ groups. We then describe the induced
tensor/monoid structure, give an adelic realization of $\Pic(\spz)$, and introduce an
arithmetic Abel--Jacobi map landing on a boundary (idempotent) stratum.

The structural results regarding rank-1 torsion-free abelian groups presented in this section are standard in the literature; specifically, the classification via ``types'' (which corresponds to our divisor classes) is treated in Rotman \cite[Theorems 10.47 and 10.48]{Rotman}. However, to establish our specific geometric terminology and for the convenience of the reader, we provide self-contained proofs adapted to the language of arithmetic divisors.

\subsection{Supernatural numbers}

To motivate the definition of arithmetic divisors on $\spz$, we first recall the
notion of supernatural numbers.
These encode valuation bounds at all primes simultaneously and provide a natural
language for describing divisor-like data with infinite support but finite negative
part.

\begin{defn}
The set of supernatural numbers $\N^{sup}$ consists of formal products:
\[
N = \prod_{p} p^{n_p}
\]
where the exponents satisfy $n_p \in \N \cup \{+\infty\}$.
  \end{defn}
To every supernatural number $N \in \N^{sup}$, we associate a subgroup $H_N \subset \Q$:
\[
H_N = \{ x \in \Q \mid v_p(x) \ge -n_p \text{ for all primes } p \}
\]
Here, $v_p(x)$ is the standard $p$-adic valuation. 
The condition $v_p(x) \ge -n_p$ can be interpreted geometrically as follows:
\begin{itemize}
    \item[] If $n_p > 0$, elements in $H_N$ may have poles of order at most $n_p$ at $p$ (\ie denominators divisible at most by $p^{n_p}$).
       \item[] If $n_p = +\infty$, elements in $H_N$ may have poles of arbitrary order at $p$.
\end{itemize}

The next statement explains why supernatural numbers are the natural parameters in this setting.
They are equivalent to prescribing, prime by prime, the denominators that are allowed in a subgroup
of $\Q$ containing $\Z$  
(cf. \cite{Serre}, Chap. XIII, \S 2).

\begin{prop}\label{rankpro}
The map $N \mapsto H_N$ defines a bijection between $\N^{sup}$ and the set of all  subgroups of $\Q$ containing $\Z$.
\end{prop}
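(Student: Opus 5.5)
The plan is to construct an explicit inverse and check both composites. First, $H_N$ is a subgroup of $\Q$ containing $\Z$: $v_p(x\pm y)\ge \min(v_p(x),v_p(y))$, $v_p(0)=+\infty$, and every integer has $v_p\ge 0\ge -n_p$. For a subgroup $H$ with $\Z\subset H\subset\Q$, define $N(H)=\prod_p p^{n_p(H)}$, where
\[
n_p(H)=\sup\{-v_p(x)\mid x\in H\setminus\{0\}\}\in \N\cup\{+\infty\}.
\]
This lies in $\N\cup\{+\infty\}$ because $1\in H$ gives $n_p(H)\ge 0$. The proof then reduces to showing $N(H_N)=N$ and $H_{N(H)}=H$.

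For $N(H_N)=N$, I would use the elements $p^{-k}\in H_N$ for every integer $0\le k\le n_p$, since these have $v_q(p^{-k})=0$ at $q\neq p$. This gives $n_p(H_N)\ge n_p$, and the definition of $H_N$ gives the reverse inequality.

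For $H_{N(H)}=H$, the inclusion $H\subset H_{N(H)}$ is immediate from the definition of $n_p(H)$. The reverse inclusion is the main step. Take $x=a/b\in H_{N(H)}$ in lowest terms with $b>0$, and write $b=\prod p^{e_p}$, so that $e_p\le n_p(H)$ for each $p$ dividing $b$.

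First, for each such $p$, I show $p^{-e_p}\in H$. By the definition of the supremum there is some $y=c/d\in H$ in lowest terms with $v_p(d)=f\ge e_p$. Write $d=p^f d'$ with $p\nmid d'$. Then $d'y=c/p^f\in H$, and $p\nmid c$. Choose $u\in\Z$ with $uc\equiv 1 \pmod{p^f}$. Then $u c/p^f - (uc-1)/p^f = 1/p^f$, where the second term is an integer and hence in $H$, so $1/p^f\in H$. Multiplying by $p^{f-e_p}$ gives $p^{-e_p}\in H$.

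Second, I pass to general denominators using Bézout (CRT). The integers $b/p^{e_p}$ for $p\mid b$ have gcd $1$, so there are integers $u_p$ with $\sum_p u_p\, b/p^{e_p}=1$. Dividing by $b$ gives $1/b=\sum_p u_p\,p^{-e_p}\in H$. Therefore $x=a\cdot(1/b)\in H$.

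This also handles primes with $n_p=+\infty$ without extra work, since only finitely many primes divide $b$ and each $e_p$ is finite. I expect the only real care to lie in this coprimality step. The rest is bookkeeping.
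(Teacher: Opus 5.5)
Your proof is correct and follows essentially the same route as the paper: define $n_p(H)$ as the supremum of pole orders, note $H\subseteq H_{N(H)}$ immediately, and for the reverse inclusion produce elements of $H$ with exact denominator $p^{e_p}$ at each $p\mid b$ and glue them by the Chinese remainder theorem (B\'ezout), using $\Z\subset H$ to absorb integers. The differences are cosmetic: you normalize to $p^{-e_p}$ and build $1/b$ before multiplying by $a$, whereas the paper solves a congruence to hit $a/b$ modulo $\Z$ directly. You also check $N(H_N)=N$ explicitly, which the paper leaves implicit; the case $b=1$, where your B\'ezout sum is empty, is immediate since $x\in\Z\subset H$.
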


\begin{proof}
For $N=\prod_p p^{n_p}$ the set
\[
H_N=\{x\in\Q\mid v_p(x)\ge -n_p\ \forall p\}
\]
is clearly an additive subgroup of $\Q$ and contains $\Z$, so the map is well-defined.

Conversely, let $H\subset\Q$ be a subgroup with $\Z\subset H$. For each prime $p$ set
\[
n_p(H)=\sup\{-v_p(x)\mid x\in H\}\in \N\cup\{+\infty\},
\quad
N(H)=\prod_p p^{n_p(H)}\in \N^{\sup}.
\]
Then $H\subseteq H_{N(H)}$ is immediate from the definition of $n_p(H)$.

For the reverse inclusion, take $x\in H_{N(H)}$ and write $x=a/b$ in lowest terms, with
$b=\prod_p p^{k_p}$ and $k_p=-v_p(x)$. The condition $x\in H_{N(H)}$ means $k_p\le n_p(H)$ for
all $p\mid b$. For each such $p$ choose $x_p\in H$ with $-v_p(x_p)\ge k_p$; multiplying by a suitable
integer (allowed since $\Z\subset H$) we may assume $x_p=u_p/p^{k_p}$ with $u_p\in\Z$ and $p\nmid u_p$.
Let $S$ be the finite set of primes dividing $b$, and put $b_S:=\prod_{p\in S}p^{k_p}=b$.

Choose integers $c_p$ such that
\[
\sum_{p\in S} c_p\,u_p\,\frac{b}{p^{k_p}}\equiv a \pmod{b}.
\]
This is possible because each factor $\frac{b}{p^{k_p}}$ is invertible modulo $p^{k_p}$, hence one
can solve the congruences modulo each $p^{k_p}$ and combine them by the Chinese remainder theorem.
Then, define
\[
y=\sum_{p\in S} c_p\,x_p \in H.
\]
By construction, $y\equiv a/b \pmod{\Z}$, \ie $y-\frac{a}{b}\in\Z\subset H$, hence $x=a/b\in H$.
Therefore $H_{N(H)}\subseteq H$, so $H=H_{N(H)}$.

Finally, the assignments $N\mapsto H_N$ and $H\mapsto N(H)$ are inverse to each other by construction,
so the correspondence is bijective.
\end{proof}

\subsection{Arithmetic divisors on $\spz$}

We begin by extending the classical notion of a divisor to accommodate the "infinite genus" nature of $\spz$.

\begin{defn}\label{gendiv}
An arithmetic divisor on $\spz$ is a formal sum:
\[
D = \sum_{p} n_p [p]
\]
where the coefficients $n_p$ belong to $\Z \cup \{+\infty\}$ and satisfy the condition that $n_p < 0$ for at most finitely many rational primes $p$.
\end{defn}

We denote by $\operatorname{Div}(\spz)$ the set of arithmetic divisors. An arithmetic divisor is called \emph{effective} if $n_p \ge 0$ for all $p$. \vspace{.03in}

The set of effective arithmetic divisors is naturally identified with the set of
supernatural numbers $\N^{\sup}$, via the correspondence
\[
\prod_p p^{n_p}\ \longleftrightarrow\ \sum_p n_p [p],
\]
which amounts to passing from the multiplicative to the additive encoding of
valuation data. This identification reflects the fact that both notions encode, for each prime $p$,
the maximal allowed order of a pole, with no finiteness restriction on the positive
part.\vspace{.03in}

To every arithmetic divisor $D$, we associate the module of global sections of the corresponding line bundle. This is the subgroup $\mathcal L(D) \subset \Q$ defined by:
\[
\mathcal L(D) = \{ x \in \Q \mid v_p(x) \ge -n_p \text{ for all primes } p \}.
\]
Here, $v_p(x)$ is the standard $p$-adic valuation. The condition $v_p(x) \ge -n_p$ has a clear geometric interpretation:
\begin{itemize}[leftmargin=*, labelindent=0pt]
    \item[] If $n_p > 0$ (finite), sections in $\mathcal L(D)$ are allowed to have poles of order at most $n_p$ at $p$.
    \item[] If $n_p = +\infty$, the sections are allowed to have poles of arbitrary order at $p$ (essential singularity).
    \item[] If $n_p < 0$, the sections must have a zero of order at least $|n_p|$ at $p$.
\end{itemize}

As in the classical theory of divisors on curves, a divisor determines the space of
global sections of the associated line bundle.
In the present arithmetic setting, this assignment completely characterizes
rank-1 subgroups of $\Q$, as stated in the following proposition.

\begin{prop}\label{rankpro}
The map $D \mapsto \mathcal L(D)$ defines a bijection between the set of generalized divisors $\operatorname{Div}(\spz)$ and the set of rank-1 subgroups of $\Q$.
\end{prop}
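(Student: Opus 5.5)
The plan is to reduce the statement to the first Proposition~\ref{rankpro} on supernatural numbers by an integer rescaling. Here "rank-1 subgroup" means a nonzero subgroup of $\Q$; every such subgroup has rank $1$. There are three steps: check that $\mathcal L(D)$ is a rank-1 subgroup, build an inverse $H\mapsto D(H)$, and verify that the two maps are mutually inverse.

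\emph{Step 1 (well-definedness).} Let $D=\sum n_p[p]$ and let $S$ be the finite set of primes with $n_p<0$. Put $m=\prod_{p\in S}p^{-n_p}\in\N^\times$. By construction $m\in\mathcal L(D)$, so $\mathcal L(D)\neq 0$. The ultrametric inequality $v_p(x+y)\ge\min(v_p(x),v_p(y))$ shows that $\mathcal L(D)$ is a subgroup. Moreover, for every prime $p$,
\[
v_p(x)\ge -n_p \iff v_p(x/m)\ge -n'_p,
\]
where $n'_p=n_p+v_p(m)\ge 0$. Hence $\mathcal L(D)=m\,H_N$ with $N=\prod p^{n'_p}\in\N^{\sup}$, and this set contains $m\Z$.

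\emph{Step 2 (inverse map).} Given a nonzero subgroup $H\subset\Q$, set
\[
n_p(H)=\sup\{-v_p(x)\mid x\in H,\ x\neq 0\}\in\Z\cup\{+\infty\}.
\]
This supremum is never $-\infty$ because $H\neq 0$. To see that $n_p(H)<0$ for only finitely many $p$, pick any $0\neq h\in H$. Then $n_p(H)\ge -v_p(h)$, and $v_p(h)\le 0$ for all $p$ not dividing the numerator of $h$. So $D(H)=\sum n_p(H)[p]$ lies in $\Div(\spz)$.

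\emph{Step 3 (bijectivity).} The inclusion $H\subseteq\mathcal L(D(H))$ is immediate. For the reverse inclusion, choose $0\neq h\in H$ and write $h=r/s$. Since $H$ is a group, $r=s\cdot h\in H$, so $r\Z\subset H$. Therefore $H':=r^{-1}H$ is a subgroup of $\Q$ containing $\Z$. Valuations shift under scaling, so $D(H')=D(H)-\mathrm{div}(r)$ and $\mathcal L(D-\mathrm{div}(r))=r^{-1}\mathcal L(D)$. The first Proposition~\ref{rankpro} gives $H'=H_{N(H')}=\mathcal L(D(H'))$, and multiplying by $r$ yields $H=\mathcal L(D(H))$.

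It remains to show $D(\mathcal L(D))=D$, that is, for each $p$ the bound $v_p(x)\ge -n_p$ is attained. When $n_p$ is finite, I would construct $x\in\mathcal L(D)$ with $v_p(x)=-n_p$ exactly: take $x=p^{-n_p}\prod_{q\in S\setminus\{p\}}q^{-n_q}$. Every factor has $v_q(x)\ge -n_q$, since $n_q\ge 0$ for $q\notin S$. When $n_p=+\infty$, the same construction with $p^{-k}$ in place of $p^{-n_p}$ works for every $k$. Together with Step 2 this makes the two maps inverse to each other.

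The main obstacle is only bookkeeping: the rescaling in Step 3 must stay compatible with the divisor coefficients, including the value $+\infty$, and the finiteness condition on negative coefficients must be preserved. The real content is the CRT argument already done in the earlier proposition.
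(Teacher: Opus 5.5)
Your argument is correct. Its skeleton is the paper's: the inverse map $H\mapsto D(H)$ via $n_p=-\inf v_p$, finiteness of the negative part, and mutual inverses. The difference is that you prove the two points the paper passes over.

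\begin{itemize}
\item The paper asserts that $x\in\mathcal L(D_H)\iff x\in H$ holds ``by construction''. In fact the inclusion $\mathcal L(D(H))\subseteq H$ requires gluing local conditions at several primes, i.e.\ the Chinese-remainder argument. You obtain it from the supernatural-number proposition by rescaling with a nonzero integer $r\in H$, so that $r^{-1}H\supseteq\Z$.
\item The paper declares the correspondence bijective without checking $D(\mathcal L(D))=D$. You get this injectivity by exhibiting elements of $\mathcal L(D)$ with $v_p=-n_p$, or with arbitrarily negative $v_p$ when $n_p=+\infty$.
\end{itemize}

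Your Step~2, namely $n_p(H)\ge -v_p(h)\ge 0$ for $p$ not dividing the numerator of $h$, is also the right form of the paper's finiteness argument. The paper's wording does not literally hold: ``bounded below'' fails for $\Z[1/p]$, and ``$v_p(q)\ge 0$ implies $v_p(x)\ge 0$ for all $x\in H$'' is false. Your route thus gives a complete proof at the cost of depending on the earlier proposition. The paper's is shorter but incomplete as written.

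One slip to fix in Step~3. The correct identities are $D(r^{-1}H)=D(H)+\mathrm{div}(r)$ and $r^{-1}\mathcal L(D)=\mathcal L(D+\mathrm{div}(r))$. Check with $H=2\Z$, $r=2$: $D(H)=-[2]$ and $D(\Z)=0$. You flipped the sign in both formulas, so the errors cancel and the identity you need, $\mathcal L(D(H'))=r^{-1}\mathcal L(D(H))$, still holds. Each displayed formula is nonetheless false as written.
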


\begin{proof}
    The map is well-defined: for an arithmetic divisor
$D=\sum_p n_p[p]$, the set $\mathcal L(D)$
is clearly an additive subgroup of $\Q$. Since $n_p<0$ for only finitely many primes,
there exists a nonzero $x\in\Q$ satisfying these inequalities, so $\mathcal L(D)$ has
rank~$1$.

Conversely, let $H\subset\Q$ be a rank-$1$ subgroup. For each prime $p$, define
\[
n_p:=-\inf\{v_p(x)\mid x\in H\}\in\Z\cup\{+\infty\},
\]
and set $D_H:=\sum_p n_p[p]$.
Since $H\subset\Q$, for any fixed $p$ the set $\{v_p(x)\mid x\in H\}$ is bounded below,
so $n_p>-\infty$.
Moreover, choose a nonzero element $q\in H$.
Then for all but finitely many primes $p$ we have $v_p(q)\ge 0$, which implies
$v_p(x)\ge 0$ for all $x\in H$ and hence $n_p\ge 0$.
Thus $D_H$ is an arithmetic divisor.

By construction, an element $x\in\Q$ lies in $\mathcal L(D_H)$ if and only if
$v_p(x)\ge \inf\{v_p(y)\mid y\in H\}$ for all $p$, which is equivalent to $x\in H$.
Hence $\mathcal L(D_H)=H$, and the correspondence is bijective.
\end{proof}

\begin{rem}
Under the bijection stated in the proposition:
\begin{itemize}[leftmargin=*, labelindent=0pt]
    \item[-] The effective divisors (supernatural numbers) correspond to subgroups $H\subset\Q$ such that $\Z \subseteq H$.
    \item[-] The classical divisors (finite support, integer coefficients) correspond to fractional ideals of $\Z$.
    \item[-] The divisor $D_p = \infty \cdot [p]$ supported solely at $p$ corresponds to the group $\Z[1/p]$.
\end{itemize}
\end{rem}

\subsection{Linear equivalence and the Picard monoid}

A non trivial abelian group is isomorphic to a subgroup of the additive group $\Q$
if and only if it is torsion free and of rank-1.
Moreover, two nonzero subgroups of $\Q$ are isomorphic if and only if they differ
by multiplication by a nonzero rational number.
Explicitly, for $M,N\in \N^{sup}$:
\[
H_M \cong H_N
\iff
\exists\, q \in \Q^\times \text{ such that } H_M = q\,H_N .
\]

In valuation-theoretic terms, an element $x\in\Q$ belongs to $qH_N$ if and only if
$x/q\in H_N$, that is,
\[
v_p(x)-v_p(q)\ge -n_p \quad \text{for all primes } p.
\]
Equivalently, the corresponding valuation bounds satisfy
\[
m_p = n_p - v_p(q),
\]
with the convention $\infty + k = \infty$.
In divisor language, this is precisely the relation of linear equivalence.
The two arithmetic divisors $D_M=\sum_p m_p[p]$ and $D_N=\sum_p n_p[p]$ satisfy
\[
D_M \sim D_N
\iff
D_M = D_N - \rm{div}(q),
\]
where $\rm{div}(q)=\sum_p v_p(q)[p]$ denotes the principal arithmetic divisor associated
to $q\in\Q^\times$.

\begin{defn} The arithmetic Picard monoid of $\spz$ is  the quotient
\[
\Pic(\spz)
=
\Div(\spz)\big/ \PDiv(\spz),
\]
where $\PDiv(\spz)$ denotes the subgroup of principal arithmetic divisors.
\end{defn}

\begin{prop}\label{rankone}
The assignment $D\mapsto \mathcal L(D)$ induces a bijection between the arithmetic
Picard monoid $\Pic(\spz)$ and the set of isomorphism classes of torsion-free
rank-1 abelian groups.
\end{prop}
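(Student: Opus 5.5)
The proof composes the bijection of Proposition~\ref{rankpro} (the second one, $D\mapsto\mathcal L(D)$ onto rank-1 subgroups of $\Q$) with the passage to isomorphism classes. Concretely, consider the composite map
\[
\Div(\spz)\xrightarrow{\ \mathcal L\ }\{\text{rank-1 subgroups of }\Q\}\longrightarrow\{\text{iso.\ classes of torsion-free rank-1 groups}\},
\]
and show three things. First, it is surjective. Second, two divisors have the same image exactly when they differ by a principal divisor. Third, $\PDiv(\spz)$ acts on $\Div(\spz)$ by translation, so that the quotient $\Div(\spz)/\PDiv(\spz)$ is the set of orbits.

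For surjectivity, a nonzero torsion-free rank-1 group $L$ embeds in $L\otimes_\Z\Q\cong\Q$. Its image is a rank-1 subgroup of $\Q$, which by Proposition~\ref{rankpro} is $\mathcal L(D)$ for a unique $D$. If the zero group is meant to be included, I would either exclude it by convention, since ``rank 1'' forces $L\neq 0$, or remark on this explicitly.

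The key step is the equivalence
\[
\mathcal L(D)\cong\mathcal L(D')\iff D'=D-\mathrm{div}(q)\ \text{ for some } q\in\Q^\times.
\]
For ($\Leftarrow$), check directly from the definition that $\mathcal L(D-\mathrm{div}(q))=q\,\mathcal L(D)$. Indeed $x\in q\mathcal L(D)$ iff $v_p(x)-v_p(q)\ge -n_p$ for all $p$, i.e.\ $v_p(x)\ge -(n_p-v_p(q))$, with the convention $\infty+k=\infty$. Multiplication by $q$ then gives the isomorphism. For ($\Rightarrow$), an isomorphism $f:\mathcal L(D)\to\mathcal L(D')$ between subgroups of $\Q$ is multiplication by a rational number. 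To see this, pick $x_0\neq0$ in $\mathcal L(D)$ and set $q=f(x_0)/x_0$. For any $x$, rank one gives integers $m,n$, not both zero, with $mx=nx_0$. Then $mf(x)=nf(x_0)=nqx_0=mqx$, and torsion-freeness yields $f(x)=qx$. Hence $\mathcal L(D')=q\mathcal L(D)=\mathcal L(D-\mathrm{div}(q))$, and injectivity in Proposition~\ref{rankpro} gives $D'=D-\mathrm{div}(q)$.

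The only delicate point, and the one I would watch, is that $\Div(\spz)$ is a monoid rather than a group, because of the $\infty$ coefficients. So the quotient must be read as orbits under translation by the group $\PDiv(\spz)$, which acts by $D\mapsto D+\mathrm{div}(q)$. This action is well defined because $\mathrm{div}(q)$ has finite support and integer coefficients: it preserves the condition that only finitely many coefficients are negative, and it fixes the infinite coefficients. With that reading, the equivalence above says precisely that the fibres of the composite map are the $\PDiv(\spz)$-orbits. Hence the map descends to a bijection $\Pic(\spz)\to\{\text{iso.\ classes}\}$.
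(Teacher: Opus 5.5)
Your proposal is correct and follows the same route as the paper. The paper's proof simply combines the bijection $D\mapsto\mathcal L(D)$ onto rank-1 subgroups of $\Q$ with the identification, made in the text just before the statement, of linear equivalence with scaling by $\Q^\times$; you correctly fill in the details the paper leaves implicit, namely that an isomorphism between subgroups of $\Q$ is multiplication by a rational, and that $\Div(\spz)/\PDiv(\spz)$ is to be read as the set of orbits of the translation action of the group $\PDiv(\spz)$.
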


\begin{proof}
This is an immediate consequence of the correspondence between arithmetic divisors
and rank-1 subgroups of $\Q$, together with the identification of linear equivalence
with scaling by $\Q^\times$.
\end{proof}

\begin{rem}\label{choice} 
Assuming the Axiom of Choice, the set of isomorphism classes of torsion-free
rank-1 abelian groups has cardinality equal to that of the continuum.
Nevertheless, Proposition~\ref{rankone} shows that this space admits no effective
classification by real-valued invariants.
More precisely, although its set-theoretic cardinality is that of $\R$, the
\emph{effective cardinality} of this quotient is strictly larger than the continuum.
This phenomenon is a consequence of the ergodic nature of the linear equivalence
relation on $\Div(\spz)$.
Indeed, one can equip $\Div(\spz)$ with a probability measure for which the linear
equivalence relation is ergodic\footnote{That is, any measurable subset of
$\Div(\spz)$ that is a union of equivalence classes has measure $0$ or $1$.}.
It follows that any measurable map from the quotient space to $\R$ is almost
everywhere constant, and therefore cannot separate equivalence classes.

It follows that the moduli space of rank-1 torsion-free groups lies beyond the
scope of classification by classical invariants.

\end{rem}

\subsection{Tensor product and addition of divisor classes}

As in algebraic geometry addition of divisors induces a monoidal structure on divisor
classes, in the present arithmetic setting this operation admits a concrete
interpretation in terms of tensor products of rank-1 subgroups of $\Q$.

One defines the sum of two arithmetic divisors $D=(n_p)$ and $D'=(n'_p)$, by the rule 
$$
(D+D')_p=n_p+n'_p.
$$
This operation induces a monoid structure on $\Pic(\spz)$ that is interpreted in terms of the associated groups.

\begin{prop}
Let $D_1$ and $D_2$ be arithmetic divisors on $\spz$, and let
$H_1,H_2\subset\Q$ be the associated rank-1 subgroups.
Then the subgroup associated to the sum $D_1+D_2$ is
\[
H_{D_1+D_2}=H_1\cdot H_2 \;\cong\; H_1\otimes_{\Z} H_2,
\]
where the tensor product is canonically identified with the product inside $\Q$.
\end{prop}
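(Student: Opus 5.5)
We prove two separate claims: first the equality $\mathcal L(D_1+D_2)=H_1\cdot H_2$ inside $\Q$, where $H_1\cdot H_2$ is the additive subgroup generated by the products $xy$ with $x\in H_1$, $y\in H_2$; second the isomorphism $H_1\otimes_\Z H_2\cong H_1\cdot H_2$ induced by multiplication.

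\emph{Step 1: the inclusion $H_1\cdot H_2\subseteq \mathcal L(D_1+D_2)$.} Write $D_i=\sum_p n^{(i)}_p[p]$. For $x\in H_1$, $y\in H_2$ we have $v_p(xy)=v_p(x)+v_p(y)\ge -(n^{(1)}_p+n^{(2)}_p)$ for every $p$, using the convention $\infty+k=\infty$. Since $\mathcal L(D_1+D_2)$ is a group, the inclusion follows. The sum $D_1+D_2$ is again an arithmetic divisor, since it has negative coefficients at only finitely many primes.

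\emph{Step 2: the reverse inclusion.} This is the main step. We reduce it to the computation of the divisor of $H_1\cdot H_2$ via Proposition~\ref{rankpro}. The group $H_1H_2$ is a nonzero subgroup of $\Q$, hence of rank $1$, so $H_1H_2=\mathcal L(D')$ with $n'_p=-\inf\{v_p(z)\mid z\in H_1H_2\}$. It then suffices to show $n'_p=n^{(1)}_p+n^{(2)}_p$ for every $p$.

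The bound $n'_p\le n^{(1)}_p+n^{(2)}_p$ holds because valuations of sums satisfy $v_p(\sum z_i)\ge\min_i v_p(z_i)$. For the bound $n'_p\ge n^{(1)}_p+n^{(2)}_p$, choose $x\in H_1$ with $-v_p(x)$ equal to $n^{(1)}_p$ (or arbitrarily large if $n^{(1)}_p=\infty$), and similarly choose $y\in H_2$. Then the product $xy\in H_1H_2$ realizes the required valuation, or arbitrarily negative valuations in the infinite case. The infimum defining $n^{(i)}_p$ is attained when it is finite, since valuations are integers. Hence $D'=D_1+D_2$ and $H_1H_2=\mathcal L(D_1+D_2)$. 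The only delicacy is the bookkeeping with $+\infty$ and with negative coefficients; both are handled coordinatewise.

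\emph{Step 3: the tensor product.} The multiplication map $\mu:H_1\otimes_\Z H_2\to\Q$, $x\otimes y\mapsto xy$, is bilinear and has image $H_1H_2$. For injectivity we use that tensor products of torsion-free groups over $\Z$ are flat, so the inclusions $H_i\subset\Q$ induce an injection $H_1\otimes H_2\hookrightarrow \Q\otimes_\Z\Q$. Moreover $\Q\otimes_\Z\Q\cong\Q$ via multiplication, because $\Q$ is a localization of $\Z$. The composite of these maps is $\mu$, so $\mu$ is injective.

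Alternatively, one can argue directly. Any element of $H_1\otimes H_2$ is a finite sum $\sum x_i\otimes y_i$. All the $x_i$ lie in a cyclic subgroup $\Z\cdot(1/m)x_0\cap H_1$-type finitely generated subgroup, which is cyclic since finitely generated subgroups of $\Q$ are cyclic. The same holds for the $y_i$. So the element lives in the image of $\Z a\otimes\Z b\cong\Z$, on which $\mu$ is visibly injective. Then the direct limit argument over these cyclic subgroups gives injectivity, since tensor products commute with filtered colimits.

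Finally, since isomorphism classes depend only on divisor classes, this shows that addition on $\Pic(\spz)$ corresponds to tensor product of isomorphism classes.
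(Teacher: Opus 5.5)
Your argument is correct, but Step 2 takes a different route from the paper. The paper proves the reverse inclusion by explicit factorization. For $0\neq z\in\mathcal L(D_1+D_2)$ it splits each exponent $k_p=-v_p(z)$ as $k_{1,p}+k_{2,p}$ with $k_{i,p}\le n_p(D_i)$, and writes $z=xy$ with $x=\pm\prod_p p^{-k_{1,p}}\in H_1$ and $y=\prod_p p^{-k_{2,p}}\in H_2$. This is self-contained, and it gives the sharper fact that every element of $\mathcal L(D_1+D_2)$ is a single product $xy$, so the set of products is already a group.

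You instead compute the divisor of $H_1H_2$ prime by prime and invoke the classification $D\mapsto\mathcal L(D)$. This is shorter, but it uses both directions of that bijection. You need $D_{\mathcal L(D)}=D$ to treat $n^{(i)}_p$ as an attained infimum over $H_i$; this direction is easy, by exhibiting an element of exact valuation $-n_p$. You also need $\mathcal L(D_H)=H$ to pass from equal divisors to equal groups. The latter is the substantive direction: after rescaling $H$ to contain $\Z$, it is the Chinese remainder argument of the supernatural-number proposition, and the paper's proof of the rank-one version only asserts it. So your Step 2 is exactly as complete as that citation.

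Your Step 3, conversely, proves what the paper takes for granted. The paper merely asserts $H_1\otimes_{\Z} H_2\cong H_1\cdot H_2$, while you establish injectivity of the multiplication map. The fact actually needed is that $H_1$ and $\Q$ are torsion-free, hence flat over $\Z$, so that $H_1\otimes H_2\hookrightarrow H_1\otimes\Q\hookrightarrow\Q\otimes\Q\cong\Q$. In your alternative version the filtered-colimit remark is superfluous: if $t$ is the image of some $t'\in\Z a\otimes\Z b\cong\Z$ and $\mu(t)=0$, then $\mu(t')=0$ forces $t'=0$ and hence $t=0$.
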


\begin{proof}
Recall that for an arithmetic divisor $D=\sum_p n_p(D)[p]$ one has
\[
H_D=\{x\in\Q \mid v_p(x)\ge -n_p(D)\ \text{for all }p\}.
\]
For rank-1 subgroups of $\Q$, the tensor product over $\Z$ is canonically identified
with the subgroup generated by products:
\[
H_1\otimes_{\Z} H_2 \;\cong\; H_1\cdot H_2
:= \Bigl\langle xy \mid x\in H_1,\; y\in H_2 \Bigr\rangle \subset \Q.
\]

We first show the inclusion $H_1\cdot H_2 \subseteq H_{D_1+D_2}$.
Let $x\in H_1$ and $y\in H_2$. For every prime $p$,
\[
v_p(x)\ge -n_p(D_1), \qquad v_p(y)\ge -n_p(D_2),
\]
hence
\[
v_p(xy)=v_p(x)+v_p(y)\ge -\bigl(n_p(D_1)+n_p(D_2)\bigr)
= -n_p(D_1+D_2).
\]
Since the $p$-adic valuation of a sum is bounded below by the minimum of the valuations
of its summands, the same inequality holds for any finite $\Z$-linear combination of
such products. Thus $H_1\cdot H_2\subseteq H_{D_1+D_2}$.

For the reverse inclusion, let $z\in H_{D_1+D_2}$.
Write $z=\pm\prod_p p^{-k_p}$ with $k_p=-v_p(z)$.
The condition $z\in H_{D_1+D_2}$ means
\[
k_p \le n_p(D_1)+n_p(D_2)\qquad \text{for all }p.
\]
For each prime $p$, choose integers $k_{1,p}\le n_p(D_1)$ and
$k_{2,p}\le n_p(D_2)$ such that $k_p=k_{1,p}+k_{2,p}$
(which is always possible, with the convention $\infty+k=\infty$).
Define
\[
x:=\pm\prod_p p^{-k_{1,p}}\in H_1,
\qquad
y:=\prod_p p^{-k_{2,p}}\in H_2.
\]
Then $z=xy$, so $z\in H_1\cdot H_2$.
This proves $H_{D_1+D_2}\subseteq H_1\cdot H_2$.

Therefore $H_{D_1+D_2}=H_1\cdot H_2\cong H_1\otimes_{\Z} H_2$, and the addition of
arithmetic divisors corresponds to the tensor product of the associated rank-one
groups.
\end{proof}

\subsection{Adelic description of arithmetic divisors}

Let $\A_f$ denote the ring of finite rational adeles, and let
$\widehat{\Z}\subset \A_f$ be the compact subring of profinite integers,
so that $\widehat{\Z}^\times=\prod_p \Z_p^\times$.
To a finite adele $a=(a_p)\in\A_f$ we associate the family of valuations
\[
\tilde\Phi(a):=(v_p(a_p))_{p},
\]
with the convention $v_p(0)=\infty$.
Since $a_p\in\Z_p$ for almost all primes $p$ (by definition of the restricted product in adeles), one has $v_p(a_p)\ge 0$
for almost all $p$, so $\tilde\Phi(a)$ defines an arithmetic divisor.

Furthermore, multiplication by a unit $u=(u_p)\in\widehat{\Z}^\times$ does not affect
valuations, since $v_p(u_p)=0$ for all $p$.
Hence $\tilde\Phi$ factors through the quotient and induces a map
\begin{equation}\label{fifi}
\Phi:\A_f/\hat{\Z}^\times \longrightarrow \Div(\Spec(\Z)).
\end{equation}
The map $\Phi$ is surjective: given an arithmetic divisor
$D=(n_p)$, the adele $a=(a_p)$ with $a_p=p^{n_p}$ (with the convention
$p^\infty=0$) is a preimage.
It is also injective because two elements of $\Q_p$ with the same valuation
differ by multiplication by a unit in $\Z_p^\times$. It then follows that $\Phi$ is a bijection.
Quotienting further by the diagonal action of $\Q^\times$, one obtains
a natural bijection
\begin{equation}\label{fifi1}
\Phi:\Q^\times\backslash\A_f/\hat{\Z}^\times
\;\xrightarrow{\ \sim\ }\;
\Div(\Spec(\Z))/\PDiv(\spz)
=\Pic(\spz).
\end{equation}

\begin{rem}\label{ade}
Let $x=(x_p)\in\A_f$.
The rank-1 subgroup of $\Q$ associated to the divisor $\Phi(x)$ is
\[
H=\{q\in\Q \mid xq\in\widehat{\Z}\}.
\]
Indeed, $xq\in\widehat{\Z}$ if and only if $v_p(x_p)+v_p(q)\ge 0$ for all $p$.
\end{rem}

The ring $\A_f$ is a commutative monoid under componentwise multiplication.
This operation is compatible with the equivalence relation defined by the
action of the group
\[
G=\Q_+^\times\times\widehat{\Z}^\times,
\]
and therefore it descends to a well-defined multiplication on the quotient:
\[
[a]\cdot[b]:=[ab].
\]
We derive the following

\begin{prop}\label{ident}
The map $\Phi$ in \eqref{fifi1} is an isomorphism of commutative
monoids: multiplication of finite adeles corresponds to addition of generalized
divisors.
\end{prop}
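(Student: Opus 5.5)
We already know from \eqref{fifi1} that $\Phi$ is a bijection, so it remains to check that it is a morphism of monoids. I would argue first at the level of $\A_f/\widehat{\Z}^\times\to\Div(\spz)$ and then pass to the quotient by $\Q^\times$.

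\emph{Step 1: multiplicativity before quotienting.} For $a=(a_p)$ and $b=(b_p)$ in $\A_f$, the $p$-component of $ab$ is $a_pb_p$. The valuation on $\Q_p$ satisfies
\[
v_p(a_pb_p)=v_p(a_p)+v_p(b_p).
\]
This holds also when one factor vanishes, using the convention $v_p(0)=\infty$ and $\infty+k=\infty$, which is the same convention used for adding divisors. Hence $\tilde\Phi(ab)=\tilde\Phi(a)+\tilde\Phi(b)$ in $\Div(\spz)$. The unit is also preserved: $\tilde\Phi(1)$ is the zero divisor. Since $\widehat{\Z}^\times$ acts trivially on valuations, this descends to the identity $\Phi([a][b])=\Phi([a])+\Phi([b])$ on $\A_f/\widehat{\Z}^\times$.

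\emph{Step 2: compatibility with the two quotients.} On the adelic side, the product descends to $\Q^\times\backslash\A_f/\widehat{\Z}^\times$ because $(qa)(q'b)=qq'\,ab$ and $(au)(bu')=ab\,uu'$. On the divisor side, $\PDiv(\spz)$ is a subgroup and the addition in $\Div(\spz)$ is commutative, so $(D-\mathrm{div}(q))+(D'-\mathrm{div}(q'))=D+D'-\mathrm{div}(qq')$. Addition therefore descends to $\Pic(\spz)$. We also need that $\Phi$ intertwines the two actions, namely
\[
\tilde\Phi(qa)=\tilde\Phi(a)+\mathrm{div}(q)
\]
for $q\in\Q^\times$ embedded diagonally. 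This is Step 1 applied with $b=q$. (The sign convention, adding or subtracting $\mathrm{div}(q)$, is irrelevant here because $\PDiv(\spz)$ is a group.)

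\emph{Step 3: conclusion.} A bijection between commutative monoids that respects the operation and the unit is an isomorphism of monoids, so $\Phi$ is one.

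The only delicate point is the bookkeeping with infinite coefficients, where $a_p=0$ corresponds to $n_p=\infty$. One has to make sure that the rule $\infty+k=\infty$ matches $0\cdot b_p=0$, and that multiplying by $q$ never creates a coefficient $-\infty$. Both are immediate, since $\mathrm{div}(q)$ has finite entries.

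As a consistency check, the resulting monoid structure agrees with the tensor-product description of the preceding proposition through Remark~\ref{ade}: one has $H_{ab}=H_a\cdot H_b$.
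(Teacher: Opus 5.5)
Your proposal is correct and takes the same route as the paper, whose proof is exactly your Step 1: additivity of $v_p$ on each component (with $v_p(0)=\infty$ and $\infty+x=\infty$) gives $D_{ab}=D_a+D_b$. Your Step 2 adds the checks that both operations descend to the quotients and that $\tilde\Phi(qa)=\tilde\Phi(a)+\mathrm{div}(q)$, which the paper leaves implicit.
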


\begin{proof}
For each prime $p$ the valuation is additive,
\[
v_p(a_p b_p)=v_p(a_p)+v_p(b_p),
\]
with the convention $\infty+x=\infty$.
Thus, if $D_a$ and $D_b$ are the divisors associated to finite adeles $a$ and $b$, one has
\[
D_{ab}=\sum_p\bigl(v_p(a_p)+v_p(b_p)\bigr)[p]=D_a+D_b.
\]
\end{proof}

\subsection{The arithmetic Abel--Jacobi map and boundary points}

In classical algebraic geometry, the Abel-Jacobi map $\phi: C \to \operatorname{Jac}(C)$ sends a point $P$ on a curve $C$ to the isomorphism class of the line bundle $\mathcal{O}(P)$ corresponding to the divisor $[P]$.
On $\Spec(\Z)$ however, the analogous assignment
$p\mapsto[\mathcal O(p)]$ is trivial, since the ideal $p\Z$ is isomorphic to $\Z$
as a rank-1 $\Z$-module.
Thus, in order to obtain a nontrivial arithmetic analogue of the Abel--Jacobi map,
one must allow the image to lie at the boundary of the moduli space of torsion-free
rank-1 groups.

\begin{defn}[Arithmetic Abel--Jacobi map]\label{aAJ}
The \emph{arithmetic Abel--Jacobi map} is the injection
\[
\Theta:\Spec(\Z)\longrightarrow \Pic(\spz)
\]
defined by
\[
\Theta(p)=H_p:=\Z[1/p]\qquad\text{for a prime }p,
\]
and
\[
\Theta(\eta)=\Z,
\]
where $\eta$ denotes the generic point of $\spz$.
\end{defn}

In terms of generalized divisors, the map $\Theta$ sends the generic point $\eta$
to the trivial divisor, and a prime $p$ to the divisor with infinite coefficient
at $p$,
\[
p\longmapsto D_\infty(p):=\infty\cdot[p].
\]

\emph{Injectivity.}
For distinct primes $p\neq q$, the groups $\Z[1/p]$ and $\Z[1/q]$ are not isomorphic:
the former is $p$-divisible while the latter is not.
Hence $\Theta$ separates points of $\Spec(\Z)$.\vspace{.02in}

\emph{Boundary interpretation.}
Although the primes $p$ are interior points of the scheme $\Spec(\Z)$,
their images under $\Theta$ correspond to arithmetic divisors with infinite
multiplicity (representing singularities or poles of infinite order), and therefore lie on the boundary of the Picard space.\vspace{.05in}

Let $C$ be a smooth projective curve of genus $g$ over a field $k$.
Recall that the Jacobian variety $J(C)$ classifies isomorphism classes of degree-zero line bundles on $C$.
The relation between $C$ and $J(C)$ is governed by the Jacobi inversion theorem,
which identifies the $g$-th symmetric power $C^{(g)}$ as a fundamental configuration space
for the Abel--Jacobi map:
\[
\Sigma: C^{(g)} \longrightarrow J(C)
\]
\[
\{P_1, \dots, P_g\} \longmapsto \mathcal{O}(P_1 + \dots + P_g - g P_0)
\]
where $P_0$ is a fixed base point.

In our arithmetic setting, $\spz$ should be regarded as a curve of infinite genus.
Accordingly, one is led to replace the finite symmetric power $C^{(g)}$
by an infinite configuration space.
We model this arithmetic configuration space as follows.

\begin{defn}
Let $\mathcal{P}(\Spec \Z)$ be the set of all subsets $S$ of prime numbers. This corresponds to the set of reduced effective divisors with infinite coefficients:
\[
S \longleftrightarrow D_S = \sum_{p \in S} \infty \cdot [p]
\]
\end{defn}

\begin{defn}[Arithmetic Abel-Jacobi map on configurations]
We define the map $$\Theta: \mathcal{P}(\Spec \Z) \to \Pic(\spz)\qquad 
\Theta(S) = \Z_S = \Z[1/p \mid p \in S]
$$
\end{defn}

\begin{thm}[Tensor structure on configurations]
The map $\Theta$ transforms the union of sets (addition of divisors) into the tensor product of points in the Jacobian (multiplication of subgroups):
\[
\Theta(S_1 \cup S_2) \cong \Theta(S_1) \otimes_\Z \Theta(S_2)
\]
\end{thm}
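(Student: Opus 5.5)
The plan is to reduce the statement to the Proposition on tensor products and addition of divisors proved above. That result states $H_{D_1+D_2}=H_1\cdot H_2\cong H_1\otimes_\Z H_2$.

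First I identify $\Theta(S)$ with $\mathcal L(D_S)$, where $D_S=\sum_{p\in S}\infty\cdot[p]$. An element $x\in\Q$ satisfies $v_p(x)\ge -\infty$ for $p\in S$ with no constraint there, and $v_p(x)\ge 0$ for $p\notin S$. So $\mathcal L(D_S)$ is exactly the set of rationals whose denominators involve only primes in $S$, which is $\Z[1/p\mid p\in S]=\Z_S$. This holds for arbitrary, possibly infinite, $S$, because membership of a single rational only involves the finitely many primes dividing its denominator. In particular $\Theta(S)$ is the class of $\mathcal L(D_S)$ in $\Pic(\spz)$ under Proposition~\ref{rankone}.

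Next I compute $D_{S_1}+D_{S_2}$ coefficientwise using the convention $\infty+k=\infty$. At $p\in S_1\cup S_2$ at least one coefficient is $\infty$, so the sum is $\infty$. At $p\notin S_1\cup S_2$ both coefficients vanish, so the sum is $0$. Hence $D_{S_1}+D_{S_2}=D_{S_1\cup S_2}$. This is the precise sense in which union of sets corresponds to addition of divisors: the idempotence $\infty+\infty=\infty$ is what makes overlaps harmless.

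Finally I apply the tensor-product Proposition. It gives
\[
\Theta(S_1\cup S_2)=\mathcal L(D_{S_1\cup S_2})=\mathcal L(D_{S_1}+D_{S_2})=\Z_{S_1}\cdot\Z_{S_2}\cong \Z_{S_1}\otimes_\Z\Z_{S_2}.
\]
As a sanity check one can also verify directly that $\Z_{S_1}\cdot\Z_{S_2}=\Z_{S_1\cup S_2}$. Any $1/(ab)$ with $a$ an $S_1$-number and $b$ an $S_2$-number is a product, and conversely products stay $S_1\cup S_2$-integral.

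The only point requiring care, and the main obstacle, is the canonical identification $H_1\otimes_\Z H_2\cong H_1\cdot H_2$ via $x\otimes y\mapsto xy$, which the earlier proposition takes as given. Here surjectivity is clear. For injectivity I would use the fact that both groups are torsion-free of rank one. Tensoring with $\Q$ gives $\Q\otimes\Q\cong\Q$, and $H_1\otimes H_2$ is torsion-free as a direct limit of free rank-one groups, since $H_i$ is a directed union of cyclic groups and $\otimes$ commutes with directed colimits. Hence $H_1\otimes H_2$ embeds into $(H_1\otimes H_2)\otimes\Q\cong\Q$, compatibly with the multiplication map, which gives injectivity.
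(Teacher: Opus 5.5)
Your proof is correct, but it follows a different route from the paper. The paper argues in one line via the universal property of localization. Tensoring with $\Z_{S_2}=\Z[S_2^{-1}]$ is localization, so $\Z_{S_1}\otimes_\Z\Z_{S_2}\cong \Z_{S_1}[S_2^{-1}]=\Z_{S_1\cup S_2}$, canonically and even as rings.

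You instead pass through the divisor dictionary. You identify $\Theta(S)=\mathcal L(D_S)$, check $D_{S_1}+D_{S_2}=D_{S_1\cup S_2}$ using $\infty+k=\infty$, and invoke the earlier proposition $H_{D_1+D_2}=H_1\cdot H_2\cong H_1\otimes_\Z H_2$. Your route justifies the parenthetical ``union of sets $=$ addition of divisors'' in the statement, which the paper's proof does not address. It also shows the theorem as a special case of the general additivity $\mathcal L(D_1+D_2)\cong\mathcal L(D_1)\otimes_\Z\mathcal L(D_2)$.

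You are also right that the earlier proposition only asserts injectivity of $x\otimes y\mapsto xy$. Your torsion-freeness argument closes that gap for all rank-one subgroups of $\Q$: a directed colimit of copies of $\Z$ with injective transition maps, then an embedding into $(H_1\otimes_\Z H_2)\otimes\Q\cong\Q$.

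The localization route gets this injectivity for free from the canonical isomorphism $M\otimes_\Z T^{-1}\Z\cong T^{-1}M$ for a multiplicative set $T$, i.e. flatness of localization. It is therefore shorter and self-contained. However, it only covers the idempotent divisors $D_S$, that is, the subrings $\Z_S$, and not general rank-one groups.
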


\begin{proof}
This follows from the universal property of localization:
\[
\Z[S_1^{-1}] \otimes_\Z \Z[S_2^{-1}]
\cong
\Z[(S_1 \cup S_2)^{-1}],
\]
which is the arithmetic analogue of the classical identity
$\mathcal O(D_1+D_2)\cong \mathcal O(D_1)\otimes\mathcal O(D_2)$.
\end{proof}

The map $\Theta$ can thus be viewed as the restriction of the Abel--Jacobi map
to what we call the \emph{idempotent stratum} of the arithmetic Jacobian. In other words:\vspace{.03in}

In $\Pic(\spz)$, the elements $H_p = \Z[1/p]$ are idempotent ($H_p \otimes H_p \cong H_p$).\vspace{.02in}

The map $\Theta$ sends a configuration of primes $\{p_1, \dots, p_k\}$ to the point $H_{p_1} \otimes \dots \otimes H_{p_k}$.
\vspace{.05in}

   For a smooth projective curve $C$ of genus $g$, the Abel--Jacobi map identifies the
$rational$ equivalence class of the symmetric power $C^{(g)}$ with the Jacobian
$J(C)$; in particular, it is surjective, as follows from the Riemann--Roch theorem.
By contrast, in the arithmetic case the curve $\spz$ should be regarded as having
infinite genus, and the corresponding Abel--Jacobi map has image equal to the
idempotent elements of the Picard monoid $\Pic(\spz)$.

\begin{thm}
The image of the map $\Theta: \mathcal{P}(\Spec \Z) \to  \Pic(\Spec\,\Z)$ is exactly the set of idempotent elements of the monoid $\Pic(\Spec\,\Z)$.
\[
\text{Image}(\Theta) = \operatorname{Idem}(\Pic(\Spec\,\Z)) 
\]
\end{thm}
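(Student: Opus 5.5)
We work in the divisor model of $\Pic(\spz)$: by Propositions~\ref{rankone} and \ref{ident}, $\Pic(\spz)$ is $\Div(\spz)$ modulo principal divisors, with monoid law given by addition of divisors, i.e.\ the tensor product of rank-1 groups. A class $[D]$ is idempotent exactly when $2D \sim D$, that is, when there is $q\in\Q^\times$ with
\[
2n_p = n_p - v_p(q) \quad\text{for all primes } p,
\]
using the convention $\infty + k = \infty$.

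The inclusion $\text{Image}(\Theta)\subseteq \operatorname{Idem}(\Pic(\spz))$ is immediate. For $D_S=\sum_{p\in S}\infty\cdot[p]$ we have $2D_S = D_S$ on the nose, because $\infty+\infty=\infty$ and $0+0=0$. Equivalently, the tensor-structure theorem gives $\Theta(S)\otimes\Theta(S)\cong\Theta(S\cup S)=\Theta(S)$.

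For the reverse inclusion, let $D=\sum n_p[p]$ represent an idempotent class, and pick $q\in\Q^\times$ with $2D = D - \operatorname{div}(q)$. Split the primes into those with $n_p=\infty$ and those with $n_p$ finite.
\begin{itemize}
\item If $n_p=\infty$, the equation holds automatically.
\item If $n_p$ is finite, we may cancel and obtain $n_p = -v_p(q)$.
\end{itemize}
Since $v_p(q)=0$ for almost all $p$, the finite part of $D$ is a finitely supported integer divisor, namely $-\operatorname{div}(q)$ restricted to the set of primes where $n_p<\infty$.

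Let $S=\{p : n_p=\infty\}$. The claim is that $D\sim D_S$, and we verify it directly. Set
\[
D' := D + \operatorname{div}(q) = D - (-\operatorname{div}(q)).
\]
At $p\notin S$ its coefficient is $n_p + v_p(q) = 0$. At $p\in S$ its coefficient is $\infty + v_p(q)=\infty$. Hence $D' = D_S$ exactly. The global $q$ is what makes this work: a divisor that is finite but nonzero at infinitely many primes cannot be idempotent, and the argument shows this because such a divisor is never principal.

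In group language the same argument reads as follows: if $H\otimes H\cong H$ then $H\cdot H = qH$, and comparing valuations recovers the equation above. I can therefore phrase the proof either way, using Proposition~\ref{rankpro} to pass between subgroups of $\Q$ and divisors.

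The only delicate step is the handling of the arithmetic with $\infty$. Linear equivalence must be used with the convention $\infty + k = \infty$, so cancellation $2n_p = n_p - v_p(q)\Rightarrow n_p=-v_p(q)$ is legitimate only for finite $n_p$. This case split must be stated explicitly.

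One should also check that $D_S$ really lies in $\Div(\spz)$, which holds since it has no negative coefficients. Finally, since the statement concerns classes, the image of $\Theta$ should be read up to isomorphism, which is harmless because $\Theta(S)=\Z_S$ is a specific representative of its class.
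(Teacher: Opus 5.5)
Your proof is correct and takes essentially the same route as the paper. For the reverse inclusion, both arguments write $2D\sim D$ via some $q\in\Q^\times$, cancel $n_p$ only at primes where it is finite, and conclude $D\sim D_S$ with $S=\{p\mid n_p=\infty\}$. For the forward inclusion, the paper uses that $\Z_S$ is a subring, so $H\cdot H=H$, while you note $2D_S=D_S$ directly; this is an equivalent, equally trivial step, and your sign convention for $\operatorname{div}(q)$ differs only by replacing $q$ with $q^{-1}$.
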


\begin{proof}
We first show that  the image of $\Theta$ consists of idempotents. 

Let $S \subseteq \Spec(\Z)$ be a set of primes. Let $H = \Theta(S) = \Z_S = \Z[S^{-1}]$.
Since $\Z_S$ is a subring of $\Q$, it is closed under multiplication. Thus, the product of subgroups satisfies:
\[
H \cdot H = \{ xy \mid x, y \in H \} = H
\]
Since $H \otimes H \cong H \cdot H$, we have $[H]^2 = [H]$. Thus, every element in the image of $\Theta$ is idempotent.

Next, we check that every idempotent in $\Pic(\Spec\,\Z)$ is in the image of $\Theta$.

Let $[H] \in \Pic(\Spec\,\Z)$ be an idempotent element. Let $D=(n_p)$ be a generalized divisor representing $H$.
The condition $[H]^2 = [H]$ implies that $H \otimes H \cong H$. In terms of generalized divisors, this means:
$
2D \sim D
$.
By definition of linear equivalence, there exists a rational number $q \in \Q^\times$ such that:
\[
2D = D + \operatorname{div}(q)
\]
Looking at the coefficient for each prime $p$:
\[
2n_p = n_p + v_p(q)
\]
We analyze this equation for each $p$:
\begin{itemize}[leftmargin=*, labelindent=0pt]
    \item[] \emph{Case $n_p = \infty$.} The equation $2(\infty) = \infty + v_p(q)$ is satisfied for any finite $v_p(q)$.
    \item[] \emph{Case $n_p \in \Z$.} We can subtract $n_p$ from both sides (since it is finite), yielding:
    \[
    n_p = v_p(q)
    \]
\end{itemize}
This implies that the divisor $D$ splits into an infinite part and a principal part:
\[
D = \sum_{n_p = \infty} \infty \cdot [p] + \sum_{n_p \in \Z} v_p(q) \cdot [p]
\]
\[
D = D_\infty + \operatorname{div}(q)
\]
where $D_\infty$ corresponds to the set $S = \{ p \mid n_p = \infty \}$.
Thus $D$ is linearly equivalent to the divisor
\[
D_\infty=\sum_{p\in S}\infty\cdot[p],
\qquad
S=\{p\mid n_p=\infty\},
\]
and the corresponding subgroup is $\Z_S$.
Hence $[H]=\Theta(S)$.
\end{proof}

The set of idempotents in a commutative monoid naturally forms a join-semilattice with the order $e \le f \iff ef = f$. Keeping in mind that:
\begin{itemize}[leftmargin=*, labelindent=0pt]
    \item[] In $\mathcal{P}(\Spec \Z)$, the operation is the union $S \cup T$.
    \item[] In  $\Idem(\Pic(\Spec\,\Z))$, the operation is the tensor product $H_1 \otimes H_2$,
\end{itemize}

it follows that the map $\Theta$ is an isomorphism of semilattices:
\[
\Theta(S \cup T) = \Z_{S \cup T} \cong \Z_S \otimes_\Z \Z_T = \Theta(S) \cdot \Theta(T).
\]

\begin{rem} The range of the  map $\Theta: \mathcal{P}(\spz) \to  \Pic(\spz)$ is a subset of $\Pic(\spz)$ whose effective cardinality is that of the continuum, and hence is strictly smaller than the effective cardinality of $\Pic(\spz)$ as explained in Remark \ref{choice}.
\end{rem}


\section{The arithmetic Picard monoid of $\spzb$}\label{extended}

We extend the divisor-theoretic and Picard-theoretic constructions
developed for $\spz$ to its completion $\spzb$, where the archimedean place is taken
into account.
The guiding principle is to give an intrinsic geometric description of the \emph{arithmetic
Picard monoid} that does not rely a priori on adeles, rather it generalizes the
notion of a metrized line bundle familiar from Arakelov geometry.
We introduce arithmetic divisors on $\spzb$ as rank-1 torsion-free groups equipped
with a (possibly degenerate) metric, define the corresponding arithmetic Picard
monoid, and then relate this intrinsic picture to an adelic description.
A key outcome is the construction of a canonical invariant subgroup of $\R$ attached
to each arithmetic divisor, which allows one to reconstruct the divisor from its
value spectrum when the semi-norm is a norm.
We describe the natural commutative monoid structure on $\Pic(\spzb)$ and
show its compatibility at the intrinsic, adelic, and spectral levels.

\subsection{Arithmetic divisors on $\spzb$}

We begin by defining arithmetic divisors on the completed curve $\spzb$.
These objects should be viewed as the natural analogues of metrized line bundles in
Arakelov geometry, with the important distinction that we allow degenerate metrics.

\begin{defn}
An arithmetic divisor on $\spzb$ is a pair $\mathcal{D} = (L, \|\cdot\|)$ where:
\begin{itemize}
    \item $L$ is a torsion-free abelian group of rank 1.
    \item $\|\cdot\|$ is a semi-norm on the real vector space $L_\R = L \otimes_\Z \R$ which is proportional to the standard absolute value $|\cdot|$ on $\R$ (via any identification $L_\R \cong \R$) 
\end{itemize}
\end{defn}

The condition on the semi-norm means that there exists an isomorphism $\phi: L_\R \xrightarrow{\sim} \R$ and a constant $\lambda \ge 0$ such that  $\|v\| = \lambda |\phi(v)|$ for all $v \in L_\R$.
\begin{itemize}[leftmargin=*, labelindent=0pt]
    \item[] If $\lambda > 0$, the metric is non-degenerate (Hermitian).
    \item[] If $\lambda = 0$, the metric is singular (the zero semi-norm).
\end{itemize}

\subsection{Intrinsic product}

Let $\mathcal{D}_1 = (L_1, \|\cdot\|_1)$ and $\mathcal{D}_2 = (L_2, \|\cdot\|_2)$ be two arithmetic divisors on $\spzb$. We define their product $\mathcal{D}_1 \cdot \mathcal{D}_2$ as the pair $(L, \|\cdot\|)$ where:
\begin{itemize}[leftmargin=*, labelindent=0pt]
    \item[-] The group $L$ is the tensor product: $L = L_1 \otimes_\Z L_2$.
    \item[-] The semi-norm $\|\cdot\|$ on $L_\R \cong (L_1)_\R \otimes (L_2)_\R$ is the tensor product of the semi-norms. Explicitly, for pure tensors $v_1 \otimes v_2$, we have:
    \[
    \|v_1 \otimes v_2\| = \|v_1\|_1 \cdot \|v_2\|_2.
    \]
\end{itemize}
Since $L_1$, and  $L_2$ are rank-1 torsion-free groups, their tensor product is also a rank-1 torsion-free group. The product of their semi-norms is well-defined and proportional to the standard absolute value on $\R$.

\subsection{The  Picard monoid $\Pic(\spzb)$}

Having defined arithmetic divisors on $\spzb$, and their product, we now pass to their isomorphism
classes.
This leads to the arithmetic Picard monoid as the moduli space of these objects, which plays the role of the Picard group of
the completed curve.

\begin{defn}
Two arithmetic divisors $\mathcal D_1=(L_1, \|\cdot\|_1)$ and $\mathcal D_2=(L_2, \|\cdot\|_2)$ on $\spzb$ are isomorphic if there exists a group isomorphism $\psi: L_1 \to L_2$ that is an isometry, \ie  for all $x \in L_1$, one has:
\[
\|\psi(x)\|_2 = \|x\|_1.
\]
Here, we implicitly extend $\psi$ to the real vector spaces.
\end{defn}

\begin{defn}
The arithmetic Picard monoid $\Pic(\spzb)$  is the set of isomorphism classes of arithmetic divisors on $\spzb$ endowed with the product $\mathcal{D}_1 \cdot \mathcal{D}_2$.
\end{defn}

\subsection{Adelic description}\label{adrel} 

We next relate the intrinsic definition of $\Pic(\spzb)$ to an adelic
parametrization.
This comparison clarifies how the finite and archimedean components jointly encode
the arithmetic data of a divisor class.\vspace{.03in}

      Let $\Lambda$ be the map which associates to a rational adele $a=(a_f,a_\infty)$ the pair consisting of:
\begin{itemize}[leftmargin=*, labelindent=0pt]
    \item The group $L_a := \{q \in \Q \mid a_f q \in \widehat{\Z}\}$,
    \item The semi-norm $\|\cdot\|_a$ on $L_a \otimes_\Z \R$ defined by $\|v\|_a = |a_\infty| \cdot |v|$ (via the canonical identification $L_a \otimes \R \cong\Q\otimes \R\cong \R$).
\end{itemize}
We denote with $X_\Q=\Q^\times \backslash \A / \Zhat^\times$  the \emph{Riemann sector} of the adele class space $\A$ of the rationals, endowed with the monoid structure induced by the multiplication of adeles. We now prove Theorem \ref{xqthm}.

\begin{thm} \label{xqthm1} 
The map $\Lambda$ induces a canonical monoid isomorphism $X_\Q \cong \Pic(\spzb)$.
\end{thm}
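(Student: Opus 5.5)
The plan is to verify in turn that $\Lambda$ descends to $X_\Q=\Q^\times\backslash\A/\Zhat^\times$, that the induced map is multiplicative, and that it is bijective. Throughout I will use the finite-adele results of \S\ref{gendiv}: Remark~\ref{ade} (the group attached to $a_f$ is $L_{a}$), the bijection \eqref{fifi1}, and Propositions~\ref{rankone} and \ref{ident}.

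\emph{Well-definedness.} For $u\in\Zhat^\times$ (acting trivially at $\infty$) we have $ua_f q\in\Zhat\iff a_fq\in\Zhat$, so $L_{ua}=L_a$ with the same semi-norm. For $r\in\Q^\times$ acting diagonally, $L_{ra}=r^{-1}L_a$. The map $\psi:L_a\to L_{ra}$, $x\mapsto r^{-1}x$, is a group isomorphism, and under the canonical identifications $L\otimes\R\cong\R$ we get $\|\psi(v)\|_{ra}=|r a_\infty|\,|r^{-1}v|=|a_\infty||v|=\|v\|_a$. Hence it is an isometry, and $\Lambda$ descends to $X_\Q$.

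\emph{Multiplicativity.} By Proposition~\ref{ident} and the tensor-product proposition of \S2.4, $L_{ab}=L_a\cdot L_b$ inside $\Q$, and multiplication $L_a\otimes L_b\to L_a L_b$ is an isomorphism. Under this map $v_1\otimes v_2\mapsto v_1v_2$ in $\R$, so $\|v_1\otimes v_2\|=|a_\infty||v_1|\,|b_\infty||v_2|=|a_\infty b_\infty||v_1v_2|$, which is exactly $\|\cdot\|_{ab}$.

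\emph{Surjectivity.} Let $(L,\|\cdot\|)$ be an arithmetic divisor. Embed $L\hookrightarrow\Q$ and use \eqref{fifi1} (via Remark~\ref{ade}) to choose $a_f$ with $L_{a_f}=L$. The semi-norm on $L_\R=\R$ is $\lambda|\cdot|$ for a unique $\lambda\ge0$, since any two identifications of $\R$ differ by a scalar and the semi-norm is proportional to $|\cdot|$. Then $a=(a_f,\lambda)$ maps to $(L,\|\cdot\|)$.

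\emph{Injectivity.} This is the main step. Suppose $\Lambda(a)\cong\Lambda(b)$ via an isometric isomorphism $\psi:L_a\to L_b$. Any isomorphism between nonzero subgroups of $\Q$ is multiplication by some $r\in\Q^\times$, so $L_b=rL_a=L_{r^{-1}a}$. By the bijectivity in \eqref{fifi1}, in its unquotiented form $\A_f/\Zhat^\times\cong\Div(\spz)\cong\{\text{subgroups}\}$, we get $b_f=u\,r^{-1}a_f$ with $u\in\Zhat^\times$. The isometry condition gives $|b_\infty||rv|=|a_\infty||v|$, hence $|b_\infty|=|r^{-1}a_\infty|$, so $b_\infty=\pm r^{-1}a_\infty$.

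The remaining point is the archimedean sign. Because the quotient is by $\Zhat^\times$ only, the sign at $\infty$ cannot be removed by a finite unit. I will absorb it with the global unit $-1\in\Q^\times$: replace $r$ by $-r$ and $u$ by $-u\in\Zhat^\times$. This fixes the finite component while flipping the sign at $\infty$, so $b$ and $a$ are equivalent in $X_\Q$.

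The degenerate cases need one extra check. If $a_\infty=0$ the sign issue is vacuous. If $a_f$ has zero components, the relevant groups are still nonzero rank-1, since $n_p<0$ holds only finitely often, so the argument is unaffected.
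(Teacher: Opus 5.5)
Your proposal is correct and follows the paper's proof essentially step by step: descent under $\Zhat^\times$ and $\Q^\times$ via $x\mapsto r^{\mp1}x$, multiplicativity via Proposition~\ref{ident} and the tensor-product identification, surjectivity via \eqref{fifi1}, and injectivity by writing the isometry as multiplication by some $r\in\Q^\times$ and then comparing $a_f$ and $|a_\infty|$. Your explicit absorption of the archimedean sign by $-1\in\Q^\times$ spells out what the paper leaves implicit when it writes $a=(u,\epsilon)rb$. That step works because $(u,\epsilon)=\epsilon\cdot(\epsilon u,1)$ lies in $\Q^\times\Zhat^\times$.
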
 

\begin{proof} 
We proceed in four steps.

\emph{1. Well-definedness.} 
We show that given an adele $a$,  $a\mapsto \Lambda(a)$ depends only on the class of $a$ in $X_\Q$.
First, let $u \in \Zhat^\times$. Since units do not affect integrality, we have $L_{ua} = L_a$. As $(ua)_\infty = a_\infty$, the norms coincide, so $\Lambda(ua) = \Lambda(a)$.
Next, let $r \in \Q^\times$ and consider $a' = ra$. The group $L_{a'}$ is defined by the condition $q(ra)_f \in \Zhat$, which is equivalent to $qr \in L_a$. Thus $L_{a'} = r^{-1} L_a$.
The archimedean component transforms as $a'_\infty = r a_\infty$. Consider the multiplication map $\psi: L_{a'} \to L_a$ defined by $\psi(q) = rq$. This is a group isomorphism satisfying:
\[
\|\psi(q)\|_a = |a_\infty| \, |rq| = |r a_\infty| \, |q| = |a'_\infty| \, |q| = \|q\|_{a'}.
\]
Thus, $\psi$ defines an isomorphism of arithmetic divisors $\Lambda(a') \cong \Lambda(a)$. This confirms that $\Lambda$ descends to the quotient $X_\Q$.

\emph{2. Injectivity.} 
Let $a, b \in \A$ be such that $\Lambda(a)\cong \Lambda(b)$. Let $\psi: L_a \to L_b$ be an isomorphism compatible with the norms. Since $L_a$ and $L_b$ are rank-1 subgroups of $\Q$, there exists a unique $r \in \Q^\times$ such that $\psi(x) = rx$ for all $x \in L_a$.
Define $b' = rb$. As shown in step 1, the map $\phi(y) = r^{-1}y$ provides an isomorphism $\Lambda(b) \cong \Lambda(b')$. Consequently, the composition $\rho = \phi \circ \psi$ is an automorphism of $\Lambda(a)$ to $\Lambda(b')$ given by $\rho(x) = x$.
The equality of the underlying groups $L_a = L_{b'}$ implies that the finite adeles $a_f$ and $b'_f$ generate the same lattice, so there exists $u \in \Zhat^\times$ such that $a_f = u b'_f$.
The equality of the norms $\|x\|_a = \|x\|_{b'}$ implies $|a_\infty| |x| = |b'_\infty| |x|$, hence $|a_\infty| = |b'_\infty|$. Thus $a_\infty = \epsilon b'_\infty$ for some $\epsilon \in \{\pm 1\}$.
It follows that $a = (u, \epsilon) b' = (u, \epsilon) r b$, so $a$ and $b$ represent the same class in $X_\Q$.

\emph{3. Surjectivity.} 
This follows from Proposition \ref{ident}. Given any arithmetic divisor $(L, \|\cdot\|)$ with $L \subset \Q$, there exists a finite adele $a_f$ such that $L = L_a$. The norm on $L \otimes \R \cong \R$ is necessarily of the form $\|x\| = \lambda |x|$ for some $\lambda \ge 0$. Choosing $a_\infty$ such that $|a_\infty| = \lambda$, we obtain $\Lambda(a) \cong (L, \|\cdot\|)$.

\emph{4. Monoid Structure.} 
Proposition \ref{ident} establishes that the map $a \mapsto L_a$ is a monoid homomorphism, identifying the product of adeles with the product of subgroups (which is isomorphic to the tensor product). For the norms, we observe:
\[
\|x \otimes y\|_{ab} = |(ab)_\infty| |xy| = (|a_\infty| |x|) (|b_\infty| |y|) = \|x\|_a \|y\|_b.
\]
Thus, $\Lambda$ preserves the tensor product structure.
\end{proof}

\subsection{The adelic invariant subgroup $I_a$}

Let $a=(a_f,a_\infty) \in \A$ be a rational adele representing a divisor class in $\Pic(\spzb)$ (in view of Theorem \ref{xqthm1}). While the group $L_a \subset \Q$ defined by the finite part $a_f$ of the adele depends on the representative $a\in\A$, we shall construct a \emph{canonical realization} of this group inside $\R$ that serves as a complete invariant for the class.

\begin{defn}\label{adinv}
Given a rational adele $a\in\A$, we define the subgroup $I_a \subset \R$ as the image of the group $L_a\subset \Q$ scaled by the archimedean component $a_\infty$
\[
I_a := a_\infty L_a = \{ x \in \R \mid x = a_\infty q, \text{ and } q a_f \in \Zhat \}.
\]
\end{defn}

\begin{thm}[Invariance]
The subgroup $I_a\subset \R$ depends only on the class of $a\in\A$ in the double quotient $\Q^\times \backslash \A / \Zhat^\times$.
\end{thm}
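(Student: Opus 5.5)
The plan is to check invariance separately under the two group actions that generate the equivalence relation on $\A$: right multiplication by $u\in\Zhat^\times$ (embedded in $\A$ with trivial archimedean component) and left multiplication by $r\in\Q^\times$ (embedded diagonally). Since every element of the double quotient class of $a$ has the form $rau$, it suffices to show $I_{ua}=I_a$ and $I_{ra}=I_a$; the general case then follows by composing the two.

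\emph{Units.} Let $u\in\Zhat^\times$, viewed as the adele $(u,1)$. Then $(ua)_\infty=a_\infty$. Since $u$ is invertible in $\Zhat$, for $q\in\Q$ one has $q\,u a_f\in\Zhat$ if and only if $q a_f\in\Zhat$; this is the observation $L_{ua}=L_a$ already made in step 1 of the proof of Theorem \ref{xqthm1}. Hence $I_{ua}=a_\infty L_a=I_a$.

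\emph{Rationals.} Let $r\in\Q^\times$ and $a'=ra$, so that $a'_f=ra_f$ and $a'_\infty=ra_\infty$. By step 1 of the proof of Theorem \ref{xqthm1}, $L_{a'}=r^{-1}L_a$. Therefore
\[
I_{a'}=a'_\infty L_{a'}=(ra_\infty)(r^{-1}L_a)=a_\infty L_a=I_a .
\]
The only point requiring care is that the archimedean factor $r$ and the finite factor $r^{-1}$ cancel exactly. This works because $\Q^\times$ is embedded diagonally, so the same $r$ scales both components. This step is the heart of the statement, but it is straightforward.

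Finally, I would note the degenerate case $a_\infty=0$. There $I_a=\{0\}$ for every representative, so invariance is trivial, and the argument above covers it without change. Combining the two steps shows that $I_a$ depends only on the class of $a$ in $\Q^\times\backslash\A/\Zhat^\times$. I do not expect any real obstacle.
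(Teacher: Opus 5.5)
Your proposal is correct and matches the paper's proof: $I_a$ is invariant under $\Zhat^\times$ because $L_{ua}=L_a$ and $(ua)_\infty=a_\infty$, and under $\Q^\times$ because $L_{ra}=r^{-1}L_a$ exactly cancels $(ra)_\infty=r\,a_\infty$. The paper also mentions an alternative route, which derives the invariance from the intrinsic value-spectrum characterization $I_a=S(\mathcal D)\cup(-S(\mathcal D))$.
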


\begin{proof}
This follows directly from the calculations in the proof of Theorem \ref{xqthm1}. Invariance under $\Zhat^\times$ is immediate as units do not affect the lattice $L_a$. For invariance under $\Q^\times$, if $a' = r a$ with $r \in \Q^\times$, we established that $L_{a'} = r^{-1} L_a$ and $a'_\infty = r a_\infty$. Consequently, $I_{a'} = (r a_\infty)(r^{-1} L_a) = a_\infty L_a = I_a$.
Alternatively, this invariance is a consequence of the intrinsic characterization of $I_a$ via the value spectrum given in Lemma \ref{lem:spectrum} below.
\end{proof}

Next, we reformulate the adelic invariant subgroup in intrinsic terms, using only the metric
structure of an arithmetic divisor $\mathcal{D} = (L, \|\cdot\|)\in \Pic(\spzb)$.
This leads to the notion of the value spectrum and allows for a reconstruction
theorem. 

\begin{defn}
The value spectrum of an arithmetic divisor $\mathcal{D} = (L, \|\cdot\|)\in \Pic(\spzb)$ is the image of the group $L$ under the following norm map:
\[
S(\mathcal{D}) := \{ \|x\| \mid x \in L \} \subset [0, \infty).
\]
\end{defn}

Let $I(\mathcal{D}) \subset \R$ be the subgroup generated by the set $S(\mathcal{D})$.
If $\mathcal{D}$ is represented by an adele $a\in\A$, then $I(\mathcal{D})$ is precisely the invariant subgroup $I_a = a_\infty L_a$ defined previously.

\begin{lem}\label{lem:spectrum}
Let $a \in \A$ be an adele representing a class $\mathcal{D}$ in $\Pic(\spzb)$. Then the value spectrum $S(\mathcal{D})$ coincides with the set of non-negative elements of the invariant subgroup $I_a$:
\[
S(\mathcal{D}) = I_a \cap [0, \infty).
\]
Conversely, since the subgroup $I_a \subset \R$ is symmetric with respect to the origin, it is fully recovered from the spectrum:
\[
I_a = S(\mathcal{D}) \cup (-S(\mathcal{D})).
\]
\end{lem}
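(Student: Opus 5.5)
Since the value spectrum is defined intrinsically, I would first note that $S(\mathcal D)$ depends only on the isomorphism class of $\mathcal D$: an isometric group isomorphism $\psi:L_1\to L_2$ maps $L_1$ bijectively onto $L_2$ with $\|\psi(x)\|_2=\|x\|_1$. This lets me compute $S(\mathcal D)$ on the representative $\Lambda(a)=(L_a,\|\cdot\|_a)$ from Theorem \ref{xqthm1}. By definition, for $q\in L_a\subset\Q$ embedded in $L_a\otimes\R\cong\R$,
\[
\|q\|_a=|a_\infty|\,|q|=|a_\infty q|,
\]
so $S(\mathcal D)=\{|a_\infty q|\mid q\in L_a\}=\{|x|\mid x\in I_a\}$, with $I_a=a_\infty L_a$ as in Definition \ref{adinv}.

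Next I would show that $\{|x|\mid x\in I_a\}=I_a\cap[0,\infty)$. The inclusion $\supseteq$ is immediate: a non-negative $x\in I_a$ equals $|x|$. For $\subseteq$, I use that $I_a$ is a subgroup of $\R$, being the image of the group $L_a$ under multiplication by $a_\infty$. Hence $x\in I_a$ implies $-x\in I_a$, so $|x|\in I_a$, and $|x|\ge 0$. This gives the first identity.

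For the converse, I would use symmetry again. Every $x\in I_a$ is either non-negative, so $x\in S(\mathcal D)$, or has $-x\in I_a\cap[0,\infty)=S(\mathcal D)$. Conversely, $S(\mathcal D)\cup(-S(\mathcal D))\subset I_a$ because $I_a$ is closed under negation.

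I would also remark on the degenerate case $a_\infty=0$, where everything is $\{0\}$ and consistent. As a byproduct, $I(\mathcal D)$, the group generated by $S(\mathcal D)$, equals $I_a$, which yields the alternative proof of invariance. There is no real obstacle here. The only point needing care is that $S$ is an isomorphism invariant, which justifies computing it on the adelic representative; this holds because isomorphisms are isometries by definition.
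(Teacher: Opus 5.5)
Your proposal is correct and follows the paper's proof: compute the norms on the adelic representative as $\|q\|_a=|a_\infty q|$, so $S(\mathcal D)=\{|x|\mid x\in I_a\}$, then use that $I_a$ is a subgroup (hence symmetric) to get both identities. Your explicit remark that $S(\mathcal D)$ is an isomorphism invariant, which is what justifies computing it on $\Lambda(a)$, is a step the paper leaves implicit.
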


\begin{proof}
Let $\mathcal D=(L,\|\cdot\|)$ be the class represented by $a=(a_f,a_\infty)\in\A$.
By construction (cf.\ Definition~\ref{adinv}), the finite component $a_f$ determines a
rank-$1$ subgroup
\[
L_a:=\{q\in\Q \mid q\,a_f\in\widehat\Z\}\subset \Q,
\]
and the associated invariant subgroup of $\R$ is
\[
I_a=a_\infty L_a\subset \R.
\]
Under the identification of $L_\R$ with $\R$ coming from the chosen representative
$a$ (i.e.\ the embedding $L\hookrightarrow \Q$ and the scaling factor $a_\infty$),
the norm is given by
\[
\|q\|=|a_\infty q|\qquad (q\in L_a).
\]
Therefore the value spectrum of $\mathcal D$ is
\[
S(\mathcal D)=\{\|q\|\mid q\in L_a\}
=\{|a_\infty q|\mid q\in L_a\}
=\{|x|\mid x\in I_a\}.
\]
Since $I_a$ is a subgroup of $\R$, it is stable under $x\mapsto -x$, hence
\[
\{|x|\mid x\in I_a\}=I_a\cap[0,\infty),
\]
which proves the first identity.

For the second statement, the symmetry $I_a=-I_a$ implies that every element
$x\in I_a$ is either in $I_a\cap[0,\infty)$ or in its negative. Using the first part,
this yields
\[
I_a=(I_a\cap[0,\infty))\cup -(I_a\cap[0,\infty))
=S(\mathcal D)\cup(-S(\mathcal D)).
\]
\end{proof}

The preceding lemma shows that an arithmetic divisor determines a \emph{canonical} value
spectrum inside $\R$. Next we  show that
this invariant is complete, provided the metric is not identically zero.

\begin{defn}[Non-degenerate divisor]
An arithmetic divisor $\mathcal D=(L,\|\cdot\|)$ on $\spzb$ is  \emph{non-degenerate} if the semi-norm $\|\cdot\|$ is not identically zero.
\end{defn}

\begin{rem}
If $\mathcal D$ is represented by an adele $a \in \A$, non-degeneracy corresponds to the condition $a_\infty \neq 0$. In this case, the map $L \to \R$ induced by the norm is injective, and $L$ is isomorphic to the subgroup $I(\mathcal D) \subset \R$.
\end{rem}

\begin{thm}[Reconstruction]\label{rec}
Let $\mathcal{D}_1, \mathcal{D}_2$ be two non-degenerate arithmetic divisors.
Then,  $\mathcal{D}_1$ and $\mathcal{D}_2$ are isomorphic if and only if their value spectra are identical:
\[
[\mathcal{D}_1] = [\mathcal{D}_2] \iff S(\mathcal{D}_1) = S(\mathcal{D}_2).
\]
\end{thm}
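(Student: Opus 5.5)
The forward implication is immediate. If $\psi: L_1\to L_2$ is an isometric isomorphism, then $\|\psi(x)\|_2=\|x\|_1$ for all $x\in L_1$. Since $\psi$ is a bijection, $S(\mathcal D_1)=\{\|x\|_1\}=\{\|\psi(x)\|_2\}=S(\mathcal D_2)$.

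For the converse we pass to adelic representatives. By Theorem~\ref{xqthm1}, write $\mathcal D_i$ as the class of $\Lambda(a^{(i)})$ for adeles $a^{(i)}=(a^{(i)}_f,a^{(i)}_\infty)$. Non-degeneracy gives $a^{(i)}_\infty\neq 0$. By Lemma~\ref{lem:spectrum}, the equality $S(\mathcal D_1)=S(\mathcal D_2)$ implies $I_{a^{(1)}}=S(\mathcal D_1)\cup(-S(\mathcal D_1))=I_{a^{(2)}}$. Call this common subgroup $I\subset\R$.

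The key construction is a canonical model of each divisor living inside $\R$. Consider the map $\iota_i: L_{a^{(i)}}\to I$ given by $q\mapsto a^{(i)}_\infty q$. It is a group isomorphism: it is surjective by the definition of $I_{a^{(i)}}$, and injective because $a^{(i)}_\infty\neq0$. This is exactly where non-degeneracy is used; for the zero semi-norm the spectrum is $\{0\}$ and carries no information. Under $\iota_i$, the norm becomes $\|q\|_{a^{(i)}}=|a^{(i)}_\infty q|=|\iota_i(q)|$. So each $\mathcal D_i$ is isomorphic to the divisor $(I,|\cdot|)$, where $|\cdot|$ is the restriction of the standard absolute value on $\R$, extended to $I\otimes\R\cong\R$. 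Then $\psi=\iota_2^{-1}\circ\iota_1: L_{a^{(1)}}\to L_{a^{(2)}}$ is a group isomorphism satisfying $\|\psi(q)\|_{a^{(2)}}=|\iota_1(q)|=\|q\|_{a^{(1)}}$, which gives $[\mathcal D_1]=[\mathcal D_2]$. Concretely, $\psi$ is multiplication by the ratio $a^{(1)}_\infty/a^{(2)}_\infty$, which must be rational since it carries $L_{a^{(1)}}\subset\Q$ into $\Q$.

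The main point to check carefully is that an isometry on $L$ gives an isometry for the semi-norm on $L_\R$, as required in the definition of isomorphism. This holds because both semi-norms are proportional to $|\cdot|$, and $\psi$ extends $\R$-linearly as multiplication by a nonzero real scalar, so agreement on a single nonzero element of $L$ forces agreement on all of $L_\R$.

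Alternatively, one can argue intrinsically without adeles. For a non-degenerate $\mathcal D=(L,\|\cdot\|)$, choose $\phi:L_\R\cong\R$ with $\|v\|=\lambda|\phi(v)|$, $\lambda>0$. The map $x\mapsto\lambda\phi(x)$ embeds $L$ into $\R$ with image $S(\mathcal D)\cup(-S(\mathcal D))$, and the same argument goes through.
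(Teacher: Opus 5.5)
Your proof is correct and follows essentially the same route as the paper: the converse uses Lemma~\ref{lem:spectrum} to obtain a common subgroup $I\subset\R$, then the norm-induced isomorphisms $\iota_k\colon L_k\to I$, composed as $\iota_2^{-1}\circ\iota_1$. Your additional checks — where non-degeneracy is used, why an isometry on $L$ extends to $L_\R$, and why the scaling factor is rational — are accurate and slightly more careful than the paper's write-up.
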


\begin{proof}
``$\Rightarrow$'' Isomorphisms are isometries, thus they preserve the set of norms.

``$\Leftarrow$'' Suppose $S(\mathcal{D}_1) = S(\mathcal{D}_2)$.
By Lemma \ref{lem:spectrum}, the invariant subgroups generated by the spectra coincide:
\[
I(\mathcal{D}_1) := S(\mathcal{D}_1) \cup -S(\mathcal{D}_1) = S(\mathcal{D}_2) \cup -S(\mathcal{D}_2) = I(\mathcal{D}_2).
\]
Let $I \subset \R$ denote this common subgroup. Since the divisors are non-degenerate, $I \neq \{0\}$.
For $k=1,2$, the divisor $\mathcal{D}_k = (L_k, \|\cdot\|_k)$ is canonically isomorphic to the divisor $(I, |\cdot|_{\text{std}})$ defined by the subgroup $I$ with the standard absolute value induced from $\R$.
Indeed, the norm $\|\cdot\|_k$ induces an embedding $\iota_k: L_k \hookrightarrow \R$ whose image is $I$. The map $\iota_k$ constitutes an isomorphism of arithmetic divisors:
\[
\iota_k: (L_k, \|\cdot\|_k) \xrightarrow{\sim} (I, |\cdot|_{\text{std}}).
\]
Consequently, we have the isomorphism $\iota_2^{-1} \circ \iota_1: \mathcal{D}_1 \xrightarrow{\sim} \mathcal{D}_2$.
\end{proof}

\subsection{Product of value spectra}

Finally, we describe the product operation in terms of the invariant subgroups of $\R$.
Let $I_1, I_2 \subset \R$ be the invariant subgroups associated to two arithmetic divisors on $\Pic(\spzb)$. The invariant subgroup of the product is the subgroup generated by the product of the sets:
\[
I_1 \cdot I_2 := \langle xy \mid x \in I_1, y \in I_2 \rangle.
\]
In terms of the value spectra $S_i = I_i \cap [0, \infty)$, the spectrum of the product is the closure of the set of products under addition:
\[
S(\mathcal{D}_1 \cdot \mathcal{D}_2) = \{ |z| \mid z \in I_1 \cdot I_2 \}.
\]
This confirms that the \emph{reconstruction map} 
\[
\mathcal{D} \mapsto I(\mathcal{D})= S(\mathcal{D}) \cup -S(\mathcal{D})
\]
is a monoid homomorphism from the arithmetic Picard $\Pic(\spzb)$ to the monoid of subgroups of $\R$ (under the product operation).


\section{The arithmetic Jacobian $\Jac(\spzb)$}\label{jacob}

We introduce the arithmetic Jacobian of the completed curve
$\spzb$.
While the arithmetic Picard monoid $\Pic(\spzb)$ parametrizes rank-$1$
torsion-free abelian groups endowed with a (possibly degenerate) metric at the
archimedean place, the Jacobian is obtained by identifying divisors that differ only
by a rescaling of this metric.
From a geometric perspective, this passage amounts to obtain the Jacobian by quotienting the Picard group by the subgroup generated by the  divisor of degree $1$ used to define the Abel-Jacobi map. In our set-up this  is the subgroup $\R_+^\times\subset \Pic(\spzb)$ and  the quotient monoid $\Jac(\spzb)$ is the space
of orbits of the archimedean scaling flow.
We describe the resulting structure explicitly, endow it with a natural monoid law,
and explain how the arithmetic Abel--Jacobi map extends to a map from $\spzb$ to  $\Jac(\spzb)$.

\subsection{Definition of $\Jac(\spzb)$}

The multiplicative group of positive real numbers $\R_+^\times$ acts naturally on the
arithmetic Picard monoid $\Pic(\spzb)$ by rescaling the metric at infinity.
Explicitly, for an arithmetic divisor $\mathcal D=(L,\|\cdot\|)$ and $\lambda\in\R_+^\times$
one sets
\[
\lambda\cdot(L,\|\cdot\|):=(L,\lambda\|\cdot\|).
\]
Under the adelic identification, this action corresponds to multiplication of the
archimedean component $a_\infty$ by $\lambda$.

\begin{defn}
The \emph{arithmetic Jacobian} of $\spzb$ is  the quotient
\[
\Jac(\spzb)=\Pic(\spzb)\big/\R_+^\times.
\]
\end{defn}
This construction fits into an exact sequence of commutative monoids
$$
1\to \R_+^\times\to \Pic(\spzb)\to \Jac(\spzb)\to 1
$$
which is the arithmetic analogue of the classical decomposition of the Picard group
of a complete curve over a finite field into its Jacobian part and a degree
component.

\subsection{Structure of $\Jac(\spzb)$}

The structure of  $\Jac(\spzb)$ is governed by the behavior of the metric under the
scaling action.
Two qualitatively distinct cases arise.
\begin{enumerate}[leftmargin=*, labelindent=0pt]
\item \emph{Finite type (flow lines).}
If the metric $\|\cdot\|$ is non-zero, its $\R_+^\times$-orbit consists of all non-zero
metrics on the real line $L_\R$.
After passing to the quotient, the resulting class is determined solely by the
isomorphism class of the underlying group $L$.
We denote such an element simply by $L$.

\item \emph{Infinite type (fixed points).}
If the metric is the zero semi-norm, it is fixed by the $\R_+^\times$-action.
The corresponding class records both the group $L$ and the degeneracy of the metric.
We denote such an element by $L_\infty$.
\end{enumerate}

As a consequence, one obtains a canonical identification
\[
\Jac(\spzb)\;\cong\; \Pic(\spz)\times\{0,\infty\},
\]
where $\Pic(\spz)$ is the set of isomorphism classes of torsion-free rank-$1$
abelian groups.
The second factor records the archimedean valuation: $0$ for non-degenerate metrics
and $\infty$ for the degenerate (zero) metric.

\subsection{Monoid structure}

The commutative monoid structure on $\Pic(\spzb)$ induced by tensor product
descends naturally to $\Jac(\spzb)$.
On the group component it is given by tensor product, while the archimedean component $n_\infty\in\{0,\infty\}$
follows the logic of an absorbing value:
\begin{itemize}[leftmargin=*, labelindent=0pt]
\item[-] $H\cdot K = H\otimes_\Z K$ \hfill ($0+0=0$)
\item[-] $H\cdot K_\infty = (H\otimes_\Z K)_\infty$ \hfill ($0+\infty=\infty$)
\item[-] $H_\infty\cdot K_\infty = (H\otimes_\Z K)_\infty$ \hfill ($\infty+\infty=\infty$).
\end{itemize}

Thus $\Jac(\spzb)$ inherits a natural commutative monoid structure compatible with
its interpretation as a quotient of the arithmetic Picard monoid.

\subsection{The  extended Abel-Jacobi map}\label{sheaf}

The arithmetic Abel--Jacobi map  constructed earlier (Definition~\ref{aAJ}) extends to the Jacobian
$\Jac(\spzb)$.
From the adelic point of view, this map assigns to each point of the completed curve
$\spzb$ the orbit it generates under the archimedean scaling action.
In this way, points of $\spzb$ are classified by their reduced arithmetic
invariants

\begin{table}[H]
\centering
{\fontsize{10.3}{12.3}\selectfont
\begin{tabular}{c|c|c|c}
\textbf{Point in $\spzb$} & \textbf{adelic representative} & \textbf{reduced element} & \textbf{geometric meaning} \\
\hline
generic point $\eta$ & $(1_f, 1_\infty)$ & $\Z$ & generic orbit (flow line) \\
\hline
$p<\infty$ & $(e_p, 1_\infty)$ & $\Z[1/p]$ & periodic orbit $C_p$ \\
\hline
$p=\infty$ & $(1_f, 0_\infty)$ & $\Z_\infty$ & archimedean singularity \\
\hline
\end{tabular}
}
\caption{The image of the curve in the arithmetic Jacobian}\label{table}
\end{table}
In formulas one writes:
\begin{equation}\label{extAJ}
\Theta: \spzb \longrightarrow \Jac(\spzb), \quad\Theta(x)= \begin{cases}(\mathbf{Z}[1 / p],\|\cdot\|\neq 0) & \text { if } x=p<\infty  \\ (\mathbf{Z}, 0)=\Z_\infty & \text { if } x=\infty \\ (\mathbf{Z},\|\cdot\|\neq 0) & \text { if } x=\eta.\end{cases}
\end{equation}


\section{Sheaves of modules on $\spzb$}\label{sectsheaf}

In this section we relate the arithmetic Picard monoid $\Pic(\spzb)$ to a
sheaf-theoretic framework inspired by the theory of $\mathbf F_1$-algebras developed
in \cite{CC2}.
Our goal is to show that every arithmetic divisor on the completed curve
$\spzb$ naturally determines a sheaf of modules over the structure sheaf of $\spzb$.
This construction extends the classical correspondence between divisors and
invertible sheaves, while simultaneously encoding both the finite arithmetic data
and the archimedean metric constraint.

\subsection{The Structure sheaf}

We regard $\spzb$ as a topological space whose open sets are complements of
finite sets of closed points (finite primes and possibly the point at infinity).
The structure sheaf $\mathcal O_{\spzb}$ corresponds to the trivial arithmetic
divisor
\[
\mathbf{1}=(\Z,|\cdot|_\infty),
\]
where $|\cdot|_\infty$ denotes the standard absolute value on $\R$.

For an open subset $U\subset\spzb$, the algebra of sections is defined as a
sub-$\mathbf F_1$-algebra of $H\Q$ as follows.

\begin{itemize}[leftmargin=*, labelindent=0pt]
\item If $\infty\notin U$, then
\[
\mathcal O_{\spzb}(U)=H\Z_U,
\]
where $\Z_U=\Z[S^{-1}]$ is the localization of $\Z$ where the set of primes is
$S=\Spec\Z\setminus U$.

\item If $\infty\in U$, we impose the archimedean metric condition:
\[
\mathcal O_{\spzb}(U)(X)
=
\left\{
\phi\in H\Z_{U\setminus\{\infty\}}(X)
\;\middle|\;
\sum_{x\in X\setminus\{*\}}|\phi(x)|_\infty\le 1
\right\}.
\]
\end{itemize}

This definition recovers the structure sheaf introduced in
\cite{CC2}, Proposition~6.3.

\subsection{Sheaves associated to arithmetic divisors}

We now extend the above construction  to an arbitrary
arithmetic divisor
\[
\mathcal D=(L,\|\cdot\|)\in\Pic(\spzb).
\]
The finite part of the data is encoded in the subgroup $L\subset\Q$, while the
archimedean contribution is governed by the semi-norm $\|\cdot\|$.

We first associate to $L$ a sheaf of abelian groups $\mathcal L$ on the Zariski
topology of $\Spec\Z$.
For an open subset $V\subset\Spec\Z$, the group $\mathcal L(V)$ consists of those
elements of $L$ that are regular on $V$, i.e.\ whose denominators involve only
primes in the complement $V^c$.

\begin{prop}
Let $\mathcal D=(L,\|\cdot\|)$ be an arithmetic divisor on $\spzb$.
The following construction defines a sheaf
$\mathcal O(\mathcal D)$ of $\mathcal O_{\spzb}$-modules on $\spzb$.
\begin{enumerate}
\item(Finite part)
If an open set $U$ does not contain $\infty$, set
\[
\mathcal O(\mathcal D)(U):=H\mathcal L(U).
\]

\item(Infinite part)
If $U$ contains $\infty$, define
\[
\mathcal O(\mathcal D)(U)(X)
:=
\left\{
\phi\in H\mathcal L(U\cap\Spec\Z)(X)
\;\middle|\;
\sum_{x\in X\setminus\{*\}}\|\phi(x)\|\le 1
\right\}.
\]
\end{enumerate}
\end{prop}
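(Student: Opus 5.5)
We need to check three things: that $\mathcal O(\mathcal D)$ is a presheaf of $\Gamma$-sets (pointed functors), that it satisfies the sheaf axiom on $\spzb$, and that it is a module over $\mathcal O_{\spzb}$. Throughout, $H M$ is the Eilenberg--MacLane functor, with $HM(X)=\{\phi:X\to M\mid \phi(*)=0\}$, and the action of a map $f:X\to Y$ is $(f_*\phi)(y)=\sum_{f(x)=y}\phi(x)$.

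\emph{Sub-$\Gamma$-set property.} For $\infty\notin U$ there is nothing to check, since $V\mapsto \mathcal L(V)$ is a subgroup of $L\subset\Q$ and $H$ is functorial. For $\infty\in U$, the set defined in (2) must be stable under $f_*$. This follows from the triangle inequality for the semi-norm:
\[
\sum_{y\ne *}\|f_*\phi(y)\|\le \sum_{x\ne *}\|\phi(x)\|\le 1 .
\]
The basepoint condition is also preserved, because terms mapped to $*$ are simply discarded.

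\emph{Restrictions and sheaf axiom.} Restrictions for $U'\subset U$ come from the inclusions $\mathcal L(U\cap\Spec\Z)\subset\mathcal L(U'\cap\Spec\Z)$, since allowing more denominators gives a larger group. When $U\ni\infty$ and $U'\not\ni\infty$, we simply forget the metric condition. The sheaf property is checked objectwise in $X$.
\begin{itemize}
\item On $\Spec\Z$, $\mathcal L$ is a sheaf of subgroups of the constant group $L$. Regularity at a prime $p$ is a local condition, and every nonempty open set contains $\eta$, so gluing amounts to intersecting subgroups of $L$. Hence $H\mathcal L$ is a sheaf.
\item If a cover of $U$ has some member $U_i\ni\infty$, the metric condition is already imposed on that member, and local sections agree in $L$. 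So a glued section satisfies the inequality because its restriction to $U_i$ does.
\end{itemize}
Here I will use that every nonempty open set is dense and that all groups sit inside $L$, so restriction maps are injective.

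\emph{Module structure.} The action $\mathcal O_{\spzb}(U)\wedge\mathcal O(\mathcal D)(U)\to\mathcal O(\mathcal D)(U)$ comes from multiplication $\Z_U\times\mathcal L(U)\to\mathcal L(U)$ on the finite part. I take this to be the $L\otimes\Q\cong\Q$ scalar action, which lands in $\mathcal L(U)$ because $\mathcal L(U)$ is a $\Z_U$-module. For a smash product $X\wedge Y$ and $\psi\in\mathcal O_{\spzb}(U)(X)$, $\phi\in\mathcal O(\mathcal D)(U)(Y)$, the key estimate is
\[
\sum_{(x,y)}\|\psi(x)\phi(y)\|=\Bigl(\sum_x|\psi(x)|_\infty\Bigr)\Bigl(\sum_y\|\phi(y)\|\Bigr)\le 1 .
\]
It uses the homogeneity $\|q v\|=|q|_\infty\|v\|$ of the semi-norm, which holds because $\|\cdot\|$ is proportional to $|\cdot|$. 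Associativity and unitality are inherited from the ring and module structure of $\Q$, and compatibility with restrictions is clear.

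The main obstacle is making the module structure rigorous in the $\Gamma$-set / $\F1$ formalism of \cite{CC2}, namely the exact form of the smash-product pairing and its naturality in $X$ and $Y$. I would model it directly on the proof of \cite{CC2}, Proposition~6.3, for $\mathcal O_{\spzb}$, replacing $|\cdot|_\infty$ by $\|\cdot\|$. The degenerate case $\|\cdot\|=0$ is trivially included, since the condition becomes vacuous.
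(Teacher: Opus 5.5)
Your proposal is correct and matches the paper's proof. The paper's argument consists entirely of the smash-product estimate $\sum_{(x,y)}\|\psi(x)\phi(y)\|=\bigl(\sum_x|\psi(x)|_\infty\bigr)\bigl(\sum_y\|\phi(y)\|\bigr)\le 1$, which follows from homogeneity of the semi-norm. Your extra checks fill in what the paper takes for granted: stability of the unit-ball condition under $\Gamma$-maps via the triangle inequality, and the gluing argument.

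There is one small slip. $\mathcal L(V)$ must be read as the localization $L\otimes\Z_V\subset L\otimes\Q$, not as a subgroup of $L$, because it has to be a $\Z_V$-module (e.g.\ $\mathcal L(V)=\Z_V$ for $\mathcal D=\mathbf 1$). Your gluing argument still works verbatim inside $L\otimes\Q$.
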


\begin{proof}
The functor $H\mathcal L(U\cap\Spec\Z)$ provides the underlying $\Gamma$-set
structure.
It remains to check that the metric condition at infinity is compatible with the
module structure over $\mathcal O_{\spzb}$.

Let $\psi\in\mathcal O_{\spzb}(U)(Y)$ and
$\phi\in\mathcal O(\mathcal D)(U)(X)$.
Their smash product acts on $z=(y,x)$ by multiplication.
By definition of the semi-norm,
\[
\|\psi(y)\phi(x)\|=|\psi(y)|_\infty\,\|\phi(x)\|.
\]
Summing over the smash product yields
\[
\sum_{y,x}\|\psi(y)\phi(x)\|
=
\left(\sum_y|\psi(y)|_\infty\right)
\left(\sum_x\|\phi(x)\|\right)
\le 1\cdot 1=1.
\]
Thus the metric constraint is preserved, and $\mathcal O(\mathcal D)$ defines a
sheaf of $\mathcal O_{\spzb}$-modules.
\end{proof}

\begin{rem}
This construction uniformly accounts for both finite and infinite types in the
arithmetic Picard. Precisely:\vspace{.03in}

(Finite type)
If the semi-norm $\|\cdot\|$ is non-degenerate, the condition
$\sum\|\phi(x)\|\le 1$ defines a unit ball of sections, generalizing the classical
Arakelov metric condition.\vspace{.03in}

(Infinite type)
If $\|\cdot\|=0$, the inequality $\sum\|\phi(x)\|\le 1$ is automatically satisfied.
Hence for a divisor of infinite type $H_\infty$, the sheaf
$\mathcal O(H_\infty)$ imposes no constraint at infinity, corresponding
geometrically to allowing a pole of arbitrary order at $\infty$.

\end{rem}


\section{Arithmetic pullbacks and the geometry of the Riemann sector}\label{sectpullback}

This section develops the arithmetic analogue of a classical geometric construction:
the pullback of an abelian étale covering of a Jacobian along the Abel--Jacobi map.
Our aim is to interpret the ``visible part'' $\mathcal X_\Q$ of the Riemann sector $X_\Q$ as a geometric covering of the
arithmetic curve $\spzb$, and to describe its fibers in terms of arithmetic divisors.

The guiding principle is that the arithmetic Picard monoid $\Pic(\spzb)$ plays the
role of a  covering space of $\Jac(\spzb)$, while the extended Abel--Jacobi map determines
a pullback whose fibers encode the dynamical and group--theoretic features of the
scaling action.  

We begin by recalling the classical geometric prototype over a finite
field, and then transpose it to the arithmetic setting.  The remainder of the section
is devoted to a detailed analysis of the resulting fibers.

\subsection{The classical geometric prototype}

We briefly recall the standard construction of abelian unramified covers of a curve
over a finite field, which will serve as a geometric template for the arithmetic case.\vspace{.03in}

Let $C$ be a smooth projective curve of genus $g$ over a finite field $k$. Let $J = \operatorname{Jac}(C)$ be its Jacobian variety.
Class field theory for the function field $k(C)$ establishes a correspondence between abelian unramified extensions of $k(C)$ and isogenies of the Jacobian $J$.

Fix a prime $\ell$ distinct from the characteristic of $k$. For any integer $n \ge 1$, consider the multiplication by $\ell^n$ map on the Jacobian:
\[
[\ell^n]: J \longrightarrow J, \quad x \longmapsto \ell^n x.
\]
This is a finite étale covering of degree $\ell^{2gn}$, whose Galois group is the group
of $\ell^n$--torsion points $J[\ell^n](k)$, acting by translations:
\[
\tau \cdot x = x + \tau, \quad \text{for } \tau \in J[\ell^n],~ x \in J.
\]
To obtain a covering of the curve $C$, one pulls back this covering of the Jacobian
along the Abel--Jacobi embedding $\alpha : C \hookrightarrow J$ defined with respect
to a base point $P$:
\[
\begin{tikzcd}
C_n \arrow[r] \arrow[d] & J \arrow[d, "{\times \ell^n}"] \\
C \arrow[r, "\alpha"] & J
\end{tikzcd}
\]
The resulting curve $C_n$ is an étale cover of $C$, which realizes the maximal abelian
unramified covering of exponent $\ell^n$.

\subsection{The arithmetic pullback}\label{arpull}

We now transpose the preceding construction to the arithmetic curve $\spzb$.
Each ingredient of the geometric picture admits a natural arithmetic counterpart:
\begin{itemize}[leftmargin=*, labelindent=0pt]
  \item[-] the Jacobian $J$ is replaced by the arithmetic Jacobian $\Jac(\spzb)$;
  \item[-] the isogeny $[\ell^n] : J \to J$ is replaced by the projection
        $\pi : \Pic(\spzb) \to \Jac(\spzb)$;
  \item[-] The role of the Galois group of translations $J[\ell^n]$ is replaced by the continuous scaling
        group $\R_+^\times$, the connected component of the idele class group;
  \item[-] translations on the Jacobian are given by \emph{multiplicative scaling} of
        arithmetic divisors.
\end{itemize}

Pulling back the adelic covering along the extended Abel--Jacobi map
$\Theta$~\eqref{extAJ}, we obtain the diagram:
\begin{equation}\label{2tow}
\begin{tikzcd}
\mathcal{X}_\Q \arrow[r] \arrow[d] & \Pic(\spzb) \arrow[d, "\pi"] \\
\spzb \arrow[r, "\Theta"] & \Jac(\spzb)
\end{tikzcd}
\end{equation}
The resulting space $\mathcal{X}_\Q$ will be referred to as the ``\emph{visible part}''
of the Riemann sector $X_\Q$, now realized geometrically as a covering of the
arithmetic curve.  Its fibers encode the periodic orbits and fixed points of the
arithmetic flow, which we analyze next.

\subsection{Group structure on the periodic orbits}

We first study the fibers lying over closed points of $\spzb$, which correspond
to periodic orbits of the scaling action.

Let $p < \infty$ be a finite prime, viewed as a closed point of $\spzb$, and let
\[
H_p = \Z[1/p]
\]
be its image in the Jacobian under the Abel--Jacobi map $\Theta$ \eqref{extAJ}.  The fiber of $\pi$ (as in \eqref{2tow}) over
$H_p$, denoted $C_p$, consists of all arithmetic divisors whose underlying group is
isomorphic to $\Z[1/p]$ and whose metric is non--zero.  By 
Theorem~\ref{rec}, this fiber is identified with
\[
C_p = \{\, I \subset \R \mid I \cong \Z[1/p] \text{ as abelian groups} \,\}.
\]

\subsubsection{Idempotence and the induced group law}

The monoid structure of the arithmetic Picard induces a binary operation on each fiber $C_p$.
On subgroups of $\R$, this operation is given by
\[
I_1 \cdot I_2 = \langle xy \mid x \in I_1,\ y \in I_2 \rangle .
\]

\begin{prop}
The set $C_p$ is an abelian group with respect to this operation.
\end{prop}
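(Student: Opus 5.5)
The plan is to parametrize $C_p$ explicitly and transport the group structure from $\R_+^\times/p^{\Z}$. Every subgroup $I\subset\R$ isomorphic to $\Z[1/p]$ has the form $I=\lambda\,\Z[1/p]$ for some $\lambda\in\R_+^\times$. Indeed, an abstract isomorphism $\Z[1/p]\to I$ is determined by the image $\lambda$ of $1$, because $\Z[1/p]$ is generated by the $p^{-n}$ and $p^n\cdot\phi(p^{-n})=\lambda$ forces $\phi(p^{-n})=\lambda p^{-n}$. Replacing $\lambda$ by $-\lambda$ if necessary, we may take $\lambda>0$. Moreover $\lambda\Z[1/p]=\mu\Z[1/p]$ if and only if $\lambda/\mu\in\Z[1/p]^\times\cap\R_+^\times=p^{\Z}$, since the units of the ring $\Z[1/p]$ are $\pm p^{\Z}$. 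Hence the map
\[
\R_+^\times/p^{\Z}\longrightarrow C_p,\qquad \lambda\longmapsto \lambda\,\Z[1/p]
\]
is a bijection. Equivalently, by Theorem~\ref{xqthm1}, $C_p$ is the set of classes of adeles $(e_p,\lambda)$ with the convention of Table~\ref{table}.

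Next we compute the operation. For $I_1=\lambda\Z[1/p]$ and $I_2=\mu\Z[1/p]$ we have
\[
I_1\cdot I_2=\langle \lambda\mu\, xy\mid x,y\in\Z[1/p]\rangle=\lambda\mu\,\Z[1/p]\cdot\Z[1/p]=\lambda\mu\,\Z[1/p].
\]
This uses that $\Z[1/p]$ is a subring of $\Q$ containing $1$, so $\Z[1/p]\cdot\Z[1/p]=\Z[1/p]$; this is the idempotence $H_p\otimes H_p\cong H_p$ already established. The product is therefore well defined on $C_p$, the set is closed under it, and under the bijection it corresponds to multiplication in $\R_+^\times/p^{\Z}$.

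The group axioms are then immediate from the bijection. Associativity and commutativity are inherited from $\R_+^\times$. The identity is $\Z[1/p]$ itself, the class of $\lambda=1$. The inverse of $\lambda\Z[1/p]$ is $\lambda^{-1}\Z[1/p]$. Applying $\log$ gives $C_p\cong\R/\Z\log p$.

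The only point needing care is the first step: every $I\cong\Z[1/p]$ inside $\R$ is a real multiple of $\Z[1/p]$, and the multiplier is determined exactly modulo $p^{\Z}$. The divisibility argument above settles the first part, and the computation of units of $\Z[1/p]$ settles the second. One should also remark that the identity of $C_p$ is not the identity $\Z$ of the ambient monoid, since $\Z$ does not lie in $C_p$. Thus $C_p$ is a group only with respect to its own idempotent $\Z[1/p]$, as the idempotence of $\Theta(p)$ suggests.
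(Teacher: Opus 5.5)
Your proof is correct and follows essentially the paper's route: both rest on writing every $I\in C_p$ as $\lambda\,\Z[1/p]$ and on the idempotence $\Z[1/p]\cdot\Z[1/p]=\Z[1/p]$, with identity $\Z[1/p]$ and inverse $\lambda^{-1}\Z[1/p]$. The only difference is organizational. You establish the bijection with $\R_+^\times/p^{\Z}$ first and transport the structure, which also makes closure ($I_1\cdot I_2=\lambda\mu\,\Z[1/p]$) and the form $I=\lambda\,\Z[1/p]$ fully explicit, whereas the paper checks the axioms directly and derives the circle identification afterwards.
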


\begin{proof}
Let $I_1, I_2 \in C_p$. Since $I_1 \cong I_2 \cong \Z[1/p]$, both are modules over the ring $\Z[1/p]$. Their product $I_1 \cdot I_2$ is the image of the tensor product $I_1 \otimes_\Z I_2$ inside $\R$.
Using the idempotence of the localization $\Z[1/p] \otimes_\Z \Z[1/p] \cong \Z[1/p]$, we see that $I_1 \cdot I_2$ is again a rank-1 module over $\Z[1/p]$, hence isomorphic to $\Z[1/p]$. Thus $I_1 \cdot I_2 \in C_p$.

Consider the canonical embedding $I_0 = \Z[1/p] \subset \R$ (corresponding to $1\mapsto 1$).  Since $I$ is a $\Z[1/p]$-module, for any $I \in C_p$ we have:
\[
I_0 \cdot I = \{ \lambda x \mid \lambda \in \Z[1/p], x \in I \} = I.
\]
Thus $I_0$ is the neutral element.

Any $I \in C_p$ is of the form $I = \lambda I_0$ for some $\lambda \in \R^\times$.
Let $I^{-1} = \lambda^{-1} I_0$. Then:
\[
I \cdot I^{-1} = (\lambda I_0) \cdot (\lambda^{-1} I_0) = (\lambda \lambda^{-1}) (I_0 \cdot I_0) = 1 \cdot I_0 = I_0.
\]
Thus every element is invertible.
\end{proof}

\subsubsection{Identification with the circle group}

The parametrization $I_\lambda = \lambda \Z[1/p]$ defines a surjective homomorphism
$\R^\times \to C_p$.  Its kernel is the stabilizer of the neutral element $I_0$:
\[
\operatorname{Stab}(I_0) = \{ \lambda \in \R^\times \mid \lambda \Z[1/p] = \Z[1/p] \} = (\Z[1/p])^\times = \{ \pm p^k \mid k \in \Z \}.
\]
Restricting to positive scalars (that corresponds to the connected component of the identity) yields a canonical isomorphism of topological groups
\[
C_p \cong \R_+^\times / p^\Z \cong \R / (\log p)\Z ,
\]
so that each fiber $C_p$ is canonically a circle group, and the neutral element of the fiber is the specific point on the orbit corresponding to the standard embedding of the rational numbers.

\subsection{The fiber at infinity}

We now consider the fiber over the archimedean point $\infty \in \spzb$.
Its image under $\Theta$ is the reduced element $\Z_\infty$, corresponding to the
group $\Z$ equipped with the zero semi--norm.

Accordingly, the fiber $C_\infty$ consists of arithmetic divisors with underlying
group $\Z$ and vanishing metric.  There is a unique such divisor:
\[
C_\infty = \{ (\Z, 0) \}.
\]
Geometrically, this point corresponds to the unique \emph{fixed point of the scaling flow}.

\subsubsection{Absorption by the fixed point}

Let $I_\infty = \{0\} \subset \R$ denote the subgroup corresponding to the point
at infinity.  For any $p < \infty$ and any $I_p \in C_p$, one has
\[
I_p \cdot I_\infty = \langle xy \mid x \in I_p, y \in \{0\} \rangle = \{0\} = I_\infty.
\]
Thus
\[
C_p \cdot C_\infty = C_\infty ,
\]
expressing the fact that the archimedean fiber is an absorbing element for the
monoid structure.  Dynamically, this reflects the role of $\infty$ as a sink for
the scaling flow (\ie the limit of the scaling action as $\lambda \to 0$ leads to the zero semi-norm).

\subsection{The generic fiber}

Finally, we consider the fiber over the generic point $\eta \in \spzb$.
We define its image under $\Theta$ to be the identity element of the Jacobian:
\[
\Theta(\eta) := [\Z] \in \Jac(\spzb) ,
\]
corresponding to the principal adele $(1_f,1_\infty)$.

\subsubsection{Description of the generic fiber}

The fiber $C_\eta$ consists of arithmetic divisors $(L,\|\cdot\|)$ with
$L \cong \Z$ and non--zero metric.  By  Theorem \ref{rec},
\[
C_\eta = \{\, I \subset \R \mid I \cong \Z \,\} = \{ \lambda \Z \mid \lambda \in \R^\times \}.
\]
Any such subgroup is of the form $I = \lambda \Z$ for some $\lambda \in \R^\times$.
Thus, we have an isomorphism of topological groups:
\[
C_\eta \cong \R_+^\times / \{1\} \cong \R_+^\times.
\]
Thus the fiber $C_\eta$ is simply the multiplicative group of positive real numbers (which may be viewed as a single flow line).

\subsubsection{Density and topological meaning}

The generic point $\eta$ is dense in the Zariski topology of $\spzb$.
Correspondingly, in the adelic topology of $\Pic(\spzb) \cong X_\Q$, the fiber
$C_\eta$ consists of principal arithmetic divisors associated to ideles.
By strong approximation, ideles are dense in the adeles, and therefore
\[
\overline{C_\eta} = \Pic(\spzb) .
\]
This identifies $C_\eta$ as the \emph{generic stratum} of the arithmetic Picard space,
linking algebraic density on $\spzb$ with analytic density in the adelic setting.


\section{The universal cover}\label{sectuniv}

We construct and analyze the universal covering objects naturally
associated to the adelic space $Y_\Q$.  The guiding idea is that finite abelian extensions
of $\Q$ admit a geometric incarnation as finite covers of $X_\Q$, in a way that
faithfully reflects the Galois correspondence of class field theory.
After introducing the class field functor and its basic properties, we analyze
the group structure carried by the fibers of the universal cover over the various
points of $\spzb$: finite primes, the archimedean place, and the generic point.

\subsection{The class field functor}\label{sectcft}

We begin by recalling how finite abelian extensions of $\Q$ give rise to geometric
covers of the adelic space $X_\Q$.  This construction realizes class field theory
directly at the level of topological spaces and dynamical systems.

Let $L$ be a finite abelian extension of $\Q$, and let ${}_LG = \operatorname{Gal}(L/\Q)$ be its
Galois group.  Class field theory provides a canonical continuous surjection
\[
\chi_L : \A_f^\times \longrightarrow {}_LG
\]
which is trivial on $\Q_+^\times$.  Restricting to the maximal compact subgroup
$\Zhat^\times \subset \A_f^\times$, we obtain the reciprocity homomorphism
\[
\chi_L : \Zhat^\times \longrightarrow {}_LG .
\]

\begin{defn}
\label{coverdef}
Let $L$ be a finite abelian extension of $\Q$ defined by the character $\chi_L$.
The associated geometric cover $\gamma(L)$ of $X_\Q$ is the projection
\[
\pi_{\chi_L} : X_\Q^{\chi_L} \longrightarrow X_\Q ,
\qquad \gamma(L) := \pi_{\chi_L},
\]
where the total space is the balanced product
\[
X_\Q^{\chi_L} := Y_\Q \times_{\Zhat^\times} {}_LG
               = (Y_\Q \times {}_LG)/\!\sim .
\]
The equivalence relation is induced by the diagonal action of $\Zhat^\times$:
for $u \in \Zhat^\times$, $y \in Y_\Q$, and $g \in {}_LG$,
\[
(yu, g) \sim (y, \chi_L(u) g).
\]
The map $\pi_{\chi_L}$ is induced by the projection
$Y_\Q \to Y_\Q/\Zhat^\times = X_\Q$.
\end{defn}

This construction associates to the  $\Zhat^\times$--bundle
$Y_\Q \to X_\Q$ the corresponding  ${}_LG$--bundle via the class field
homomorphism $\chi_L$.

The following theorem summarizes the fundamental properties of these covers,
establishing a precise correspondence between arithmetic data of the extension
$L/\Q$ and the topological and dynamical features of the space $X_\Q^{\chi_L}$ (singularities, monodromy, orbit decomposition).

\begin{thm}[{\cite[Theorem~3.12]{CC3}}]
\label{main}
Let $L$ be a finite abelian extension of $\Q$ with Galois group ${}_LG$.
\begin{enumerate}
  \item[(i)] \emph{Functoriality.}
  The assignment $L \mapsto \gamma(L)$ defines a contravariant functor
  \[
  \gamma : \mathbf{AbExt}(\Q) \longrightarrow \mathbf{Cov}_{\emph{ab}}(X_\Q)
  \]
  from finite abelian extensions of $\Q$ to finite abelian covers of $X_\Q$.

  \item[(ii)] \emph{Ramification.}
  The cover $\gamma(L)$ is unramified except at a finite set of places $R$,
  consisting of the archimedean place $\infty$ together with the finite primes
  that ramify in the extension $L/\Q$.

  \item[(iii)] \emph{Monodromy.}
  If $p \notin R$, the fiber over the periodic orbit
  $C_p \subset X_\Q$ is a disjoint union of circles.
  The monodromy along $C_p$ is given by the arithmetic Frobenius:
  \[
  \operatorname{Mon}(C_p) = \operatorname{Frob}_p \in {}_LG .
  \]

  \item[(iv)] \emph{Decomposition.}
  The connected components of $\pi_{\chi_L}^{-1}(C_p)$ are covering circles of $C_p$,
  in canonical bijection with the prime ideals $\mathfrak{p}$ of $L$ lying above $p$.
  The covering degree of each component equals the residue degree
  $f(\mathfrak{p}/p)$.
\end{enumerate}
\end{thm}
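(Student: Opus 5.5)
The proof is a reduction to the classical description of $\Zhat^\times$-torsors over the orbits of $X_\Q$, and I follow the four items in order.

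For (i), I define the map on morphisms. An inclusion $L\subset L'$ of finite abelian extensions gives a restriction $r:{}_{L'}G\to {}_LG$ with $\chi_L=r\circ\chi_{L'}$. Hence $(y,g)\mapsto (y,r(g))$ is compatible with the relation $(yu,g)\sim(y,\chi(u)g)$. It therefore descends to a map $X_\Q^{\chi_{L'}}\to X_\Q^{\chi_L}$ over $X_\Q$, and this map is equivariant for $r$. Functoriality, meaning identities and composites, follows because restriction maps compose. Each $\gamma(L)$ is a finite abelian cover, since ${}_LG$ acts freely on the right factor and acts transitively on the fibers of $\pi_{\chi_L}$.

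For (ii)–(iv), I compute the fiber of $\pi_{\chi_L}$ over a point $x=[a]\in X_\Q$. Let $S_a\subset\Zhat^\times$ be the stabilizer of a lift $y=[a]\in Y_\Q$, so that $yu=y$. The fiber over $x$ is then ${}_LG/\chi_L(S_a)$. The cover is unramified at $x$ exactly when $\chi_L(S_a)=1$.

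Over the periodic orbit $C_p$ I take $a=(e_p,\lambda)$, with $e_p$ having $p$-component $0$ and all other finite components $1$. The equation $ua=qa$ with $q\in\Q^\times$ forces $q=\pm1$ from the other components and the archimedean one. So $S_a=\Z_p^\times$, embedded at the $p$-component. By local class field theory, $\chi_L(\Z_p^\times)$ is the inertia group at $p$. It is therefore trivial precisely when $p$ is unramified in $L$. The archimedean point $(1_f,0)$ has a large stabilizer, so $\infty\in R$. Other orbits either have trivial stabilizer or contain zeros at many primes; in the latter case the stabilizer is $\prod_{p\in S}\Z_p^\times$, which is killed by $\chi_L$ unless $S$ contains a ramified prime. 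I will treat this set of points as lying over the finite set $R$, matching the statement's notion of ramification locus.

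For (iii) and (iv), with $p\notin R$, I lift the loop $t\mapsto (e_p,e^t)$, $t\in[0,\log p]$, to $Y_\Q$. The endpoint $(e_p,p)$ equals $p\cdot(p^{-1}e_p,1)$. Modulo $\Q^\times$ this is the start point times the idele $u$ with $u_\ell=p^{-1}$ for $\ell\ne p$ and $u_p=1$ (the $p$-component is irrelevant since it multiplies $0$). Now $\chi_L(u)$ is the inverse of the idelic image of $p$ at $p$, because $\chi_L$ is trivial on $\Q^\times$ and on the unramified local units. That image is $\operatorname{Frob}_p$ up to the sign convention, and the convention is fixed by requiring the arithmetic Frobenius.

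The fiber over $C_p$ is thus a union of circles, and monodromy acts on ${}_LG$ by translation by $\operatorname{Frob}_p$. The components correspond to the cosets ${}_LG/\langle\operatorname{Frob}_p\rangle$, each of degree equal to the order of $\operatorname{Frob}_p$, which is $f$. These cosets are in bijection with the decomposition-group cosets, that is, with the primes $\mathfrak p\mid p$.

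The main obstacle is the monodromy computation. The difficulty is tracking the sign and normalization of the reciprocity map so that the result is the arithmetic rather than the geometric Frobenius. I would settle this by checking the cyclotomic case $L=\Q(\mu_n)$ explicitly, using $\chi(u)=u\bmod n$ acting on roots of unity.
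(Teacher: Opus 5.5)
The paper gives no proof of this statement (it is quoted from \cite{CC3}), so your argument has to stand on its own. Your strategy is the natural one. The fiber of $\pi_{\chi_L}$ over $[a]$ is ${}_LG/\chi_L(S_a)$, the stabilizers come from solving $ua=qa$, and the monodromy along $C_p$ is $\chi_L$ of the element of $\Zhat^\times$ carrying the endpoint of the lifted loop back to its start. This route works for (i), (ii) and (iv), up to the corrections below. The step that proves (iii), however, is wrong as written.

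Your transporting element is $u=(p^{-1})_{\ell\neq p}\times(1)_p\in\Zhat^\times$. In $\A_f^\times$ one has $u=p^{-1}\cdot\iota_p(p)$, where $\iota_p(p)$ is $p$ at $p$ and $1$ elsewhere. Since $\chi_L$ is trivial on $\Q_+^\times$, $\chi_L(u)=\chi_L(\iota_p(p))$: it is the image of $p$ at $p$ itself, not its inverse. With Artin's normalization of reciprocity (local uniformizers map to arithmetic Frobenius), this is exactly $\operatorname{Frob}_p$. Writing $y_0=[(e_p,1)]$, the lift $t\mapsto[(e_p,e^t),g]$ ends at $[y_0u,g]=[y_0,\chi_L(u)g]$. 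So (iii) follows for $C_p$ oriented by the scaling flow $\lambda\mapsto e^t\lambda$.

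There is no leftover ``sign convention'' you may fix at will. Once $\chi_L$ and the orientation are given, the answer is forced, and your formula would produce the geometric Frobenius. Your proposed cyclotomic check has the matching defect. Under Artin's normalization, $u\in\Zhat^\times$ acts on $\mu_n$ by $\zeta\mapsto\zeta^{u^{-1}}$, not $\zeta\mapsto\zeta^{u}$; with $u\equiv p^{-1}\pmod n$ this gives $\zeta\mapsto\zeta^p$, as it should. The identification $\chi(u)=u\bmod n$ you suggest is Deligne's normalization and yields $\zeta\mapsto\zeta^{p^{-1}}$.

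Several smaller points need correcting.
\begin{itemize}
\item On $C_p$ the archimedean equation $\lambda=q\lambda$ with $\lambda\neq0$ forces $q=1$, not $q=\pm1$. This is what gives $S_a=\Z_p^\times$.
\item The stabilizer of $(1_f,0)$ is not large: $u=q\in\Q^\times\cap\Zhat^\times=\{\pm1\}$, whose image $\chi_L(-1)$ is complex conjugation. So the archimedean point is ramified only when $L$ is not totally real. This is harmless, since (ii) only requires the cover to be unramified outside $R$.
\item Your dichotomy ``trivial stabilizer, or $\prod_{p\in S}\Z_p^\times$'' holds only when $a_\infty\neq0$. If $a_\infty=0$ and $a_f$ vanishes exactly on $S$, the stabilizer also contains the $S$-units $\Z_S^\times$, embedded diagonally in $\prod_{\ell\notin S}\Z_\ell^\times$. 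For instance, over $(e_p,0)$ with $p$ unramified the fiber is ${}_LG/\langle\operatorname{Frob}_p,\chi_L(-1)\rangle$. These points must be attributed to $\infty$ for (ii) to hold.
\item In (i), ${}_LG$ does not act freely on $X_\Q^{\chi_L}$. Freeness fails precisely where $\chi_L(S_a)\neq1$, as your own fiber formula shows.
\item Matching ${}_LG/\langle\operatorname{Frob}_p\rangle$ with the primes above $p$ requires choosing a base prime $\mathfrak p_0\mid p$ to correspond to the component through $[y_0,1]$. So you obtain an equivariant bijection, not a choice-free one.
\end{itemize}
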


\subsection{Group structure of the fiber over a finite prime}\label{fibers}

We now refine the description of the universal cover by analyzing the fiber over a
finite periodic orbit $C_p$.  While this fiber was previously described topologically
as a mapping torus, we show here that the multiplicative structure of the adele class
space $Y_\Q$ endows it with a natural group structure.

Let $\pi : Y_\Q \to X_\Q$ be the canonical projection.  The fiber
\[
\mathcal{F}_p := \pi^{-1}(C_p)
\]
consists of adele classes $[a]\in Y_\Q$ such that the associated lattice $L_a$ (cf. Lemma~\ref{lem:spectrum}) is isomorphic
to $\Z[1/p]$.  Equivalently,
\[
a_p = 0, \qquad a_v \in \Z_v^\times \text{ for all } v \neq p,\infty,
\]
with $a_\infty \in \R^\times$ arbitrary.

\begin{prop}
The fiber $\mathcal{F}_p$ is a topological group under the multiplication of adele classes.
\end{prop}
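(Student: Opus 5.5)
We will describe $\mathcal F_p$ explicitly as a quotient of a subset of adeles and check the group axioms there. Let $e_p\in\A$ be the adele with $p$-component $0$ and all other components $1$. By the description just given, $\mathcal F_p$ is the image in $Y_\Q=\Q^\times\backslash\A$ of
\[
\widetilde{\mathcal F}_p:=\{a\in\A \mid a_p=0,\ a_v\in\Z_v^\times\ (v\neq p,\infty),\ a_\infty\in\R^\times\}
= e_p\cdot\A^\times_{(p)},
\]
where $\A^\times_{(p)}:=\Zhat^{(p)\times}\times\R^\times$ with $\Zhat^{(p)\times}=\prod_{\ell\neq p}\Z_\ell^\times$. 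One point needs care: the class of $a$ lies in $\pi^{-1}(C_p)$ when $a$ is only $\Q^\times$-equivalent to such an element. We first normalize. If $L_a\cong\Z[1/p]$ and the norm is nonzero, then Proposition~\ref{rankpro} and the linear-equivalence discussion give $r\in\Q^\times$ with $v_\ell(ra_\ell)=0$ for $\ell\neq p$ and $ra_p=0$. Hence every class in $\mathcal F_p$ has a representative in $\widetilde{\mathcal F}_p$.

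\emph{Closure and identity.} The set $\widetilde{\mathcal F}_p$ is closed under multiplication, since $e_p^2=e_p$ and units multiply to units. Multiplication on $Y_\Q$ is well defined because $\Q^\times$ acts diagonally, and $(ra)(sb)=(rs)(ab)$. So $\mathcal F_p$ is a sub-semigroup of $Y_\Q$. The class $[e_p]$ acts as an identity on it: for $a\in\widetilde{\mathcal F}_p$ we have $e_pa=a$. Note that $[e_p]$ is not the unit of $Y_\Q$, so $\mathcal F_p$ is a group with its own neutral element, exactly as for $C_p$.

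\emph{Inverses.} For $a=e_pu$ with $u\in\A^\times_{(p)}$, extend $u$ by $1$ at $p$ to an idele $\tilde u$. Then $[e_p\tilde u^{-1}]$ is an inverse of $[a]$, since $e_p\tilde u\cdot e_p\tilde u^{-1}=e_p$.

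\emph{Identification as a group.} The map $\A^\times_{(p)}\to\mathcal F_p$, $u\mapsto[e_pu]$, is a surjective homomorphism. Its kernel consists of those $u$ with $e_pu=re_p$ for some $r\in\Q^\times$, i.e.\ $u=r$ diagonally at all places $v\neq p$. Then $r\in\Z_\ell^\times$ for all $\ell\neq p$, so $r\in\pm p^\Z$. Hence
\[
\mathcal F_p\cong(\Zhat^{(p)\times}\times\R^\times)/\{\pm p^k\}
\cong\bigl(\Zhat^{(p)\times}\times\R^\times_+\bigr)/p^\Z ,
\]
which recovers the mapping torus $\mathcal T_p$. Alternatively it can be written as $C_\Q/\Q_p^\times$, matching Theorem~\ref{thm:fibers}. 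This gives the group structure.

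\emph{Topology.} The main obstacle is topological: we must show the quotient topology from $Y_\Q$ on $\mathcal F_p$ agrees with the quotient topology from $\A^\times_{(p)}$, so that multiplication and inversion are continuous. Multiplication is continuous on $\A$, hence on $Y_\Q$, so only inversion is at issue, because inversion is not continuous for the adelic topology on non-units. The plan is as follows. On $\widetilde{\mathcal F}_p$, the adelic topology restricted to tuples whose components at $\ell\neq p$ are units coincides with the product topology on $\Zhat^{(p)\times}\times\R^\times$. This holds because $\prod_{\ell\neq p}\Z_\ell^\times$ is compact, and the restricted-product and product topologies agree on a product of compact open sets. On that product, inversion is continuous. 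Next, $p^\Z$ acts properly discontinuously on $\Zhat^{(p)\times}\times\R^\times_+$ through the $\R_+^\times$ factor. Therefore the quotient map is open and the induced topology is that of a compact group.
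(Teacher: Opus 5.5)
Your group-theoretic argument (closure of $\widetilde{\mathcal F}_p$ under products, $[e_p]$ as neutral element, $[e_p\tilde u^{-1}]$ as inverse) is correct. It is exactly the paper's proof, which consists of nothing more. Your normalization of representatives and your computation of the kernel of $u\mapsto[e_pu]$ (the content of the paper's next theorem) are sound additions. The paper never discusses topology at all.

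Your topological paragraph, however, rests on a claim that is false, not merely unproven. The topology that $\mathcal F_p$ inherits from $Y_\Q$ does not agree with the quotient topology from $\A^\times_{(p)}$: it is indiscrete. Your steps (product topology on $\widetilde{\mathcal F}_p$, proper action of $p^\Z$) only show that the abstract quotient $(\Zhat^{(p)\times}\times\R^\times_+)/p^\Z$ is a compact group. They say nothing about how $\mathcal F_p$ sits inside $Y_\Q$.

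The difficulty is that the saturation $\Q^\times\widetilde{\mathcal F}_p$ is a union of translates $r\widetilde{\mathcal F}_p$ that accumulate on one another. This happens because an adelic neighborhood imposes only integrality at all but finitely many places. Concretely, let $U$ be a basic neighborhood of $a\in\widetilde{\mathcal F}_p$, given by:
\begin{itemize}
\item congruences modulo $\ell^N$ at the primes $\ell$ of a finite set $T\not\ni p$;
\item integrality at $\ell\notin T\cup\{p\}$;
\item any neighborhood of $0$ at $p$;
\item the condition $|x_\infty-a_\infty|<\epsilon$.
\end{itemize}
Let $c\in\widetilde{\mathcal F}_p$ be arbitrary. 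For $k\ll 0$ the interval between $p^{-k}(a_\infty\pm\epsilon)/c_\infty$ has length exceeding $\prod_{\ell\in T}\ell^N$. It therefore contains an integer $m$ with $m\equiv p^{-k}a_\ell c_\ell^{-1}\pmod{\ell^N}$ for all $\ell\in T$. Then $r=p^km\in\Q^\times$ satisfies $rc\in U$. Hence every open subset of $Y_\Q$ that meets $\mathcal F_p$ contains all of $\mathcal F_p$.

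So with the subspace topology the proposition holds only vacuously: any group with the indiscrete topology is a topological group. The meaningful statement is that $\mathcal F_p$ is a compact group isomorphic to $C_\Q/\iota_p(\Q_p^\times)$, which is what the paper's next theorem uses. That statement requires declaring the topology on $\mathcal F_p$ to be the quotient topology transported via $u\mapsto[e_pu]$. With that convention your first three topological steps suffice, and the ``agreement with the topology of $Y_\Q$'' step must be dropped.
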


\begin{proof}
Let $e_p \in \A$ be the idempotent adele defined by $(e_p)_p = 0$ and $(e_p)_v = 1$ for $v \neq p$. Its class $[e_p]$ belongs to $\mathcal{F}_p$.
For any $[a], [b] \in \mathcal{F}_p$, we have $a_p = b_p = 0$. Thus $(ab)_p = 0$.
For $v \neq p$, $a_v, b_v$ are units, so $a_v b_v$ is a unit.
Thus $[ab] \in \mathcal{F}_p$.
The element $[e_p]$ acts as the identity: for any $[a] \in \mathcal{F}_p$, $a e_p = a$ (since $0 \cdot 0 = 0$ and $x \cdot 1 = x$).
Inverses exist because the components $a_v$ for $v \neq p$ are units in $\Z_v$, and $a_\infty$ is invertible in $\R$.
\end{proof}

\begin{thm}
There is a canonical isomorphism of topological groups
\[
\mathcal{F}_p \cong C_\Q / \iota_p(\Q_p^\times),
\]
where $\iota_p: \Q_p^\times \hookrightarrow C_\Q$ is induced by the natural inclusion $\Q_p^\times \to \A^\times$ mapping $x$ to the idele with $x$ at $p$ and $1$ elsewhere.
\end{thm}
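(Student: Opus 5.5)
We construct an explicit surjective homomorphism from the idele group onto $\mathcal{F}_p$ and identify its kernel. Define
\[
\mu: \A^\times \longrightarrow \mathcal{F}_p,\qquad \mu(x) := [e_p x].
\]
For an idele $x$, the adele $e_p x$ has $p$-component $0$, $v$-components $x_v$ for $v\neq p,\infty$, and $\infty$-component $x_\infty\in\R^\times$. Since $x_v\in\Z_v^\times$ for almost all $v$, we first replace $x$ by $qx$ with $q\in\Q^\times$ chosen so that all $x_v$, $v\neq p$, become units. This is possible because $\Q^\times$ surjects onto $\bigoplus_{v\ne p,\infty}\Z$ (valuations) up to powers of $p$. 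The class of $e_p x$ therefore lies in $\mathcal{F}_p$, and we will check that $\mu$ is well defined on the class of $x$. It is multiplicative because $e_p^2=e_p$, so $\mu$ is a homomorphism of monoids into the group $\mathcal{F}_p$ of the preceding proposition. It is trivial on the diagonal $\Q^\times$, since $e_p q$ and $e_p$ have the same class in $Y_\Q$. Hence $\mu$ factors through $C_\Q=\Q^\times\backslash\A^\times$. Surjectivity is immediate: an element $[a]\in\mathcal{F}_p$ with $a_p=0$, $a_v\in\Z_v^\times$ and $a_\infty\in\R^\times$ equals $\mu(x)$ for the idele $x$ obtained by putting $1$ at $p$ and $a_v$ elsewhere.

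Next we show that $\iota_p(\Q_p^\times)\subset\ker\mu$, which holds because $e_p\,\iota_p(y)=e_p$. The main step is the reverse inclusion. Suppose $[e_p x]=[e_p]$ in $Y_\Q$, so that $e_p x = q e_p$ for some $q\in\Q^\times$. Comparing components at $v\neq p$ gives $x_v=q$ for all $v\neq p$, including $v=\infty$. Then $q^{-1}x$ is an idele equal to $1$ away from $p$, that is, $q^{-1}x\in\iota_p(\Q_p^\times)$, and so the class of $x$ in $C_\Q$ lies in $\iota_p(\Q_p^\times)$. The one point to handle with care is that equality in $Y_\Q$ is only up to $\Q^\times$, not up to $\Zhat^\times$. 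Since $\mathcal{F}_p$ is defined as a subset of $Y_\Q$, no unit ambiguity arises, and the argument just given is complete. We conclude that $\mu$ induces a bijective group homomorphism
\[
\bar\mu:\ C_\Q/\iota_p(\Q_p^\times)\longrightarrow \mathcal{F}_p .
\]

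Finally, we address the topology, which we expect to be the main obstacle. The map $\bar\mu$ is continuous, because multiplication by $e_p$ is continuous from $\A^\times$ (with the idele topology) into $\A$. For continuity of the inverse, we identify $\mathcal{F}_p$ with $\bigl(\prod_{v\neq p,\infty}\Z_v^\times\times\R^\times\bigr)\big/\{\pm p^{k}\}$. This holds because the only rationals that are units at all $v\ne p,\infty$ are $\pm p^k$, so the $\Q^\times$-equivalence restricted to such representatives is exactly the action of $\pm p^{\Z}$. Similarly, $C_\Q/\iota_p(\Q_p^\times)$ has the same description: every idele class has a representative that is a unit at all $v\ne p,\infty$, unique up to $\pm p^{\Z}$ after killing the $p$-component. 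Both groups are therefore quotients of the same locally compact group $K^{(p)}\times\R^\times$ by the same discrete subgroup, with $\bar\mu$ inducing the identity. This yields a homeomorphism and recovers the mapping-torus description $\mathcal{T}_p$ from the introduction.
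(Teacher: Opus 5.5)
The algebraic part of your argument is exactly the paper's proof: the same homomorphism $u\mapsto[u e_p]$ on $\A^\times$, surjectivity from the normal form $a_p=0$, $a_v\in\Z_v^\times$, and the kernel computed by comparing components at $v\neq p$. The paper stops there and never discusses topology. Your last paragraph therefore goes beyond it, and that is where the problem lies.

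You prove continuity of $\bar\mu$ using the topology that $\mathcal F_p$ inherits from $Y_\Q=\Q^\times\backslash\A$. You then identify $\mathcal F_p$ topologically with $W/\{\pm p^{\Z}\}$, where $W=\{0\}\times K^{(p)}\times\R^\times$, on the sole ground that $\pm p^{\Z}$ is the stabilizer of $W$ in $\Q^\times$. That is only a set-level statement. Since $W$ is neither open nor saturated in $\A$, the restriction of $\A\to Y_\Q$ to $W$ need not be a quotient map onto its image, and here it is not.

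To see this, take $x,y\in W$ and a basic neighbourhood of $y$. It is given by congruences modulo $\ell^{N_\ell}$ for $\ell$ in a finite set $T$ of primes $\neq p$, integrality at the other primes (the condition at $p$ is automatic), and $\epsilon$-closeness at $\infty$. Choose $c\in\Z$ with $c\equiv y_\ell x_\ell^{-1}\bmod \ell^{N_\ell}$ for $\ell\in T$, set $M=\prod_{\ell\in T}\ell^{N_\ell}$, and put $r=c+jM/p^k$. Then $rx$ meets all the finite conditions for every $j,k$, and $rx_\infty$ is $\epsilon$-close to $y_\infty$ for a suitable $j$ once $k$ is large. Hence every point of $\mathcal F_p$ lies in the closure of every other. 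The subspace topology from $Y_\Q$ is therefore indiscrete, so it cannot be homeomorphic to the Hausdorff group $C_\Q/\iota_p(\Q_p^\times)$.

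Your description of that quotient as $(K^{(p)}\times\R^\times)/\{\pm p^{\Z}\}$ is correct; it follows from the open subgroup $\Q_p^\times\times K^{(p)}\times\R^\times\subset\A^\times$, which maps onto $C_\Q$. The statement holds as an isomorphism of topological groups only if $\mathcal F_p$ is \emph{given} the quotient topology of $W/\{\pm p^{\Z}\}$, equivalently the topology transported by $\bar\mu$. This must be imposed as a definition rather than derived. Continuity of $\bar\mu$ should then be checked for that topology, e.g.\ on the open subgroup above, where $u\mapsto u e_p$ lands in $W$, not via the embedding into $\A$.
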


\begin{proof}
Multiplication by $e_p$ defines a surjective homomorphism
$\psi : \A^\times \to \mathcal{F}_p$, $\psi(u) = [u \cdot e_p]$.
Its kernel consists of ideles $u$ such that $[u e_p] = [e_p]$, i.e., $u e_p = q e_p$ for some $q \in \Q^\times$. The condition $u e_p = q e_p$ means:
\begin{itemize}
    \item[] At $v \neq p$: $u_v \cdot 1 = q \cdot 1 \implies u_v = q$.
    \item[] At $p$: $u_p \cdot 0 = q \cdot 0$ (always true).
\end{itemize}
Thus, $u$ must be equal to the rational number $q$ at all places except possibly $p$.
This means $u = q \cdot (1, \dots, 1, u_p/q, 1, \dots)$.
In terms of classes in $C_\Q = \A^\times / \Q^\times$, the kernel is precisely the image of the local group $\Q_p^\times$.
Thus $\psi$ induces an isomorphism $C_\Q / \Q_p^\times \xrightarrow{\sim} \mathcal{F}_p$.
\end{proof}

\begin{rem}
This description recovers the mapping--torus picture:
the quotient $C_\Q / \Q_p^\times$ is the group of idele classes away from $p$ modulo powers of $p$. 
\end{rem}

\subsection{The fiber over the archimedean orbit}

We next analyze the fiber over the archimedean orbit $C_\infty$.
The base point corresponds to the reduced element $\Z_\infty$, represented by the
idempotent adele $e_\infty=(1_f,0_\infty)$.

\begin{prop}
The fiber $\mathcal{F}_\infty := \pi^{-1}(C_\infty)$ is a compact topological group.
\end{prop}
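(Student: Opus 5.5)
The plan is to follow the proof for the finite-prime fiber $\mathcal F_p$ word for word. First I identify $\mathcal F_\infty$ concretely. By Theorem~\ref{xqthm1}, a class $[a]\in Y_\Q$ lies over $C_\infty=\{(\Z,0)\}$ exactly when $L_a\cong\Z$ and $a_\infty=0$. Since $L_a\cong\Z$, I can use the $\Q^\times$-action to arrange $L_a=\Z$. This means $a_f\in\Zhat^\times$, that is $v_p(a_p)=0$ for every $p$. So every element of the fiber has a representative of the form $a=(u,0_\infty)$ with $u\in\Zhat^\times$.

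Next I check closure under multiplication. The product $(u,0)(u',0)=(uu',0)$ is again of this form. The class of $e_\infty=(1_f,0_\infty)$ is a neutral element, because $(u,0)e_\infty=(u,0)$. The inverse of $[(u,0)]$ is $[(u^{-1},0)]$. Hence $\mathcal F_\infty$ is a group.

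To identify this group, I use the surjective homomorphism
\[
\psi:\Zhat^\times\to\mathcal F_\infty,\qquad \psi(u)=[(u,0_\infty)].
\]
Its kernel consists of those $u$ with $(u,0)=q(1,0)$ for some $q\in\Q^\times$, which forces $u=q$ in $\A_f$. A rational number that is a unit at every prime is $\pm1$, so $\ker\psi=\{\pm1\}$. At first this seems to give $\mathcal F_\infty\cong\Zhat^\times/\{\pm1\}$ rather than $\Zhat^\times$. The resolution is that the element $-1\in\Q^\times$ acts on $(u,0)$ as $(-u,0)$, and I must decide whether this is a genuine identification in $Y_\Q$. It is: $-(u,0)=(-u,0)$. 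So the honest answer is $\Zhat^\times/\{\pm1\}$, which is still a compact group.

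I expect this sign issue to be the main subtlety when matching the table entry $\widehat{\Z}^\times\cong\operatorname{Gal}(\Q^{ab}/\Q)$. The statement to be proved here only asserts compactness, so it does not depend on how the sign is resolved. I would remark that if one works with $\Q_+^\times$, as in the group $G=\Q_+^\times\times\Zhat^\times$ used earlier, the kernel becomes trivial and $\mathcal F_\infty\cong\Zhat^\times$.

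Finally, for the topology, $\psi$ is continuous. This holds because it is the restriction of $\A\to Y_\Q$ composed with the continuous map $u\mapsto(u,0)$. The image of the compact group $\Zhat^\times$ under $\psi$ is therefore compact. Multiplication and inversion on $\mathcal F_\infty$ are induced from the continuous operations on $\Zhat^\times$. Since $\psi$ is a continuous bijection from the compact quotient $\Zhat^\times/\ker\psi$, it is a homeomorphism onto its image, provided that image is Hausdorff. If Hausdorffness of the subspace topology from $Y_\Q$ is in doubt, I would instead give $\mathcal F_\infty$ the quotient topology from $\Zhat^\times$, which is Hausdorff because $\ker\psi$ is finite, hence closed.
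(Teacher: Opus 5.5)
Your proof is correct and follows the paper's argument: you reduce to representatives $(u,0_\infty)$ with $u\in\Zhat^\times$, take $[e_\infty]$ as the identity and obtain inverses from $\Zhat^\times$, and you also supply the compactness step, which the paper leaves implicit, as the continuous image of $\Zhat^\times$ under $\psi$. Your side remarks are also right: since $\Zhat^\times\cap\Q^\times=\{\pm1\}$ and the full $\Q^\times$ acts on $\A$, the fiber is $\Zhat^\times/\{\pm1\}$ (this affects the paper's next theorem, not this proposition), and your caution about the topology is warranted, because the density of $\Q^\times$ in $\A_f$ makes the subspace topology of $\mathcal F_\infty\subset Y_\Q$ indiscrete, so the quotient topology from $\Zhat^\times$ is the natural one to use.
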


\begin{proof}
The elements of the fiber $\mathcal{F}_\infty$ are adele classes $[a]$ such that $a_\infty = 0$ and $L_a = \Z$. The condition $L_a = \Z$ implies that the finite part $a_f$ of $a$ is a unit in $\Zhat$.
The multiplication is inherited from that on $\A$. The class $[e_\infty]$ is the neutral element. Since $\Zhat^\times$ is a group, every element is invertible.
\end{proof}

\begin{thm}
There are canonical isomorphisms
\[
\mathcal{F}_\infty \cong C_\Q/\R_+^\times \cong  \Zhat^\times \cong \operatorname{Gal}(\Q^{\text{ab}}/\Q).
\]
\end{thm}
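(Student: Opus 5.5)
We follow the pattern of the theorem for $\mathcal{F}_p$, replacing the idempotent $e_p$ by $e_\infty=(1_f,0_\infty)$. Consider the map
\[
\psi_\infty:\A^\times\longrightarrow \mathcal{F}_\infty,\qquad \psi_\infty(u)=[u\,e_\infty]=[(u_f,0)].
\]
We first check that it lands in $\mathcal{F}_\infty$. For an idele $u$, the finite part $u_f$ lies in $\A_f^\times$. By strong approximation ($\A_f^\times=\Q_+^\times\Zhat^\times$) we write $u_f=q\,w$ with $q\in\Q^\times$ and $w\in\Zhat^\times$. Then $[(u_f,0)]=[(w,0)]$, and this class lies in $\mathcal{F}_\infty$ by the description of the fiber in the preceding proposition. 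The same computation gives surjectivity, since every element of $\mathcal{F}_\infty$ is of the form $[(w,0)]$ with $w\in\Zhat^\times$. Multiplicativity is immediate because $e_\infty$ is idempotent.

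Next we compute the kernel, as in the $p$-adic case. The condition $[u e_\infty]=[e_\infty]$ means that $u e_\infty=q e_\infty$ for some $q\in\Q^\times$, that is, $u_\ell=q$ for all finite $\ell$, with $u_\infty$ arbitrary. Modulo $\Q^\times$, the kernel is therefore the image of $\R^\times=\Q_\infty^\times$ in $C_\Q$. This yields a group isomorphism $C_\Q/\iota_\infty(\R^\times)\cong\mathcal{F}_\infty$.

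It remains to identify this quotient with $C_\Q/\R_+^\times$, and here some care with signs is needed. Using the decomposition $C_\Q\cong\Zhat^\times\times\R_+^\times$, the subgroup $\R^\times$ corresponds to $\{\pm1\}\times\R_+^\times$, because $-1_\infty\equiv(-1)_f$ modulo $\Q^\times$. Hence the quotient is $\Zhat^\times/\{\pm1\}$ rather than $\Zhat^\times$. The main obstacle is to reconcile this sign with the statement. The resolution I would check first is that $\Q^\times$ acts on $\A$ through $\pm1$ as well. In the class $[(w,0)]$, multiplication by $-1\in\Q^\times$ sends $(w,0)$ to $(-w,0)$, so in $Y_\Q$ the points $(w,0)$ and $(-w,0)$ coincide. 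The honest map $\Zhat^\times\to\mathcal{F}_\infty$, $w\mapsto[(w,0)]$, is therefore surjective with kernel $\{\pm1\}$, giving $\mathcal{F}_\infty\cong\Zhat^\times/\{\pm1\}$. To obtain the stated $\Zhat^\times$ one must either work with $\Q_+^\times$-quotients, as in the group $G=\Q_+^\times\times\Zhat^\times$ used in Proposition~\ref{ident}, or interpret $C_\Q/\R_+^\times$ with its canonical identification $\Zhat^\times$ via $C_\Q=\Zhat^\times\times\R_+^\times$ and fix the normalization accordingly. I would adopt the $\Q_+^\times$ convention, under which $\R^\times$ is replaced by its identity component $\R_+^\times$ and the kernel computation gives exactly $C_\Q/\R_+^\times\cong\Zhat^\times$.

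Granting this convention, the map is continuous and $\Zhat^\times$ is compact, so the bijective homomorphism is a homeomorphism. Finally, the identification $\Zhat^\times\cong\operatorname{Gal}(\Q^{ab}/\Q)$ is the Kronecker–Weber theorem together with the cyclotomic action $\sigma_u(\zeta)=\zeta^u$ on roots of unity; equivalently, it is the Artin reciprocity isomorphism $C_\Q/D\cong\operatorname{Gal}(\Q^{ab}/\Q)$, where $D$ is the connected component of the identity. This is compatible with the reciprocity maps $\chi_L$ used in Definition~\ref{coverdef}.
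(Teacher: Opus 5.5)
Your computation follows the paper's proof: the same map $u\mapsto[u e_\infty]$ and the same kernel computation. Your sign analysis is correct, and it exposes an error in the statement itself.

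Since $(-1)\cdot(w,0)=(-w,0)$ in $\Q^\times\backslash\A$, the natural map $w\mapsto[(w,0)]$ from $\Zhat^\times$ onto $\mathcal F_\infty$ has kernel $\{\pm1\}$. Likewise, the natural surjection $C_\Q/\R_+^\times\to\mathcal F_\infty$ has kernel $\iota_\infty(\R^\times)/\R_+^\times$. This kernel has order $2$ and is generated by $\iota_\infty(-1)\equiv((-1)_f,1)$. The paper reaches $\Zhat^\times$ only through exactly this slip. It correctly finds that the kernel is the image of $\R^\times$, but then says $\psi$ ``kills the $\R_+^\times$ component and the $\Q_+^\times$ action'' and concludes $\A_f^\times/\Q_+^\times$. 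In fact the fiber is $\Q^\times\backslash(\A_f^\times\times\{0\})\cong\A_f^\times/\Q^\times\cong\Zhat^\times/\{\pm1\}$. The paper's description of $\mathcal F_\infty$ as the classes $[(u,0)]$ with $u\in\Zhat^\times$ silently assumes this parametrization is injective.

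The gap in your proposal is the final move: you cannot ``adopt the $\Q_+^\times$ convention''. The paper fixes $Y_\Q=\Q^\times\backslash\A$ throughout, and two features of the paper force this choice:
\begin{itemize}
\item In the framed-divisor picture, $x\mapsto -x$ on $L_a=L_{-a}$ is an isomorphism $\mathcal F(a)\cong\mathcal F(-a)$.
\item The translation action of $C_\Q$ does not descend to $\Q_+^\times\backslash\A$.
\end{itemize}
Moreover, in that convention the kernel of $u\mapsto[ue_\infty]$ is $\Q_+^\times\cdot\iota_\infty(\R^\times)$. This is a different subgroup of $\A^\times$ from $\Q^\times\cdot\iota_\infty(\R_+^\times)$, so it is not a matter of ``replacing $\R^\times$ by $\R_+^\times$''.

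What your argument actually proves, correctly, is the amended statement
\[
\mathcal F_\infty\cong C_\Q/\iota_\infty(\R^\times)\cong\Zhat^\times/\{\pm1\}\cong\operatorname{Gal}(\Q^{\mathrm{ab}}/\Q)/\langle c\rangle=\operatorname{Gal}((\Q^{\mathrm{ab}}\cap\R)/\Q).
\]
The last step uses that Artin reciprocity maps $\iota_\infty(\R^\times)$ onto the decomposition group at $\infty$, which is generated by complex conjugation $c$ (the image of $-1\in\Zhat^\times$). This is the exact analogue of $\mathcal F_p\cong C_\Q/\iota_p(\Q_p^\times)$. You should present it as the conclusion and note that the printed chain is off by this factor of $2$, rather than changing definitions to recover $\Zhat^\times$.
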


\begin{proof}
Consider the map $\psi: \A^\times \to \mathcal{F}_\infty$ defined by $u \mapsto [u e_\infty]$.
This is a surjective homomorphism.
The kernel consists of ideles $u$ such that $u e_\infty = q e_\infty$ for some $q \in \Q^\times$.
This condition implies $u_f = q \cdot 1_f$, meaning $u$ is rational at all finite places.
Thus $u$ lies in the image of $\R^\times$ (embedded as $(1, \dots, 1, x)$ modulo $\Q^\times$),
and $C_\Q \cong \A_f^\times \times \R_+^\times / \Q_+^\times$.
The map  $\psi$ kills the $\R_+^\times$ component and the $\Q_+^\times$ action (since $q e_\infty = e_\infty$ in the class).
The result is thus isomorphic to $\A_f^\times / \Q_+^\times \cong \Zhat^\times$.
\end{proof}

\begin{rem}
This result completes the geometric picture of class field theory. 
\begin{itemize}[leftmargin=*, labelindent=0pt]
    \item[-] Over a finite prime $p$, the fiber $\mathcal F_p$ carries the information inherent to local class field theory of $p$ (modulo global units).
    \item[-] Over the infinite prime, the fiber $\mathcal F_\infty$ encodes the structure of the global Galois group $\operatorname{Gal}(\Q^{\text{ab}}/\Q)$.
\end{itemize}
The "singularity" at infinity thus contains the full arithmetic complexity of the abelian extensions of $\Q$.
\end{rem}

\subsection{The fiber over the generic point}

Finally, we consider the fiber $\mathcal{F}_\eta := \pi^{-1}([\Z])$ over the generic point $\Theta(\eta)=[\Z]$ of the
Jacobian, corresponding to a principal lattice with nonzero metric.

Precisely, 
a class $[a] \in Y_\Q$ if and only if its image in the  Jacobian is the pair $(\Z, 0)$ (representing the group $\Z$ and a non-zero metric).
This means:
\begin{enumerate}
    \item $a_\infty \neq 0$ (finite type).
    \item $L_a \cong \Z$ as groups (the underlying lattice is principal).
\end{enumerate}

\begin{thm}
The fiber over the generic point is canonically isomorphic to the idele class group
\[
\mathcal{F}_\eta \cong C_\Q.
\]
\end{thm}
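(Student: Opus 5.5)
We follow the same pattern as for the fibers $\mathcal F_p$ and $\mathcal F_\infty$: construct a surjective homomorphism from the ideles onto the fiber and compute its kernel. The base point of $\mathcal F_\eta$ is the class of the unit adele $1=(1_f,1_\infty)$, so the natural map is
\[
\psi:\A^\times\longrightarrow Y_\Q,\qquad \psi(u)=[u\cdot 1]=[u].
\]
This is simply the restriction to ideles of the quotient map $\A\to\Q^\times\backslash\A$. It is a homomorphism for the multiplication of adele classes, and it is continuous.

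The first step is to identify the fiber concretely. A class $[a]$ lies over $\Theta(\eta)=[\Z]$ exactly when $a_\infty\neq 0$ and $L_a\cong\Z$. By Proposition~\ref{rankpro} and Remark~\ref{ade}, $L_a$ corresponds to the divisor $\Phi(a_f)=\sum_p v_p(a_p)[p]$. Moreover, $L_a\cong\Z$ iff this divisor is linearly equivalent to $0$, iff it equals $\operatorname{div}(q)$ for some $q\in\Q^\times$. In that case every $v_p(a_p)$ is finite and vanishes for almost all $p$, so $a_f\in\A_f^\times$. Together with $a_\infty\in\R^\times$, this shows that $a$ is an idele.

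Conversely, every idele $u$ has $\Phi(u_f)$ principal: the divisor has finite support, so it equals $\operatorname{div}(q)$ for $q=\prod_p p^{v_p(u_p)}$. Hence $L_u\cong\Z$, and $u_\infty\neq0$. Thus $\mathcal F_\eta$ is exactly the image of $\A^\times$ in $Y_\Q$, which proves that $\psi$ is surjective onto $\mathcal F_\eta$. We note in passing that the stated condition ``image $(\Z,0)$'' in the text should be read as $(\Z,\|\cdot\|\neq0)$, as in \eqref{extAJ}.

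The second step is the kernel. We have $\psi(u)=\psi(v)$ iff $u=qv$ for some $q\in\Q^\times$. Because $u$ and $v$ are invertible, this is the same as $uv^{-1}\in\Q^\times$. The induced map $\bar\psi:C_\Q=\Q^\times\backslash\A^\times\to\mathcal F_\eta$ is therefore a bijective group homomorphism. This is easier than the case of $\mathcal F_p$, since multiplying by the idempotent $1$ kills no local factor.

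The main obstacle is the topological part of the statement: showing that $\bar\psi$ is a homeomorphism when $\mathcal F_\eta$ carries the subspace topology of $Y_\Q$ and $C_\Q$ carries the idele topology. The idele topology is strictly finer than the topology induced from $\A$ on $\A^\times$. So we first fix the meaning of ``canonically isomorphic'': we read it as an isomorphism of groups, with $\mathcal F_\eta$ given the group topology transported from $C_\Q$.

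If a genuinely topological comparison is wanted, the plan is to reduce to compact pieces. Using $C_\Q\cong\Zhat^\times\times\R_+^\times$, one checks that the restriction of $\bar\psi$ to each set $\Zhat^\times\times[\lambda^{-1},\lambda]$ is a homeomorphism onto its image. There the adelic and idelic topologies agree, because $u\mapsto u^{-1}$ is continuous on $\Zhat^\times$ for the adelic topology. One then concludes that $\bar\psi$ identifies the two topologies on such compacta.
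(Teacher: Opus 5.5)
Your argument is correct and is essentially the paper's: the paper also shows that every idele has principal $L_a$, and that conversely $L_a\cong\Z$ together with $a_\infty\neq 0$ forces $a$ to be an idele, so the fiber is exactly the set of idele classes. Your divisor-theoretic phrasing and explicit injectivity check just make this more precise, and they avoid the paper's loose remark that every rank-1 subgroup of $\Q$ is isomorphic to $\Z$. The paper makes no topological claim here, so your last paragraph is an optional extra. If you pursue it, note that $Y_\Q$ is not Hausdorff, so the compact-piece argument must also control the $\Q^\times$-saturation of neighborhoods (for instance by taking congruence neighborhoods $1+N\Zhat$ with $N$ divisible by all primes up to about $\lambda$); comparing the adelic and idelic topologies on $\Zhat^\times$ alone is not enough.
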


\begin{proof}
The condition $a_\infty \neq 0$ is satisfied by all ideles.
The condition $L_a \cong \Z$ means that the fractional ideal generated by the finite part $a_f$ is a principal ideal. Since $\Z$ is a principal ideal domain (class number 1), every rank-1 subgroup of $\Q$ is isomorphic to $\Z$.
Therefore, the condition $L_a \cong \Z$ is satisfied for \textit{all} ideles $a \in \A^\times$.
Conversely, if $[a]$ is in the fiber, then $L_a \cong \Z$ implies $L_a = q \Z$ for some $q$, so $(q^{-1}a)_f \in \Zhat^\times$. Combined with $a_\infty \neq 0$, this implies $a$ is an idele.
Thus, the fiber is exactly the set of idele classes.
\end{proof}

\begin{rem}
The fibers thus assemble into the following  coherent picture:
\begin{itemize}[leftmargin=*, labelindent=0pt]
  \item the generic fiber $\mathcal F_\eta$ recovers the idele class group $C_\Q$;
  \item over the closed points $p$ and $\infty$ of $\spzb$, we find the "ramified" fibers (mapping tori) which are quotients of $C_\Q$ by the local units.
\end{itemize}
\end{rem}

The overall construction adds a top layer to the previous tower \eqref{2tow}, yielding a space $\widetilde{\spz}$ that covers $\mathcal{X}_\Q$
\begin{equation}\label{3floorb}
\begin{tikzcd}
\widetilde{\spz} \arrow[r,"\tilde\Theta"]\arrow[d] & Y_\Q\arrow[d]
\\
\mathcal{X}_\Q \arrow[r] \arrow[d] & X_\Q \arrow[d] \\
\spzb \arrow[r, "\Theta"] & \Jac(\spzb)
\end{tikzcd}
\end{equation}


\section{The geometric  monoids $\MFr(\spzb)$ and $\MRt(\spzb)$}\label{sectgeompic}

To establish a geometric interpretation of the adelic space $Y_\Q = \Q^\times \backslash \A$ we identify its points with arithmetic divisors equipped with \emph{rigidifying data} defined in terms of finite and infinite frames. We first introduce the notion of \emph{framed arithmetic divisors},  as torsion free rank-1 groups $L$ endowed with trivializations of their completions at both finite and infinite rational places. This formulation treats the archimedean and non-archimedean data symmetrically and realizes the adelic product as a tensor product. Subsequently, by applying Pontryagin duality to the finite frame, we derive the notion of \emph{rooted arithmetic divisors}. This dual perspective interprets the profinite frame as a coherent system of roots of unity, offering a combinatorial description that characterizes the divisor through its interaction with the torsion of the dual group.

\subsection{Framed arithmetic divisors}

\begin{defn}
A \emph{framed arithmetic divisor} is a triple $(L, \xi, \tau)$ consisting of the following data:
\begin{enumerate}
    \item A torsion-free rank-1 abelian group $L$.
    \item A \emph{finite frame} defined as a group homomorphism $$\xi: L \to \widehat{\Z}$$ generating the module $\operatorname{Hom}(L, \widehat{\Z})$.
    \item An \emph{infinite frame}, namely a homomorphism $$\tau: L \to \R$$ of abelian groups.
\end{enumerate}
\end{defn}

\begin{rem}
The infinite frame $\tau$ replaces the datum of a norm $\|\cdot\|$ on $L\otimes_\Z\R$. Indeed:
\begin{itemize}[leftmargin=*, labelindent=0pt]
    \item[] The norm corresponds to the absolute value: $\|x\| = |\tau(x)|$.
    \item[] The map $\tau$ retains the \emph{sign}  information (orientation) that corresponds to the connected components of the idele class group $C_\Q$.
    \item[] $\tau = 0$ corresponds to the singular fiber at infinity (\ie $a_\infty = 0$).
\end{itemize}
\end{rem}

The next lemma provides the adelic interpretation to a finite frame.

\begin{lem}\label{tight}
Let $L\subset \Q$ be a rank-1 group and $\xi \in \operatorname{Hom}(L, \widehat{\Z})$.\vspace{.03in}

$(i)$~There exists a unique finite rational adele $a_f\in \A_f$ such that 
$$
\xi(x)=a_f x, \ \ \forall x\in L.
$$
$(ii)$~The following statements are equivalent:
\begin{enumerate}
    \item $\xi$ is a generator of the $\widehat{\Z}$-module $\operatorname{Hom}(L, \widehat{\Z})$.
    \item The following equality of groups holds: $L=\{q\in \Q\mid a_f q\in \widehat{\Z}\}$.
\end{enumerate}
\end{lem}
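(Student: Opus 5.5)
Both parts can be proved prime by prime, using $\widehat{\Z}=\prod_p\Z_p$ and the description of $L$ by its divisor $D=(n_p)$ from Proposition~\ref{rankpro}, so that $L=\{x\in\Q\mid v_p(x)\ge -n_p\}$.

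\emph{Part (i).} Fix a nonzero $x_0\in L$ and set $a_f:=\xi(x_0)/x_0\in\A_f$. This lies in $\A_f$ because $\xi(x_0)\in\Zhat$ and $1/x_0\in\Q^\times\subset\A_f^\times$. For any $x\in L$ write $x/x_0=m/n$ with $m,n\in\Z$. Then $nx=mx_0$, hence $n\xi(x)=m\xi(x_0)=n\,a_f x$. Since $\Zhat$ is torsion free, $\xi(x)=a_fx$. Uniqueness follows because $a_f$ is forced to equal $\xi(x_0)/x_0$ by evaluating at $x_0$.

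\emph{Part (ii): describing $\operatorname{Hom}(L,\Zhat)$.} By (i), $\operatorname{Hom}(L,\Zhat)$ is identified with the $\Zhat$-module $M:=\{b\in\A_f\mid bL\subset\Zhat\}$. Work componentwise. The condition $b_pL\subset\Z_p$ says $v_p(b_p)\ge n_p$ when $n_p<\infty$, and $b_p=0$ when $n_p=\infty$ (because $L$ then contains elements of arbitrarily negative $p$-valuation). Hence
\[
M=\prod_{p:\,n_p<\infty}p^{n_p}\Z_p\times\prod_{p:\,n_p=\infty}\{0\},
\]
which is a restricted product since $n_p\ge 0$ for almost all $p$. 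This $M$ is the cyclic $\Zhat$-module generated by the adele $c=(p^{n_p})$ with $p^\infty:=0$, which is exactly the representative used for surjectivity of $\Phi$ in \eqref{fifi}. Consequently $\xi$, i.e.\ $a_f$, generates $M$ if and only if $a_f=uc$ with $u\in\Zhat^\times$. This amounts to saying that for every $p$ we have $v_p(a_{f,p})=n_p$, with the convention $v_p(0)=\infty$. One direction is clear. For the converse, if $a_f$ generates $M$ then $c=ta_f$ and $a_f=sc$ for some $s,t\in\Zhat$. At each $p$ with $n_p<\infty$ this gives $s_pt_p=1$, so $s_p$ is a unit. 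At the remaining $p$ both components are $0$, and we may replace $s_p$ by $1$. Thus $a_f=uc$ with $u$ a unit.

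\emph{Part (ii): comparison with the lattice.} By Remark~\ref{ade}, the group $L':=\{q\mid a_fq\in\Zhat\}$ is $\mathcal L(\Phi(a_f))$, the group whose divisor has coefficients $v_p(a_{f,p})$. By Proposition~\ref{rankpro} (the bijection $D\mapsto\mathcal L(D)$), we get $L'=L$ exactly when $v_p(a_{f,p})=n_p$ for all $p$. This is the generator criterion obtained above, so statements 1 and 2 are equivalent.

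The main obstacle is the primes with $n_p=\infty$, where $\operatorname{Hom}(L,\Z_p)=0$. There one must check that the generator condition places no unit constraint and yet still matches $a_{f,p}=0$. The conventions $v_p(0)=\infty$ and $p^\infty=0$ handle this uniformly, but this is where care is needed.
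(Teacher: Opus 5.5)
Your proof is correct and follows the paper's route. Part (i) is an explicit version of the paper's extension-of-scalars argument via $\Q\otimes_\Z L\cong\Q$ and $\A_f=\Q\otimes_\Z\Zhat$; note that cancelling $n$ uses that $\A_f$, not just $\Zhat$, is torsion free, since $a_f x$ lies a priori only in $\A_f$. Part (ii) reduces both statements to $v_p(a_{f,p})=n_p$ for all $p$ through the same local computation $\operatorname{Hom}(L,\Z_p)=p^{n_p}\Z_p$ or $\{0\}$, and your cyclic-module argument with $c=(p^{n_p})$ and $s,t\in\Zhat$ justifies the prime-by-prime generator criterion that the paper only asserts.
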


\begin{proof}$(i)$~The tensor product decomposition $\A_f = \Q \otimes_{\Z} \Zhat$ and the isomorphism $\Q\otimes_{\Z} L\simeq \Q$ show that $\xi$ extends uniquely to a group homomorphism $$\tilde \xi:\Q\to \A_f.$$ Since $\A_f$ is a $\Q$-vector space, this determines uniquely an element $a_f\in \A_f$ such that $\tilde\xi(x)=a_f x, \ \ \forall x\in  \Q$.\newline
$(ii)$~By $(i)$ a homomorphism $\xi: L \to \widehat{\Z}$ is determined by a sequence of local multipliers $\alpha_p \in \Q_p$ such that $\xi(x)_p = \alpha_p x$.
Let $M = \operatorname{Hom}(L, \widehat{\Z})$. We analyze the local structure $M_p = \operatorname{Hom}(L, \Z_p)$.

\emph{The structure of the module $M_p$.}
The condition for $\alpha_p$ to define a map into $\Z_p$ can be stated as follows:
\[
\forall x \in L, \quad \alpha_p x \in \Z_p \iff v_p(\alpha_p) + v_p(x) \ge 0.
\]
In terms  of the arithmetic divisor $\sum n_p[p]$ of $L$, the minimal valuation $v_p(x)$ for $x\in L$ is $-n_p$, this requires:
\[
v_p(\alpha_p) - n_p \ge 0 \implies v_p(\alpha_p) \ge n_p.
\]
Two cases are possible:
\begin{itemize}[leftmargin=*, labelindent=0pt]
    \item[] If $n_p < \infty$, then $M_p = \{ \alpha_p \in \Q_p \mid v_p(\alpha_p) \ge n_p \} = p^{n_p} \Z_p$.
    \item[] If $n_p = \infty$, then $M_p = \{0\}$ (since no non-zero $\alpha_p$ maps $\Q_p$ into $\Z_p$).
\end{itemize}

\emph{The generator condition.}
$\xi$ generates $M$ as a $\widehat{\Z}$-module if and only if for every $p$, $\alpha_p$ generates $M_p$  as a module over $\Z_p$. Again, two cases are possible:
\begin{itemize}[leftmargin=*, labelindent=0pt]
    \item[] If $n_p < \infty$: $\alpha_p$ generates $p^{n_p} \Z_p$ if and only if $v_p(\alpha_p) = n_p$.
    \item[] If $n_p = \infty$: $0$ generates $\{0\}$. This is always true.
\end{itemize}
Thus, the statement 1. is equivalent to: $v_p(\alpha_p) = n_p$ for all $p$.\vspace{.04in}

\emph{Statement 2.}
Let $L'=\{q\in \Q\mid a_fq\in \widehat{\Z}\}$. It is  defined by:
\[
L' =   \{ x \in \Q \mid v_p(x) \ge -v_p(\alpha_p), \ \forall p \}.
\]
Statement 2. requires $L = L'$, which means the equality of the arithmetic divisors
$
 n_p = v_p(\alpha_p), \forall p
$.
Thus,  both statements $1$. and $2$. are equivalent to the equality of valuations $v_p(\alpha_p) = n_p$ for all  $p$.
\end{proof}

\subsection{The moduli space $\MFr(\spzb)$}

The multiplicative monoid of framed arithmetic divisors modulo isomorphism provides a first geometric interpretation of the adele class space $Y_\Q$.

\begin{defn}\label{moduli} The moduli space $\MFr(\spzb)$   is the set of framed arithmetic divisors modulo isomorphism. An isomorphism  $(L, \xi, \tau)\cong (L', \xi', \tau')$ is a group isomorphism $\phi: L \xrightarrow{\sim} L'$ such that $\xi' \circ \widehat{\phi} = \xi$ and $\tau' \circ \phi = \tau$.
\end{defn}

Given a rational adele $a = (a_f, a_\infty) \in \A$, one defines the map  $$\mathcal{F}: \A \longrightarrow \{ \text{Framed arithmetic divisors} \}\qquad \mathcal{F}(a) = (L_a, \xi_a, \tau_a)$$
where:
\begin{itemize}[leftmargin=*, labelindent=0pt]
    \item[-] $L_a = \{ q \in \Q \mid q a_f \in \widehat{\Z} \}$.
    \item[-] $\xi_a: L_a \to \widehat{\Z}$ is defined by $\xi_a(x) = x a_f$.
    \item[-] $\tau_a: L_a \to \R$ is given by $\tau_a(x) = x a_\infty$.
\end{itemize}

\begin{thm}
The map $\mathcal{F}$ induces a canonical bijection between  $Y_\Q = \Q^\times \backslash \A$ and the moduli space $\MFr(\spzb)$ of framed arithmetic divisors.
\end{thm}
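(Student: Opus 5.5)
The proof follows the pattern of Theorem \ref{xqthm1}, in four steps: well-definedness of $\mathcal F(a)$, invariance under $\Q^\times$, injectivity, and surjectivity. The finite frame is controlled throughout by Lemma~\ref{tight}.

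\emph{Well-definedness.} For $a\in\A$, the group $L_a$ is a rank-1 subgroup of $\Q$ by Remark~\ref{ade} and Proposition~\ref{rankpro}. The map $\xi_a(x)=xa_f$ lands in $\widehat\Z$ by the definition of $L_a$. By Lemma~\ref{tight}(ii), applied with the unique multiplier $a_f$ from part (i), $\xi_a$ generates $\operatorname{Hom}(L_a,\widehat\Z)$, since statement 2 holds tautologically. So $\mathcal F(a)$ is a framed arithmetic divisor.

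\emph{$\Q^\times$-invariance.} For $r\in\Q^\times$ one has $L_{ra}=r^{-1}L_a$. The map $\phi(x)=rx$ is an isomorphism $L_{ra}\to L_a$ with $\xi_a(\phi(x))=rxa_f=\xi_{ra}(x)$ and $\tau_a(\phi(x))=\tau_{ra}(x)$. Hence $\mathcal F(ra)\cong\mathcal F(a)$, and $\mathcal F$ descends to $Y_\Q$.

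\emph{Surjectivity.} Given $(L,\xi,\tau)$, first choose an embedding $L\hookrightarrow\Q$, which exists since $L$ is torsion-free of rank 1. Lemma~\ref{tight}(i) gives a unique $a_f\in\A_f$ with $\xi(x)=a_fx$. Since $\tau$ extends $\Q$-linearly to $L\otimes\Q\cong\Q\to\R$, there is a unique $a_\infty\in\R$ with $\tau(x)=a_\infty x$. By Lemma~\ref{tight}(ii), the generator condition forces $L=L_a$, so $(L,\xi,\tau)=\mathcal F(a)$ on the nose.

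\emph{Injectivity.} This is the step needing the most care, because the quotient is only by $\Q^\times$ and not by $\widehat\Z^\times$, so the frames must pin down $a$ exactly. Suppose $\phi:\mathcal F(a)\xrightarrow{\sim}\mathcal F(b)$. As in Theorem~\ref{xqthm1}, $\phi$ is multiplication by some $r\in\Q^\times$; this uses only that $L_a,L_b\subset\Q$ have rank 1. The compatibility conditions read $xa_f=rxb_f$ and $xa_\infty=rxb_\infty$ for all $x\in L_a$. Taking any nonzero $x\in L_a$ and using that $\A_f$ and $\R$ are torsion-free $\Q$-vector spaces, we cancel $x$ and get $a_f=rb_f$ and $a_\infty=rb_\infty$, so $a=rb$.

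The point to check is the meaning of $\xi'\circ\widehat\phi=\xi$ in Definition~\ref{moduli}. I would read $\widehat\phi$ as $\phi$ itself, or as its extension to completions; either reading gives the same pointwise identity on $L_a$. The uniqueness in Lemma~\ref{tight}(i) then guarantees that the multiplier is determined.
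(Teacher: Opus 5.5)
Your proof is correct and follows essentially the same route as the paper: descent to $Y_\Q$ via multiplication by a rational number, and bijectivity via Lemma~\ref{tight}(i)--(ii) for the finite frame together with the $\Q$-linear extension of $\tau$. Your explicit injectivity argument (an isomorphism of subgroups of $\Q$ is multiplication by some $r\in\Q^\times$, and cancelling a nonzero $x\in L_a$ gives $a=rb$) and your check that $\xi_a$ is a frame spell out details the paper leaves implicit, in particular in its remark that changing the embedding by $q$ replaces $a$ by $q^{-1}a$.
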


\begin{proof}
\emph{Descent to Quotient.} 
Let $q \in \Q^\times$. Consider the adele $a' = qa$.
The associated group is $L_{qa} = q^{-1} L_a$.
The map $\phi: L_a \to L_{qa}$ given by multiplication by $q^{-1}$ is an isomorphism of groups.
\begin{itemize}[leftmargin=*, labelindent=0pt]
    \item[] Finite Frame: $\xi_{qa}(\phi(x)) = \xi_{qa}(q^{-1}x) = (q^{-1}x)(qa)_f = x a_f = \xi_a(x)$.
    \item[] Infinite Frame: $\tau_{qa}(\phi(x)) = \tau_{qa}(q^{-1}x) = (q^{-1}x)(qa)_\infty = x a_\infty = \tau_a(x)$.
\end{itemize}
Thus, the triples are isomorphic, and $\mathcal F$ descends to $Y_\Q$.\vspace{.03in}

We check the \emph{Bijectivity.}
Let $(L, \xi, \tau)$ be a framed arithmetic divisor.
Since $L$ is rank-1 torsion-free group, we can choose an embedding $\iota: L \hookrightarrow \Q$.

 By Lemma \ref{tight} $(i)$, the map $\xi$ extends to a map $\Af \to \Af$, which is multiplication by some $a_f \in \Af$. By Lemma \ref{tight} $(ii)$  the   generator condition ensures that $L$ is recovered correctly.
 
The map $\tau$ extends to a map $\Q \otimes \R \to \R$, which is multiplication by some $a_\infty \in \R$.

This yields an adele $a = (a_f, a_\infty)$.
Changing the embedding $\iota$ by a factor $q \in \Q^\times$ changes $a$ to $q^{-1}a$, which preserves the class in $Y_\Q$.
Thus, the correspondence is one-to-one.
\end{proof}

\subsubsection{The monoid structure}

The set of framed arithmetic divisors carries a natural tensor product structure.
Given $D_1 = (L_1, \xi_1, \tau_1)$ and $D_2 = (L_2, \xi_2, \tau_2)$, we define $D_1 \otimes D_2 = (L, \xi, \tau)$ by:
\begin{itemize}[leftmargin=*, labelindent=0pt]
    \item[] $L = L_1 \otimes_\Z L_2$.
    \item[] $\xi = \xi_1 \otimes \xi_2$ (product in $\widehat{\Z}$).
    \item[] $\tau = \tau_1 \otimes \tau_2$ (product in $\R$).
\end{itemize}
Explicitly, for the infinite part one has: $\tau(x \otimes y) = \tau_1(x) \tau_2(y)$.

\begin{thm}
The sets bijection $Y_\Q \cong \MFr(\spzb)$ is an isomorphism of monoids.
If $a, b \in \A$ correspond to arithmetic framed divisors $D_a, D_b$, then the product $ab$ corresponds to the tensor product of arithmetic divisors $D_a \otimes D_b$.
\end{thm}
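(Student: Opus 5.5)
We must show that $\mathcal F(ab)\cong\mathcal F(a)\otimes\mathcal F(b)$ for all $a,b\in\A$. Since $\mathcal F$ already descends to a bijection $Y_\Q\cong\MFr(\spzb)$, this gives the monoid isomorphism. We also check that the tensor product of framed divisors is again a framed divisor and is compatible with isomorphisms.

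\emph{Step 1 (the candidate isomorphism).} Set $L_1=L_a$ and $L_2=L_b$, both inside $\Q$. Use the multiplication map
\[
\mu:L_a\otimes_\Z L_b\to\Q,\qquad x\otimes y\mapsto xy .
\]
It is injective because the tensor product of rank-1 torsion-free groups is torsion-free of rank one and $\mu$ is nonzero. Its image is the product group $L_a\cdot L_b$.

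\emph{Step 2 (the underlying groups).} Show $L_a\cdot L_b=L_{ab}$. By Proposition~\ref{ident} and Remark~\ref{ade}, the divisor of $L_{ab}$ is $\Phi(ab)=\Phi(a)+\Phi(b)$. By the proposition identifying $H_{D_1+D_2}$ with $H_1\cdot H_2$, this group equals $L_a\cdot L_b$. So $\mu$ is an isomorphism $L_a\otimes L_b\to L_{ab}$.

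\emph{Step 3 (the frames).} Check that $\mu$ carries each frame to the right place.
\begin{itemize}
\item Finite frame: $\xi_{ab}(\mu(x\otimes y))=xy\,a_fb_f=(xa_f)(yb_f)=\xi_a(x)\xi_b(y)$, so $\xi_{ab}\circ\mu=\xi_a\otimes\xi_b$.
\item Infinite frame: $\tau_{ab}(xy)=xy\,a_\infty b_\infty=\tau_a(x)\tau_b(y)$.
\end{itemize}
Hence $\mu$ is an isomorphism of triples in the sense of Definition~\ref{moduli}.

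\emph{Step 4 (the tensor product is a framed divisor).} For general $D_1,D_2$, choose adelic representatives via the bijection already proved. Then $D_1\otimes D_2$ is isomorphic, by Steps 1–3, to $\mathcal F(ab)$. This triple satisfies the generator condition by Lemma~\ref{tight}(ii), because $L_{ab}=\{q\in\Q\mid (ab)_f q\in\Zhat\}$. So $\xi_1\otimes\xi_2$ generates $\operatorname{Hom}(L_1\otimes L_2,\Zhat)$.

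The tensor product is well defined on isomorphism classes: given isomorphisms $\phi_i$, the map $\phi_1\otimes\phi_2$ intertwines the frames. Since $\mathcal F$ is bijective and multiplicative, the product on $\MFr(\spzb)$ is associative and commutative, and its unit is $\mathcal F(1)=(\Z,\iota,\mathrm{incl})$.

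\emph{Main obstacle.} The delicate point is Step 4, the generator condition for $\xi_1\otimes\xi_2$. It must be checked prime by prime as in the proof of Lemma~\ref{tight}, using $v_p(\alpha_p\beta_p)=n_p+m_p$. At primes where $n_p+m_p=\infty$ it is automatic, because the local Hom module vanishes. Routing the argument through the adelic representatives handles this uniformly.
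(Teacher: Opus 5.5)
Your proposal is correct and follows the paper's own proof. Both identify $L_a\otimes_\Z L_b$ with $L_{ab}$ via multiplication in $\Q$ and then check that $\xi_a\otimes\xi_b$ and $\tau_a\otimes\tau_b$ match $\xi_{ab}$ and $\tau_{ab}$. Your extra steps fill in points the paper leaves implicit: the justification of $L_a\cdot L_b=L_{ab}$ through divisors, the generator condition for $\xi_1\otimes\xi_2$, and well-definedness of the product on isomorphism classes.
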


\begin{proof}
Let $a, b \in \A$.
The product adele is $ab = (a_f b_f, a_\infty b_\infty)$.
The divisor for the product is defined by:
\begin{itemize}[leftmargin=*, labelindent=0pt]
    \item[] $\xi_{ab}(x) = x (a_f b_f)$.
    \item[] $\tau_{ab}(x) = x (a_\infty b_\infty)$.
\end{itemize}
Consider the tensor product $D_a \otimes D_b$.
The map $L_a \otimes L_b \to L_{ab}$ given by multiplication in $\Q$ is an isomorphism.
Under this identification:
\begin{itemize}[leftmargin=*, labelindent=0pt]
    \item[] The finite frame acts as $\xi_a(x) \xi_b(y) = (x a_f)(y b_f) = (xy) (a_f b_f)$, matching $\xi_{ab}$.
    \item[] The infinite frame acts as $\tau_a(x) \tau_b(y) = (x a_\infty)(y b_\infty) = (xy) (a_\infty b_\infty)$, matching $\tau_{ab}$.
\end{itemize}
Thus, the structures coincide.
\end{proof}
\subsection{Rooted arithmetic divisors and the moduli $\MRt(\spzb)$}

 Given a torsion free rank one group $L$ we let $L^\vee:= \Hom(L,\Q/\Z)$ be its dual (see Appendix \ref{appendual}). Given   a frame $\xi \in \operatorname{Hom}(L, \widehat{\Z})$ the dual frame   $\xi^\vee\in \Hom(\Q/\Z,L^\vee)$ specifies a coherent system of roots of unity, more precisely of torsion elements belonging to $\rm{Tor}(L^\vee)$. 
We introduce the notion of a \emph{rooted arithmetic divisor} to  reflect this fact.

\subsubsection{The Support of a divisor}

\begin{defn}
Let $L$ be a torsion free rank one group. Let $S$ be the set of primes where $L$ is divisible (\ie the \emph{singular} primes). 
We define the subgroup of supported roots of unity as follows:
\[
\mu_{(S)} := \bigoplus_{p \notin S} \Q_p/\Z_p \subset \Q/\Z.
\]
Equivalently, $\mu_{(S)}$ is the subgroup of roots of unity whose order is prime to all $p \in S$.
\end{defn}

\subsubsection{The isomorphism condition}

\begin{defn}
A root on a torsion free rank one group $L$ is a homomorphism
\[
\rho: \Q/\Z \longrightarrow \rm{Tor}(L^\vee)
\]
satisfying the following two conditions:
\begin{enumerate}
    \item \emph{Vanishing on singularities:} $\rho$ vanishes on the singular components:
    \[
    \ker(\rho) \supseteq \bigoplus_{p \in S} \Q_p/\Z_p.
    \]
    \item \emph{Isomorphism on support:} The restriction of $\rho$ to the supported subgroup induces an isomorphism:
    \[
    \rho|_{\mu_{(S)}}: \mu_{(S)} \xrightarrow{\sim} \rm{Tor}(L^\vee).
    \]
\end{enumerate}
\end{defn}
Let $\xi \in \operatorname{Hom}(L, \widehat{\Z})$. The Pontryagin dual $\xi^*$ of $\xi$ is a group homomorphism from the Pontryagin dual $\Q/\Z$ of $\widehat{\Z}$ to the Pontryagin dual $L^*$ of $L$. By Proposition \ref{dualities} of Appendix \ref{appendual}, the  range of $\xi^*$ is contained in $L^\vee:= \Hom(L,\Q/\Z)$, thus this defines the dual map 
\begin{equation}\label{dualmap}
\xi^\vee\in \Hom(\Q/\Z,L^\vee), \ \ \xi^\vee(r)=\xi^*(r), \ \forall r\in \Q/\Z.
\end{equation}
\begin{prop}\label{isodual}
Let $\xi \in \operatorname{Hom}(L, \widehat{\Z})$. The following statements are equivalent:
\begin{enumerate}
    \item $\xi$ is a generator of the $\widehat\Z$-module (\ie  $\xi$ is a frame).
    \item The dual map $\xi^\vee$ satisfies the isomorphism condition 2. above.
\end{enumerate}
\end{prop}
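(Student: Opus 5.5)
Both conditions will be reduced to the same prime-by-prime statement about the local multipliers $\alpha_p$, and then compared. By Lemma~\ref{tight} $(i)$, after choosing an embedding $L\subset\Q$, the homomorphism $\xi$ is multiplication by a unique $a_f=(\alpha_p)\in\A_f$. The proof of Lemma~\ref{tight} $(ii)$ already gives
\[
\xi \text{ is a frame} \iff v_p(\alpha_p)=n_p \text{ for every prime } p \text{ with } n_p<\infty,
\]
where $D=\sum n_p[p]$ is the divisor of $L$. The singular set $S$ is exactly $\{p\mid n_p=\infty\}$, and for $p\in S$ one has $\alpha_p=0$ automatically. So it remains to show that the isomorphism condition 2.\ for $\xi^\vee$ is equivalent to $v_p(\alpha_p)=n_p$ for all $p\notin S$.

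First I will make $\xi^\vee$ explicit and decompose it into primary components. Both $\Q/\Z=\bigoplus_p\Q_p/\Z_p$ and $\mathrm{Tor}(L^\vee)$ split into $p$-primary parts. By the appendix, $\mathrm{Tor}(L^\vee)\cong\bigoplus_{p\notin S}\Q_p/\Z_p$, and I will use its adelic presentation $L^\vee\cong\A_f/L^\perp$ (Proposition~\ref{dualities}). By duality, $\xi^\vee(r)$ is the character $x\mapsto \langle r,\xi(x)\rangle=r\cdot\xi(x)\bmod\Z$. For $r\in\Q_p/\Z_p$ this equals $r\,\alpha_p x \bmod \Z_p$, so $\xi^\vee$ respects the primary decomposition, and its $p$-component is
\[
\xi^\vee_p:\Q_p/\Z_p\to \mathrm{Hom}(L,\Q_p/\Z_p)_{\rm tor}, \qquad r\mapsto (x\mapsto \alpha_p r x).
\]
For $p\notin S$ one has $L\otimes\Z_p=p^{-n_p}\Z_p$, so $\mathrm{Hom}(L,\Q_p/\Z_p)\cong\mathrm{Hom}(p^{-n_p}\Z_p,\Q_p/\Z_p)$. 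Evaluating at the generator $p^{-n_p}$ identifies this group with $\Q_p/\Z_p$, and under this identification $\xi^\vee_p$ becomes multiplication by $u_p:=\alpha_p p^{-n_p}\in\Z_p$.

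The key local lemma is then that multiplication by $u\in\Z_p$ on $\Q_p/\Z_p$ is an isomorphism if and only if $u\in\Z_p^\times$. If $u$ is a unit, the inverse is multiplication by $u^{-1}$. If $v_p(u)\ge1$ (including $u=0$), the element $p^{-1}$ lies in the kernel. Since a direct sum of maps is an isomorphism exactly when each summand is, $\xi^\vee|_{\mu_{(S)}}$ is an isomorphism onto $\mathrm{Tor}(L^\vee)$ if and only if $v_p(\alpha_p)=n_p$ for all $p\notin S$. This matches the criterion from Lemma~\ref{tight}.

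The main obstacle is to justify rigorously that $\xi^\vee$ lands in $\mathrm{Tor}(L^\vee)$ and respects the primary decomposition. This requires checking that the $p$-primary torsion of $L^\vee$ is exactly the $p$-part $\mathrm{Hom}(L,\Q_p/\Z_p)$, which vanishes for $p\in S$. Here I would rely on the appendix's identification $\mathrm{Tor}(L^\vee)=\bigoplus_{p\notin S}\Q_p/\Z_p$ and check that it is compatible with the formula $r\mapsto r\alpha_p x$. As a side remark, the vanishing condition 1.\ is automatic because $\alpha_p=0$ for $p\in S$, which is consistent with the statement only involving condition 2.
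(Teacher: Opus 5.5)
Your proposal is correct and follows essentially the same route as the paper: reduce the frame condition to $v_p(\alpha_p)=n_p$ for $p\notin S$ via Lemma~\ref{tight}, split $\mathrm{Tor}(L^\vee)$ into local pieces over $p\notin S$ using Proposition~\ref{proptor}, and check that $\xi^\vee_p$ is multiplication by $\alpha_p$, which is an isomorphism exactly when the valuations match. Your normalization by $p^{-n_p}$ instead of the paper's $\Q_p/a_p\Z_p$ is only cosmetic; one small slip in your last paragraph is that for $p\in S$ it is the torsion of $\mathrm{Hom}(L,\Q_p/\Z_p)\cong\Q_p$ that vanishes, not the group itself, so the $p$-primary torsion of $L^\vee$ is $\mathrm{Hom}(L,\Q_p/\Z_p)_{\rm tor}$, as you correctly wrote earlier.
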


\begin{proof}
Let $L\subset \Q$, $D=(n_p)$ its associated divisor, and $S$  the set of all primes with $n_p=\infty$. Let $a=(a_p)\in \A_f$ be a finite adele such that $L=\{q\in \Q\mid aq\in \Zhat\}$. By Proposition \ref{proptor} 
the torsion subgroup of the dual $L^\vee$ is the subgroup :
\[
\rm{Tor}(L^\vee) = \bigoplus_{p \notin S} \Q_p/a_p\Z_p \subset L^\vee
\]
By Lemma \ref{tight}, the map $\xi$ is given locally at a prime $p \notin S$ by multiplication by a non-zero element  $\alpha_p\in \Q_p$ and condition 1. is equivalent to $v_p(\alpha_p)=n_p=v_p(a_p)$ for all $p\notin S$. 
The map $\xi^\vee_p: \Q_p/\Z_p \to \Q_p/a_p\Z_p$ is the multiplication by  $\alpha_p\in \Q_p$.
Thus, $\xi^\vee_p$ is an isomorphism if and only if $v_p(\alpha_p) = v_p(a_p)$.
This is precisely the condition for $\xi_p$ to be a generator of $\operatorname{Hom}(L, \Z_p)$.
\end{proof}

\begin{defn}
A \emph{rooted arithmetic divisor} is a triple $(L, \rho, \tau)$ consisting of a torsion free rank-1 group $L$ equipped with a root $\rho$ and a morphism $\tau:L\to \R$.
\end{defn}

The term rooted associated to an arithmetic divisor  refers to the action of the map $\rho$ on the standard roots of unity. 
Let $\zeta_n = e^{2\pi i / n} \in \Q/\Z$ be a primitive $n$-th root of unity.
The map $\rho$ assigns to $\zeta_n$ a character $\chi_n \in \operatorname{Hom}(L, \Q/\Z)$.
This character can be viewed as an arithmetic root of unity on the group $L$, where
\begin{itemize}[leftmargin=*, labelindent=0pt]
    \item[] For $L=\Z$, $\rho$ assigns a specific root of unity on the circle to the abstract generator of the dual.
    \item[] Over a periodic orbit (\ie for $L=\Z[1/p]$), $\rho$ assigns roots for orders prime to $p$, effectively rooting the divisor at the unramified places.
\end{itemize}

In other words, a rooted arithmetic divisor is an arithmetic divisor where the phase ambiguity (Galois ambiguity) has been resolved by fixing a specific configuration of roots of unity.\vspace{.08in}

Let $a = (a_f, a_\infty) \in \A$ be a rational adele. We associate to it the triple 
\begin{equation}\label{tau}
\sigma(a)=(L_a, \rho_a, \tau_a),
\end{equation}
where:
\begin{enumerate}
    \item $L_a = \{ q \in \Q \mid a_f q \in \widehat{\Z} \}$.
    \item The morphism $\tau_a$ is defined as $\tau_a(q) = a_\infty q\in \R$.
    \item $\rho_a=\xi_a^\vee$ where $\xi_a: L_a\to \widehat{\Z}$,    $\xi_a(x) = x a_f$.
\end{enumerate}
Proposition \ref{isodual} shows that the duality functor $D(\xi):=\xi^\vee$ gives a bijective correspondence between framed divisors and rooted divisors. We now investigate the meaning of the product in terms of rooted divisors.

\subsection{Tensor product of roots via character multiplication}

We formalize the tensor product of rooted divisors using the multiplicative structure of the finite rings $\Z/m\Z$. This approach naturally handles both the composition of frames and the vanishing on singular sets.

\subsubsection{The character pairing}

Let $L_1, L_2$ be two torsion-free abelian groups. Fix an integer $m \ge 1$.
The ring structure of $\Z/m\Z$ induces a canonical bilinear map:
\[
\Phi_m: \operatorname{Hom}(L_1, \Z/m\Z) \times \operatorname{Hom}(L_2, \Z/m\Z) \longrightarrow \operatorname{Hom}(L_1 \otimes L_2, \Z/m\Z)
\]
defined by the pointwise product of values:
\[
\Phi_m(\chi_1, \chi_2)(x \otimes y) := \chi_1(x) \cdot \chi_2(y) \pmod m.
\]
(This is well-defined because the map $(u,v) \mapsto uv$ is bilinear over $\Z/m\Z$).

\subsubsection{Product of roots}

Recall that the torsion subgroup of the dual of a torsion-free abelian group $L$  is the colimit of the finite levels:
\[
\rm{Tor}(L^\vee) \cong \varinjlim_m \operatorname{Hom}(L, \Z/m\Z).
\]
A root $\rho$ is determined by the family of characters $\rho(\frac{1}{m}) \in \operatorname{Hom}(L, \Z/m\Z)$. This family is consistent with the relations in the group $\Q/\Z$.
Specifically, for any integers $n$ and $k$, the element $1/n$ in $\Q/\Z$ is equal to $k \cdot (1/nk)$.
Therefore, any root $\rho$ must satisfy the consistency condition:
\begin{equation}\label{consist}
\rho(1/n) = k \cdot \rho(1/nk).
\end{equation}

\begin{prop}
Let $(L_1, \rho_1)$ and $(L_2, \rho_2)$ be two rooted divisors.
The following equation defines a  root $\rho$ for the tensor product $L_1 \otimes L_2$  by the pairing of the characters at each level $m$:
\begin{equation}\label{tensor}
\rho\left(\frac{1}{m}\right) = \Phi_m\left( \rho_1\left(\frac{1}{m}\right), \, \rho_2\left(\frac{1}{m}\right) \right).
\end{equation}
Explicitly, for any $x \in L_1, y \in L_2$:
\[
\left\langle \rho\left(\frac{1}{m}\right), x \otimes y \right\rangle = \left\langle \rho_1\left(\frac{1}{m}\right), x \right\rangle \cdot \left\langle \rho_2\left(\frac{1}{m}\right), y \right\rangle \in \Z/m\Z.
\]
\end{prop}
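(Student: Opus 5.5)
The plan is to avoid checking the root axioms for $\rho$ directly and instead use duality with frames. I would show that the level-wise product \eqref{tensor} is the dual of the tensor product of frames, and that this tensor product of frames is again a frame. Checking the consistency relation \eqref{consist} directly is awkward: the transition maps $\Z/n\Z\to\Z/nk\Z$, $u\mapsto ku$, are not multiplicative, so a naive level-wise check produces a spurious factor $k^2$. Passing through $\widehat\Z$ removes this difficulty.

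\emph{Step 1 (every root is dual to a frame).} Embed $L_i\subset\Q$ and let $S_i$ be its singular set. By Proposition~\ref{proptor}, $\mathrm{Tor}(L_i^\vee)=\bigoplus_{p\notin S_i}\Q_p/a_p\Z_p$. A root $\rho_i$ kills $\Q_p/\Z_p$ for $p\in S_i$ and restricts to an isomorphism $\Q_p/\Z_p\to\Q_p/a_p\Z_p$ for $p\notin S_i$. Such an isomorphism is multiplication by some $\alpha_p\in\Q_p$ with $v_p(\alpha_p)=v_p(a_p)$, because $\mathrm{Aut}(\Q_p/\Z_p)=\Z_p^\times$. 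Put $\alpha_p=0$ for $p\in S_i$. Then $\alpha=(\alpha_p)$ is a finite adele, since $v_p(a_p)=0$ for almost all $p$. By Lemma~\ref{tight} it defines a frame $\xi_i(x)=\alpha x$, and the computation in the proof of Proposition~\ref{isodual} gives $\xi_i^\vee=\rho_i$. The vanishing condition on singular components is automatic for duals of frames, since $\operatorname{Hom}(L,\Z_p)=0$ when $p\in S$. So roots are exactly the duals $\xi^\vee$ of frames.

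\emph{Step 2 (level-$m$ formula for $\xi^\vee$).} Identify $\operatorname{Hom}(L,\Z/m\Z)$ with the $m$-torsion of $L^\vee$ through $u\mapsto u/m$. Under Pontryagin duality for $\widehat\Z\times\Q/\Z$, the character $\xi^\vee(1/m)$ sends $x$ to $\xi(x)/m \bmod \Z$. Hence $\xi^\vee(1/m)$ is the character $x\mapsto \xi(x)\bmod m$. Consistency \eqref{consist} is then clear, since reducing $\xi(x)\in\widehat\Z$ modulo $nk$ and then modulo $n$ is compatible with $1/n=k\cdot(1/nk)$.

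\emph{Step 3 (the product).} Write $\rho_i=\xi_i^\vee$ and set $\xi=\xi_1\otimes\xi_2$, so that $\xi(x\otimes y)=\xi_1(x)\xi_2(y)\in\widehat\Z$. Reduction $\widehat\Z\to\Z/m\Z$ is a ring map, so by Step 2,
$$\Phi_m\bigl(\rho_1(1/m),\rho_2(1/m)\bigr)(x\otimes y)=\xi_1(x)\xi_2(y)\bmod m=\xi^\vee(1/m)(x\otimes y).$$
Thus the family \eqref{tensor} equals $\xi^\vee$. In particular it is consistent, and it comes from a well-defined homomorphism $\Q/\Z\to \mathrm{Tor}((L_1\otimes L_2)^\vee)$.

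It remains to see that $\xi$ is a frame. Choose adeles with $\xi_i=\xi_{a_i}$, which is possible by Lemma~\ref{tight}. Under $L_{a_1}\otimes L_{a_2}\cong L_{a_1a_2}$, the map $\xi$ becomes $\xi_{a_1a_2}$, as in the proof that $Y_\Q\cong\MFr(\spzb)$ is a monoid isomorphism. By Lemma~\ref{tight}(ii) it is therefore a generator. Proposition~\ref{isodual} together with Step 1 then shows that $\xi^\vee=\rho$ is a root.

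I expect Step 1 to be the main obstacle: the surjectivity of $\xi\mapsto\xi^\vee$ onto roots, together with the identification of $\mathrm{Aut}(\Q_p/\Z_p)$ and the restricted-product condition on $\alpha$. Step 2 is a routine check of the duality conventions.
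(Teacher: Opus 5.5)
Your proof is correct, but it is organized differently from the paper's.

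\textbf{The paper's route.} The paper verifies only the consistency relation \eqref{consist}, and does so directly. Write $u_m(x)\in\Z/m\Z$ for the value of $\rho_1(1/m)$ at $x$, via $\frac1m\Z/\Z\cong\Z/m\Z$. Then $\rho_1(1/n)=k\,\rho_1(1/nk)$ becomes $u_n(x)\equiv u_{nk}(x)\pmod n$. This is compatibility under the reduction maps $\Z/nk\Z\to\Z/n\Z$, so the family is a point of $\varprojlim\Z/m\Z\cong\Zhat$. Since reductions are ring maps, the level-wise products $u_m(x)v_m(y)$ are again compatible. So the factor $k^2$ you anticipate never appears. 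It arises only if one tests compatibility against the colimit inclusions $u\mapsto ku$ instead of the reductions, and your Step~2 is in fact the same observation.

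\textbf{What your route buys.} It gives completeness. The paper's proof does not check that the product satisfies the two root axioms: vanishing on singular primes is remarked afterwards, and the isomorphism on support is not addressed at that point. Your argument does show that $\rho$ is genuinely a root. It combines three ingredients:
\begin{itemize}
\item your Step~3, which is essentially the paper's later Lemma~\ref{dualtensor};
\item closure of frames under tensor product, via $L_{a_1}\otimes L_{a_2}\cong L_{a_1a_2}$ and Lemma~\ref{tight}(ii);
\item Proposition~\ref{isodual}.
\end{itemize}

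\textbf{The price, and a shortcut.} The price is Step~1. Your hands-on argument is fine, but there is a shorter way. Proposition~\ref{dualities} and Pontryagin duality give
\[
\operatorname{Hom}(\Q/\Z,\mathrm{Tor}(L^\vee))=\operatorname{Hom}(\Q/\Z,L^*)\cong\operatorname{Hom}(L,\Zhat).
\]
So $\xi\mapsto\xi^\vee$ is a bijection onto all homomorphisms $\Q/\Z\to\mathrm{Tor}(L^\vee)$. Any such homomorphism automatically kills $\Q_p/\Z_p$ for $p\in S$, because $\mathrm{Tor}(L^\vee)$ has no $p$-primary part there. Proposition~\ref{isodual} then singles out the roots as exactly the duals of frames.
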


\begin{proof} We verify that the definition \eqref{tensor} of the product root fulfills the consistency condition \eqref{consist}. 
We check that if $\rho_1$ and $\rho_2$ satisfy this condition, their product $\rho$ (defined level-wise) also satisfies it.
Let $N = nk$.
We represent the values of the roots by elements in the finite rings:
Let $u_m \in \Z/m\Z$ represent the value $\rho_1(1/m)$ (via the identification $\operatorname{Hom}(L, \Z/m\Z) \cong \Z/m\Z \otimes L^\vee \dots$ effectively treating them as scalars for this check).
The condition $\rho_1(1/n) = k \rho_1(1/N)$ translates to the congruence of representatives:
\[
\frac{u_n}{n} \equiv k \frac{u_N}{N} \pmod 1 \implies \frac{u_n}{n} \equiv \frac{u_N}{n} \pmod 1 \implies u_n \equiv u_N \pmod n.
\]
Thus, the sequence of values $(u_m)_m$ forms an element of the projective limit $\varprojlim (\Z/m\Z) \cong \widehat{\Z}$.
Let $(u_m)$ and $(v_m)$ be the sequences for $\rho_1$ and $\rho_2$.
We have:
\[
u_n \equiv u_N \pmod n \quad \text{and} \quad v_n \equiv v_N \pmod n.
\]
The product root $\rho$ is defined by the product in the ring $\Z/m\Z$:
\[
w_m := u_m \cdot v_m \pmod m.
\]
We must verify that $w_n \equiv w_N \pmod n$.
Compute the reduction of $w_N$ modulo $n$:
\[
w_N \pmod n = (u_N \cdot v_N) \pmod n = (u_N \pmod n) \cdot (v_N \pmod n).
\]
Using the hypothesis:
\[
=u_n \cdot v_n = w_n.
\]
Thus, the consistency condition holds.
\end{proof}

\begin{rem}
This calculation reveals that the transition maps relating the values at level $N$ to level $n$ are simply the canonical projections $\pi: \Z/N\Z \to \Z/n\Z$. Since these projections are ring homomorphisms, they preserve the multiplicative structure defined at each level.
\end{rem}

\subsubsection{Automatic handling of singularities}

Next formula automatically reproduces the filtering behavior of the singular sets.\vspace{.05in}

Let $p$ be a singular prime for $L_1$ (\ie $p \in S_1$).
Then $L_1$ is $p$-divisible, so for any power $q=p^k$, there are no non-zero homomorphisms from $L_1$ to $\Z/q\Z$:
\[
\operatorname{Hom}(L_1, \Z/p^k\Z) = 0 \implies \rho_1\left(\frac{1}{p^k}\right) = 0.
\]
Consequently, the product character is zero:
\[
\rho\left(\frac{1}{p^k}\right) = 0 \cdot \rho_2\left(\frac{1}{p^k}\right) = 0.
\]
This confirms that if $p \in S_1 \cup S_2$, then the product root $\rho$ vanishes on the $p$-power roots of unity.
Thus, the support of the product root is exactly the intersection of the supports:
\[
\operatorname{Supp}(\rho) = \operatorname{Supp}(\rho_1) \cap \operatorname{Supp}(\rho_2).
\]
\begin{rem}[The hidden coproduct structure]
The definition of the product root $\rho = \rho_1 \otimes \rho_2$ relies on the formula $\rho(1/n) = \rho_1(1/n) \cdot \rho_2(1/n)$, suggesting the existence of a diagonal coproduct $\Delta$ on $\Q/\Z$ satisfying $\Delta(1/n) = 1/n \otimes 1/n$.

While such a map is ill-defined in the category of abelian groups (where $\Q/\Z \otimes_\Z \Q/\Z = 0$), it finds a rigorous formulation in the category of Ind-rings.
We view $\Q/\Z$ as the direct limit of the finite rings $R_n = \Z/n\Z$. For each $n$, the multiplication map $\mu_n: R_n \otimes_\Z R_n \to R_n$ is an isomorphism of rings. Its inverse provides a diagonal map:
\[
\Delta_n: R_n \xrightarrow{\sim} R_n \otimes_\Z R_n, \quad \text{satisfying }~ \Delta_n(1_n) = 1_n \otimes 1_n.
\]
Identifying the element $1/n \in \Q/\Z$ with the unity $1_n \in R_n$, the tensor product of roots can be interpreted as the composition:
\[
\rho = (\rho_1 \otimes \rho_2) \circ \Delta,
\]
where $\Delta = \varinjlim \Delta_n$ is the limit of these diagonal maps. This structural perspective explains why the product is anchored at the generators $1/n$ (the multiplicative identities) and why it scales non-linearly for general elements $k/n$.
\end{rem}

\subsection{Compatibility of duality with tensor products}

We prove that the duality functor  $D(\xi) = \xi^\vee$ transforms the product of frames into the tensor product of roots.

\begin{lem}\label{dualtensor}
Let $(L_1, \xi_1)$ and $(L_2, \xi_2)$ be two framed arithmetic divisors.
Let $\xi = \xi_1 \otimes \xi_2$ be the product frame on $L_1 \otimes L_2$.
Let $\rho_1 = \xi_1^\vee$ and $\rho_2 = \xi_2^\vee$ be the associated roots.
Then the root associated to the product frame is the tensor product of the roots:
\[
(\xi_1 \otimes \xi_2)^\vee = \xi_1^\vee \otimes \xi_2^\vee.
\]
\end{lem}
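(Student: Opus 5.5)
The plan is to compare both sides level by level. By the definition \eqref{tensor}, a root $\rho$ on a group is determined by the family of characters $\rho(1/m)\in\operatorname{Hom}(L,\Z/m\Z)$. So it suffices to show that, for every $m\ge1$ and all $x\in L_1$, $y\in L_2$,
\[
\bigl\langle (\xi_1\otimes\xi_2)^\vee(1/m),\,x\otimes y\bigr\rangle=\bigl\langle \xi_1^\vee(1/m),x\bigr\rangle\cdot\bigl\langle\xi_2^\vee(1/m),y\bigr\rangle \in \Z/m\Z .
\]
Since pure tensors generate $L_1\otimes L_2$ and both sides are additive in $x\otimes y$, this identity at every level proves the lemma.

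The first step is to make the dual map \eqref{dualmap} explicit at level $m$. The Pontryagin pairing between $\Q/\Z$ and $\Zhat$ sends $(r,z)$ to $rz\in\Q/\Z$, and for $r=1/m$ it reads $z\mapsto (z \bmod m)/m$. Hence $\xi^\vee(1/m)$ is the character $x\mapsto \xi(x)/m \bmod 1$. Identified with an element of $\operatorname{Hom}(L,\Z/m\Z)$ via $\frac1m\Z/\Z\cong\Z/m\Z$, this is
\[
\xi^\vee(1/m)=\pi_m\circ\xi,
\]
where $\pi_m:\Zhat\to\Z/m\Z$ is reduction. This is the transpose of $\xi$ paired against $1/m$. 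Proposition~\ref{dualities} guarantees that it lands in $L^\vee$, and it is torsion because $m$ kills it, so it lies in $\rm{Tor}(L^\vee)$ as required.

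The second step is the computation itself. By definition of the product frame, $(\xi_1\otimes\xi_2)(x\otimes y)=\xi_1(x)\xi_2(y)$, with the product taken in the ring $\Zhat$. Since $\pi_m$ is a ring homomorphism, $\pi_m(\xi_1(x)\xi_2(y))=\pi_m(\xi_1(x))\,\pi_m(\xi_2(y))$. This is exactly $\Phi_m(\xi_1^\vee(1/m),\xi_2^\vee(1/m))(x\otimes y)$. The identity therefore holds at each level, and the compatibility under the transition maps is automatic by the preceding proposition and remark.

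The main obstacle I expect is bookkeeping rather than mathematics. The identification $\operatorname{Hom}(L,\Z/m\Z)\hookrightarrow\rm{Tor}(L^\vee)$ through $\Z/m\Z\cong\frac1m\Z/\Z$ has to be the same one used in defining \eqref{tensor}. Similarly, the Pontryagin dual $\xi^*$ has to be taken with the standard pairing $\langle r,z\rangle=rz$, so that $\xi^\vee(1/m)=\pi_m\circ\xi$ holds without a sign or twist. I would fix these conventions explicitly at the start, checking them on $L=\Z$ with $\xi=\mathrm{id}$, where $\rho(1/m)$ should be the reduction map $\Z\to\Z/m\Z$. As a final remark, I would note that singular primes need no separate treatment: if $L_i$ is $p$-divisible, then $\pi_{p^k}\circ\xi_i=0$, which agrees with the vanishing discussed above.
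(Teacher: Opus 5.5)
Your proposal is correct and follows essentially the same route as the paper's proof. Both check the identity on the generators $1/m$ and on pure tensors, identify $\xi^\vee(1/m)$ with $\pi_m\circ\xi$, and conclude from the fact that reduction $\pi_m\colon\Zhat\to\Z/m\Z$ is a ring homomorphism. Your derivation of $\xi^\vee(1/m)=\pi_m\circ\xi$ from the pairing $\langle r,z\rangle=rz$ only makes explicit what the paper uses directly as the definition of the dual frame.
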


\begin{proof}
It suffices to verify the equality of the maps on the generators $1/n \in \Q/\Z$ acting on pure tensors $x \otimes y \in L_1 \otimes L_2$.\vspace{.03in}

\emph{The dual of the product frame (LHS).}
Let $\rho = \xi^\vee$. By the definition of the dual frame, the pairing is given by the reduction of the frame value modulo $n$:
\[
\langle \rho(1/n), x \otimes y \rangle = \xi(x \otimes y) \pmod n.
\]
By the definition of the product frame, $\xi(x \otimes y)$ is the product in the ring $\widehat{\Z}$:
\[
\xi(x \otimes y) = \xi_1(x) \cdot \xi_2(y).
\]
Let $\pi_n: \widehat{\Z} \to \Z/n\Z$ be the canonical projection. Since $\pi_n$ is a ring homomorphism:
\[
\langle \rho(1/n), x \otimes y \rangle = \pi_n\left( \xi_1(x) \cdot \xi_2(y) \right) = \pi_n(\xi_1(x)) \cdot \pi_n(\xi_2(y)).
\]

\emph{The product of the roots (RHS).}
Let $\tilde{\rho} = \rho_1 \otimes \rho_2$. By the definition of the tensor product of roots (using the ring structure of $\Z/n\Z$) one has:
\[
\langle \tilde{\rho}(1/n), x \otimes y \rangle = \langle \rho_1(1/n), x \rangle \cdot \langle \rho_2(1/n), y \rangle \quad (\text{product in } \Z/n\Z).
\]
Using the definition of the individual roots $\rho_i = \xi_i^\vee$:
\[
\langle \rho_1(1/n), x \rangle = \xi_1(x) \pmod n = \pi_n(\xi_1(x)),
\]
\[
\langle \rho_2(1/n), y \rangle = \xi_2(y) \pmod n = \pi_n(\xi_2(y)).
\]
Substituting these back:
\[
\langle \tilde{\rho}(1/n), x \otimes y \rangle = \pi_n(\xi_1(x)) \cdot \pi_n(\xi_2(y)).
\]
Comparing the two expressions, we see they are identical for all $n$ and all pure tensors.
\[
\langle (\xi_1 \otimes \xi_2)^\vee(1/n), x \otimes y \rangle = \langle (\xi_1^\vee \otimes \xi_2^\vee)(1/n), x \otimes y \rangle.
\]
Thus, the duality map respects the tensor structure.
\end{proof}

\subsection{Canonical monoid isomorphism}

\begin{thm}\label{unif}
The duality functor $D(\xi) = \xi^\vee$ induces a canonical isomorphism of monoids between moduli of framed and rooted arithmetic divisors:
\begin{equation}\label{iso}
D: \MFr(\spzb) \xrightarrow{\sim} \MRt(\spzb), \quad (L, \xi, \tau) \longmapsto (L, \xi^\vee, \tau).
\end{equation}
This map respects the tensor structure, where the product of roots is defined via the diagonal action on the system of finite rings $\Z/n\Z$.

The map $\sigma$ in \eqref{tau} induces  a canonical isomorphism of the adelic monoid  $Y_\Q$ with the the rooted Picard monoid $\MRt(\spzb)$.
\end{thm}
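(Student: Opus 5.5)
The plan is to assemble the statement from three ingredients already available: the bijection $\mathcal F: Y_\Q\to\MFr(\spzb)$ together with its multiplicativity, Proposition~\ref{isodual}, and Lemma~\ref{dualtensor}. The proof then has two parts. First, $D$ is a well-defined bijection on isomorphism classes. Second, $D$ is multiplicative. The final assertion follows by composition, since $\sigma = D\circ\mathcal F$ holds by the very definition \eqref{tau} of $\rho_a=\xi_a^\vee$.

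\emph{Well-definedness on classes.} An isomorphism of rooted divisors $(L,\rho,\tau)\cong(L',\rho',\tau')$ is a group isomorphism $\phi:L\to L'$ with $\tau'\circ\phi=\tau$ and $\phi^\vee\circ\rho'=\rho$, where $\phi^\vee:L'^\vee\to L^\vee$ is the transpose. By naturality of $\xi\mapsto\xi^\vee$ (functoriality of Pontryagin duality, Proposition~\ref{dualities}) one has $(\xi'\circ\phi)^\vee=\phi^\vee\circ\xi'^\vee$. Hence an isomorphism of framed divisors is sent to an isomorphism of rooted divisors, and $D$ descends to moduli.

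\emph{Bijectivity.} The key point is that $\xi\mapsto\xi^\vee$ is injective on $\Hom(L,\Zhat)$ and that its image is exactly the set of roots. For injectivity, work with $L\subset\Q$ and the pairing $\langle\xi^\vee(1/n),x\rangle=\xi(x)\bmod n$ used in Lemma~\ref{dualtensor}. Knowing $\xi(x)\bmod n$ for every $n$ determines $\xi(x)\in\Zhat=\varprojlim\Z/n\Z$.

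For surjectivity, take a root $\rho$ and set $u_n(x):=\langle\rho(1/n),x\rangle\in\Z/n\Z$. The consistency relation \eqref{consist} makes $(u_n(x))_n$ a compatible family, so it defines a homomorphism $\xi:L\to\Zhat$ with $\xi^\vee=\rho$. Proposition~\ref{isodual} then shows that $\xi$ is a frame exactly when $\rho$ satisfies the isomorphism condition 2.

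It remains to check condition 1 (vanishing on singular components). This holds automatically for every $\xi^\vee$, because $\Hom(L,\Z/p^k\Z)=0$ for $p\in S$; this is the same observation as in the paragraph on automatic handling of singularities. The local description from Lemma~\ref{tight}, namely $M_p=0$ for $n_p=\infty$, confirms that nothing is lost at singular primes.

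\emph{Multiplicativity and conclusion.} The tensor product of framed divisors is sent to the tensor product of rooted divisors by Lemma~\ref{dualtensor} on the $\xi$-components, while the $\tau$-components are untouched by $D$. The unit $(\Z,\mathrm{id},\mathrm{id})$, corresponding to the adele $1$, goes to the unit. Combining this with the earlier theorem that $\mathcal F$ is a monoid isomorphism $Y_\Q\cong\MFr(\spzb)$, we conclude that $\sigma=D\circ\mathcal F$ is a monoid isomorphism onto $\MRt(\spzb)$.

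The step I expect to be the main obstacle is surjectivity. One must make sure that the pairing description $\langle\xi^\vee(1/n),x\rangle=\xi(x)\bmod n$ agrees with the Pontryagin-dual definition \eqref{dualmap}, and that a root, which is only assumed to land in $\mathrm{Tor}(L^\vee)$, really yields a compatible family in $\Hom(L,\Z/n\Z)$. I would handle this first via the adelic presentation $L^\vee\cong\A_f/L^\perp$ of the appendix, reducing the question prime by prime to maps $\Q_p/\Z_p\to\Q_p/a_p\Z_p$ given by multiplication by some $\alpha_p$.
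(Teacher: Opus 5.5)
Your proposal is correct and follows the paper's route: the paper proves the theorem by citing Proposition~\ref{isodual} for the bijection and Lemma~\ref{dualtensor} for multiplicativity, and the last assertion comes from $\sigma=D\circ\mathcal F$. Your additional checks fill in steps the paper's two-line proof leaves implicit, since Proposition~\ref{isodual} on its own only matches the frame condition with condition~2. These checks are that $\xi\mapsto\xi^\vee$ is a bijection $\operatorname{Hom}(L,\Zhat)\to\operatorname{Hom}(\Q/\Z,L^\vee)$ via the compatible families $\xi(x)\bmod n$, that condition~1 holds automatically, and that $D$ is natural on isomorphism classes.
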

\begin{proof}
By Proposition \ref{isodual},  the duality functor $D(\xi) = \xi^\vee$ gives the isomorphism \eqref{iso}.  By  Lemma \ref{dualtensor} the duality functor  preserves the monoid operation. 
\end{proof}

The above duality isomorphism $D$ justifies the introduction of the following definition:
\begin{equation}\label{}
\tspz := \MFr(\spzb) \cong \MRt(\spzb)
\end{equation}


\section{$\tspz$ and  spectral realization of zeros of $L$-functions}\label{Wtrace}

In this section we re-interpret the diagram \eqref{3floorb} as the key pullback diagram  
\begin{equation}\label{}
\begin{tikzcd}
\widetilde{\spz} \arrow[r,"\tilde\Theta"]\arrow[d,"\tilde\pi"'] & \tspz\arrow[d,"\pi"]
 \\
\spzb \arrow[r, "\Theta"] & \Jac(\spzb)
\end{tikzcd}
\end{equation} 
to explain the relationship between the geometric theory of  $\tspz$ and the spectral realization of the zeros of L-functions. We show that the trace formula of \cite{Co-zeta} admits a geometric reinterpretation as a Lefschetz trace formula for the action of the idèle class group on $\tspz$, viewed as \emph{supporting the cohomology} of the arithmetic curve.
Specifically, we show that the geometric side of Weil's explicit formula is supported on the image of the universal Abel-Jacobi (embedding) map $\widetilde{\spz} \stackrel{\tilde\Theta}{\hookrightarrow} \tspz$, with the local terms arising directly from the dynamics on the transverse space to the fibers of this embedding.\vspace{.03in}

At this point, it is essential to stress a fundamental difference between the approach pursued in this paper and the classical geometric theory of function fields.
In the function field case, the Picard group is a discrete abelian group, and the spectral information is extracted from the action of the Frobenius endomorphism on the $\ell$-torsion points of the Jacobian. The eigenvalues of the Frobenius acting on these finite dimensional vector spaces (transferred from $\Q_\ell$ to $\C$) are the zeros of the zeta function. The fixed points of iterates of the Frobenius on the curve correspond to prime divisors, yielding the zeta function via the Lefschetz trace formula.

In the number field case, on the other hand,  we work with the Picard monoid $\tspz$ and consider the action of the idèle class group $C_\Q$ by \emph{translations}.
If we were dealing with a  group (like the classical Picard group), translations would have no fixed points (except for the identity), rendering a trace formula trivial.
The key insight of this approach is instead that the adelic space $\tspz$ is not discrete  but a geometric space with rich internal structure. The action of $C_\Q$ by translation on this space is non-trivial and possesses  fixed points located on the range of the Abel-Jacobi map.

\subsection{The explicit formula as a trace formula}\label{sect8.1}
Noncommutative geometry offers powerful tools for analyzing the rich geometry of the adelic spaces $Y_{\mathbf{Q}}$ and $X_{\mathbf{Q}}$. This framework is especially compelling because it furnishes the analytic machinery required for the spectral realization of L-functions.

Let $\alpha$ be a non-trivial character of the additive group $\A / \Q$, and  $\alpha = \prod_v \alpha_v$ its decomposition into local factors. Let $h \in \mathcal{S}(C_\Q)$ be a Schwartz function with compact support  on the idele class group $C_\Q$.
The generalized Weil explicit formula relates the zeros of $L$-functions with Grössencharakter to a sum of local integrals. It takes the following form:
\begin{equation}\label{weilexplicit}
\hat{h}(0) + \hat{h}(1) - \sum_{\chi \in \widehat{C_{\Q, 1}}} \sum_{\rho \in Z_{\tilde{\chi}}} \hat{h}(\tilde{\chi}, \rho) = \sum_v \int_{\Q_v^\times}^{\prime} \frac{h(u^{-1})}{|1-u|} d^* u.
\end{equation}
Here:
\begin{itemize}
    \item The sum on the left runs over the non-trivial zeros $\rho$ of $L$-functions associated to characters $\chi$ of the compact group $C_{\Q, 1} = C_\Q / \R_+^\times$.
    \item $\hat{h}(\chi, z) = \int_{C_\Q} h(u) \chi(u) |u|^z d^*u$ is the Mellin transform.
    \item The integral $\int'$ on the right in \eqref{weilexplicit} is normalized by the local factors $\alpha_v$ following Tate's thesis.
\end{itemize}

Weil's explicit formula can be interpreted as the trace formula:
\[
\operatorname{Tr}_{\text{distr}} \left( \int_{C_\Q} h(u) \vartheta(u) d^*u \right) = \sum_v \int_{\Q_v^\times} \frac{h(u^{-1})}{|1-u|} d^* u.
\]
Here, $\vartheta(u)$ denotes the scaling action of an element $u \in C_\Q$ on the space of functions on the adelic space $Y_\Q = \A / \Q^\times$, defined by $$\vartheta(u) \xi(x) = \xi(u^{-1} x).$$

\subsection{Geometric interpretation: isotropy and fibers}\label{geomtrace}

The geometric side of the trace formula (the right-hand side of \eqref{weilexplicit}) is a sum over the places $v$ of $\Q$, which correspond to the closed points $x \in \spzb$.
In this geometric framework, these contributions are understood as localized on the range of the Abel-Jacobi map
$$
\widetilde{ \spz}\stackrel{\tilde\Theta}{\longrightarrow} \tspz.
$$
This range is the union 
 of the fibers $\pi^{-1}(x)$ of the projection $\pi: \tspz \to \Jac(\spzb)$, and each fiber contributes to the geometric side of the trace formula by $\int_{\Q_v^\times} \frac{h(u^{-1})}{|1-u|} d^* u$. 
For each place $v$, the local multiplicative group $\Q_v^\times$ embeds as a subgroup of the idele class group $\Q_v^\times \subset C_\Q$.
Geometrically, these subgroups appear as the isotropy groups of the action of $C_\Q$ on  $\tspz$. 

The contribution of a place $v$ to the trace formula arises from the action of the isotropy group $\Q_v^\times$ on the transverse space to the orbit at that point. This transverse space is identified with the local field $\Q_v$.
The appearance of the term $\frac{1}{|1-u|}$ can be justified by a formal computation of the distributional trace of the scaling operator on the local transverse space.

Consider the scaling operator $T = \vartheta(u^{-1})$ acting on functions on the line $\Q_v$ (or $\R$):
\[
T \xi(x) = \xi(u x).
\]
We can represent this operator as an integral operator with a distributional kernel $k(x, y)$:
\[
T \xi(x) = \int k(x, y) \xi(y) dy, \qquad \text{where }~ k(x, y) = \delta(u x - y).
\]
The distributional trace is then obtained by integrating the kernel along the diagonal $x=y$:
\[
\operatorname{Tr}_{\text{distr}}(T) = \int k(x, x) dx = \int \delta(u x - x) dx = \int \delta((u-1)x) dx.
\]
Using the scaling property of the delta distribution $\delta(\lambda x) = \frac{1}{|\lambda|} \delta(x)$, we obtain:
\[
\operatorname{Tr}_{\text{distr}}(T) = \frac{1}{|u-1|} \int \delta(x) dx = \frac{1}{|u-1|}.
\]
This calculation demonstrates that the geometric side of the explicit formula is a sum of local traces associated to the hyperbolic flow on the fibers over the arithmetic  Jacobian.

\subsection{The semilocal trace formula}\label{sectsemi}
In order to obtain a geometric proof of \eqref{weilexplicit}, one first notices that the functional analysis focuses on the locally compact rings  
\[
\A_S := \prod_{v \in S} \Q_v
\]
which are {\em finite} products of the local fields $\Q_v$ over a finite set $S$ of places containing the archimedean place $\infty$. Indeed, 
the function space associated with the adele class space $ Y_\mathbf{Q}$  is represented as a cross product and consists of functions defined on the adeles, specifically belonging to the Bruhat–Schwartz space. These functions are characterized by their non-trivial dependence on only finitely many places. In other words one  replaces the infinite product involved in the construction of adeles by a finite product of the local fields $\Q_v$. All the techniques and tools available in the global case  have a natural  analogue in this semilocal framework. The semilocal version of the group $\Q^\times$ is the group $\Z_S^\times$ of invertible elements in the ring $\Z_S$  of $S$-integers, where each prime $p\in S$ is inverted.  One obtains  the semilocal analogues of the global Bruhat–Schwartz algebra $\cS(\A)$ and  its cross product with $\Q^\times$,
\begin{equation}\label{semiloc1} 
\cS(\A_{S}), \  \  \cS(\A_{S})\rtimes \Z_S^\times.
\end{equation}
The key advantage of working in the semilocal framework is that for a finite product of local fields the Haar measures of the additive and multiplicative groups are comparable: this  fails at the global level. One thus obtains a natural Hilbert space $L^2( \Z_S^\times\backslash \A_{S})$  of $0$-forms (that admits a Hochschild homological interpretation) and 
then investigates the action $\theta$  of the semilocal idele class group $C_{\Q,S}=\Z_S^\times\backslash \A_{S}^\times$ on  $L^2( \Z_S^\times\backslash \A_{S})$:
    \begin{equation}\label{scalingS}
(\theta(u) \, \xi) (a) = \xi (u^{-1} a), \ \ \  \forall
a \in  \Z_S^\times\backslash \A_{S}, \ \forall \xi\in L^2( \Z_S^\times\backslash \A_{S}).
\end{equation}
The local to global inclusion of the multiplicative groups  $\Q_v^\times\subset C_\Q$ of the local fields $\Q_v$ in the idele class group continues to make sense in the semilocal case and gives canonical inclusions $\Q^\times_v\subset C_{\Q,S}$ for $v\in S$.

To prove the finiteness of the trace, one  performs a cutoff in phase space, \ie by limiting both $p,q$ to absolute value $\leq \lambda$.  To be more precise, let $P_\lambda$ be the projection, in the Hilbert space $L^2( \Z_S^\times\backslash \A_{S})$ on the subspace of functions which vanish for  $\vert a\vert >\lambda$, and $\widehat{P_\lambda}$ its conjugate by the Fourier transform.  The cutoff  is achieved by the operator $R_\lambda = \widehat P_\lambda  P_\lambda$ where $ P_\lambda$ effects the cutoff $\vert q\vert \leq \lambda$ and $\widehat P_\lambda$ effects the cutoff $\vert p\vert \leq \lambda$. The semilocal trace formula expresses the trace of the cutoff scaling action as follows. Let $h$ be a test function with compact support on $C_{\Q,S}$ and $\theta(h):=\int h(u)\theta(u)d^*u$, then:
\begin{equation}\label{semilocTrace}
\Tr (\theta(h)R_\lambda) = 2h (1) \log \lambda + \sum_{v \in S}
\int'_{\Q^\times_v} \frac{h(u^{-1}) }{ \vert 1-u \vert} \, d^* u + o(1).
\end{equation}
This is the original formulation of the semilocal trace formula in \cite{Co-zeta}.  The divergent term $2h (1) \log \lambda$ signals the presence of the white light coming from the (trace of the) regular representation. In the perspective of this paper this is the contribution of the image of the generic point $\eta\in \spzb$ by the Abel-Jacobi map. 

\subsection{Sheaves over $\spz$}\label{sectshespec}
The semilocal algebras $\mathcal{S}\left(\mathbb{A}_S\right) \rtimes \mathbf{Z}_S^{\times}$ are constructed as the cross products of the Bruhat-Schwartz algebras $\mathcal{S}(\mathbb{A}_S)$ of functions on semilocal adeles, by the multiplicative groups $\mathbf{Z}_S^{\times}=\mathbf{G}_m(S^c)$. These are groups of sections of the sheaf $\mathbf{G}_m$ on the Zariski open set complement of the closed subset $S\subset \Spec \mathbf{Z}$.
It is a remarkable fact that the semilocal algebras \eqref{semiloc1} form a sheaf of algebras over $\spz$.  One thus obtains two sheaves ${\mathscr{O} }$ and  ${\mathscr{O} }\rtimes \mathbf{G}_m$ on $\spz$ whose sections, for any finite set of places $S\ni \infty$, are
\begin{equation}\label{semiloc2} 
{\mathscr{O} }(S^c)=\cS(\A_{S}), \  \  \left({\mathscr{O} }\rtimes \mathbf{G}_m\right)(S^c)= \cS(\A_{S})\rtimes \Z_S^\times.
\end{equation} 
These sections  encapsulate fully the semilocal framework, and one finds that the global algebras are simply given by the stalk of these sheaves at the generic point   $\eta\in \spz$.

We recall from \cite{CC3} the following result which establishes the compatibility of the noncommutative geometric constructions with the algebraic geometry of $\spzb$.

\begin{thm}
(1) The algebraic cross product $\mathscr{O} \rtimes \mathbf{G}_m$ defines a sheaf of algebras on $\spz$ such that for every finite set of places $S \ni \infty$

$$
\left(\mathscr{O} \rtimes \mathbf{G}_m\right)\left(S^c\right)=\mathcal{S}\left(\mathbb{A}_S\right) \rtimes \mathbf{Z}_S^{\times}.
$$

(2) The stalk of $\mathscr{O} \rtimes \mathbf{G}_m$ at the generic point $\eta\in \spz$ is the global cross product $\mathcal{S}\left(\mathbb{A}_{\mathbf{Q}}\right) \rtimes \mathbf{Q}^{\times}$.

(3) The global sections of $\mathscr{O} \rtimes \mathbf{G}_m$ on $\spz$ form the cross product $\mathcal{S}(\mathbf{R}) \rtimes\{ \pm 1\}$.
\end{thm}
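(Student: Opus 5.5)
The plan is to first write down the presheaf structure explicitly, then check the sheaf axioms using the special shape of the Zariski topology of $\spz$, and finally read off the stalk and the global sections from direct limits.

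\emph{Restriction maps.} The nonempty open subsets of $\spz$ are exactly the complements $S^c$ of finite sets of primes. I will include $\infty$ in $S$ throughout, so that $\A_S$ makes sense. For $S\subset S'$ I define the restriction $\cS(\A_S)\to \cS(\A_{S'})$ by
\[
f\mapsto f\otimes \bigotimes_{p\in S'\setminus S} 1_{\Z_p},
\]
using $\cS(\A_{S'})=\cS(\A_S)\otimes\bigotimes_{p\in S'\setminus S}\cS(\Q_p)$. On the group side I use the inclusion $\Z_S^\times\subset \Z_{S'}^\times$. These maps are compatible with the crossed product relations. The reason is that an element $u\in\Z_S^\times$ is a unit in $\Z_p$ for every $p\notin S$, so the action of $u$ fixes $1_{\Z_p}$. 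Transitivity of the restrictions is clear, so we obtain a presheaf of algebras. I also check that each restriction map is injective on the crossed product: write an element as a finite sum $\sum_g f_g\,g$ with distinct group elements $g$; the restriction acts on each coefficient by the injective map $f\mapsto f\otimes 1_{\Z_p}$ and leaves the group elements distinct.

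\emph{Sheaf axioms.} Any two nonempty opens intersect, and any open cover of a cofinite open set has a finite subcover. Hence it suffices to treat finite covers $S^c=\bigcup_i S_i^c$ with $S=\bigcap_i S_i$. Locality follows from the injectivity just established. For gluing, I take compatible sections $\sum_g f^{(i)}_g g$ and reduce to two opens by induction. By injectivity and by comparing group elements, the two sections have the same group support $g$. These $g$ lie in
\[
\Z_{S_1}^\times\cap\Z_{S_2}^\times=\Z_{S_1\cap S_2}^\times,
\]
by unique factorization. The key step, which I expect to be the main obstacle, is the following factorization lemma: a Bruhat--Schwartz function on $\A_{S_1\cup S_2}$ that lies both in $\cS(\A_{S_1})\otimes 1$ and in $\cS(\A_{S_2})\otimes 1$ lies in $\cS(\A_{S_1\cap S_2})\otimes 1$. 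I would prove it via an intrinsic characterization. A function $F$ has the form $f\otimes 1_{\Z_p}$ if and only if $F$ is supported in $\Z_p$ in the $p$-coordinate and is invariant under translation by $\Z_p$ in that coordinate. In that case $f$ is recovered as the restriction of $F$ to $x_p=0$, and $f$ is again Schwartz. Each $p\in (S_1\cup S_2)\setminus(S_1\cap S_2)$ lies outside $S_1$ or outside $S_2$, so the corresponding condition holds for every such $p$. Peeling these primes off one at a time gives the required $F$ on $\A_{S_1\cap S_2}$. The empty open set is assigned the zero algebra.

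\emph{Stalk and global sections.} The stalk at $\eta$ is the colimit over all finite $S$ of $\cS(\A_S)\rtimes\Z_S^\times$. By definition, the global Bruhat--Schwartz space $\cS(\A_\Q)$ is the restricted tensor product, that is, the colimit of the $\cS(\A_S)$ along exactly these maps $f\mapsto f\otimes 1_{\Z_p}$. Moreover $\Q^\times=\bigcup_S\Z_S^\times$. Since crossed products commute with these filtered colimits, the stalk is $\cS(\A_\Q)\rtimes\Q^\times$. Finally, the global sections correspond to $S=\{\infty\}$, which gives $\cS(\R)\rtimes\Z^\times=\cS(\R)\rtimes\{\pm1\}$.
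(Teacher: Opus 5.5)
The paper gives no proof of this theorem; it is recalled from \cite{CC3}, so there is no internal argument to compare with. Judged on its own, your proposal is a correct and complete proof.

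\begin{itemize}
\item The restriction maps $f\mapsto f\otimes 1_{\Z_p}$ are the right ones. Part (2) forces them, since they are exactly the transition maps that define $\cS(\A_\Q)$ as a restricted tensor product.
\item Equivariance holds because elements of $\Z_S^\times$ are $p$-adic units for $p\notin S$.
\item Injectivity of the restriction maps gives locality, and quasi-compactness reduces gluing to two opens.
\item The two inputs for gluing, $\Z_{S_1}^\times\cap\Z_{S_2}^\times=\Z_{S_1\cap S_2}^\times$ and your factorization lemma, are both correctly proved.
\item The stalk and global-section computations follow as you say.
\end{itemize}

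Two small remarks.

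First, you only verify compatibility with the group action. Add one line noting that the coefficient maps are also multiplicative. This holds because $1_{\Z_p}$ is idempotent for the pointwise product, or, if one uses convolution, because the Haar measure is normalized by $\mathrm{vol}(\Z_p)=1$.

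Second, the step you flag as the main obstacle can also be done by pure linear algebra. Put $V=\cS(\A_{S_1\cap S_2})$, $W=\bigotimes_{p\in S_1\setminus S_2}\cS(\Q_p)$ and $X=\bigotimes_{p\in S_2\setminus S_1}\cS(\Q_p)$, so that $\cS(\A_{S_1\cup S_2})=V\otimes W\otimes X$. Let $w_0$ and $x_0$ be the corresponding products of the functions $1_{\Z_p}$. The two images are then $V\otimes W\otimes \C x_0$ and $V\otimes \C w_0\otimes X$. Choosing bases of $W$ and $X$ that contain $w_0$ and $x_0$ shows the intersection is $V\otimes \C w_0\otimes \C x_0$, which is the image of $\cS(\A_{S_1\cap S_2})$.

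Your support-plus-translation-invariance characterization is equally valid and more concrete. The tensor argument avoids any pointwise description of Bruhat--Schwartz functions and makes the lemma essentially formal.
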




 \appendix
 \section{Adelic description of the dual  of torsion free rank-1 groups}\label{appendual}

\subsection{Finite adeles and $\Hom(\Q, \Q/\Z)$}
 Let $\A_f = \Q \otimes_{\Z} \Zhat$ be the ring of finite adeles and 
 $\Zhat = \prod_p \Z_p$ its maximal compact subring.
We identify the quotient group $\A_f / \Zhat$ with $\Q/\Z$ via the canonical projection:
\begin{equation} \label{quotad}
\pi: \A_f \longrightarrow \A_f/\Zhat \xrightarrow{\sim} \Q/\Z.
\end{equation}
Explicitly, if $a = (a_p)_p \in \A_f$, then $\pi(a) = \sum_p \{a_p\}_p \pmod 1$, where $\{x\}_p$ denotes the `fractional part' of the $p$-adic number $x$. 
Let 
\begin{equation} \label{quotad1}
\psi: \A_f \to \Hom(\Q, \Q/\Z)\quad 
\psi(a)(q) = \pi(qa) \quad \text{for } a \in \A_f,~ q \in \Q.
\end{equation}
\begin{lem}\label{isopsi}
The map $\psi$ is an isomorphism of topological groups.
\end{lem}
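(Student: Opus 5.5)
The plan is to check, in order, that $\psi$ is a well-defined homomorphism, that it is injective, that it is surjective, and that it is bicontinuous.

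\emph{Homomorphism and continuity.} Since $\pi$ is additive, $\psi(a)$ is additive in $q$, and $\psi(a+b)=\psi(a)+\psi(b)$. We equip $\Hom(\Q,\Q/\Z)$ with the topology of pointwise convergence, i.e.\ the compact-open topology for discrete $\Q$; this is the Pontryagin-dual topology inherited from $\Hom(\Q,\R/\Z)$. For fixed $q$, the map $a\mapsto qa$ is continuous on $\A_f$. Also $\pi$ is continuous onto the discrete group $\Q/\Z$, because its kernel $\Zhat$ is open. Hence each evaluation $a\mapsto \psi(a)(q)$ is continuous, and so $\psi$ is continuous.

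\emph{Injectivity.} Suppose $\psi(a)=0$. Then $qa\in\Zhat$ for every $q\in\Q$. Taking $q=1/n$ gives $a\in n\Zhat$ for all $n$, so $a\in\bigcap_n n\Zhat=0$.

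\emph{Surjectivity.} Let $\chi\in\Hom(\Q,\Q/\Z)$. For each $n$, the value $\chi(1/n)\in\Q/\Z$ has order dividing $n$, since $n\chi(1/n)=\chi(1)$ up to a correction. It is cleaner to argue as follows. Choose $b\in\A_f$ with $\pi(b)=\chi(1)$, which is possible since $\pi$ is onto. Then $\chi-\psi(b)$ vanishes on $\Z$, so it factors through $\Q/\Z$. An element of $\Hom(\Q/\Z,\Q/\Z)=\Zhat$, the endomorphism ring, is multiplication by some $c\in\Zhat$. Identifying $\Q/\Z$ with $\A_f/\Zhat$, multiplication by $c$ is exactly $q\mapsto\pi(qc)$. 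Thus $\chi=\psi(b+c)$.

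The one point to verify carefully is the identification $\mathrm{End}(\Q/\Z)\cong\Zhat$, compatible with $\pi$. This decomposes prime by prime as $\mathrm{End}(\Q_p/\Z_p)=\Z_p$, acting by multiplication on $\Q_p/\Z_p\cong\Q_p/\Z_p$, and it is standard.

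\emph{Openness.} This is the expected main obstacle, since the target is not obviously locally compact in a way that would make the open mapping theorem apply automatically. I would argue directly. A basis of neighbourhoods of $0$ in $\A_f$ is $\{N\Zhat\}_N$. We have
\[
\psi(N\Zhat)=\{\chi \mid \chi(1/m)=0 \text{ for all } m\mid N\},
\]
which follows from the injectivity argument above, now applied with $\Z$ replaced by $\frac1N\Z$. This set equals $\{\chi\mid \chi(1/N)=0\}$, a basic open set of the pointwise topology. Hence $\psi$ is open.

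Alternatively, one can note that $\A_f$ is $\sigma$-compact and $\Hom(\Q,\Q/\Z)$ is a closed subgroup of the compact group $\Hom(\Q,\R/\Z)$, hence locally compact. A continuous bijective homomorphism from a $\sigma$-compact locally compact group onto a locally compact group is then open.
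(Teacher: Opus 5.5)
Your injectivity argument is the paper's. Your surjectivity chase (lift $\chi(1)$ through $\pi$, then recognise $\chi-\psi(b)$ as an element of $\operatorname{End}(\Q/\Z)\cong\Zhat$) is exactly the paper's short five lemma argument for its diagram, unwound by hand. Your computation $\psi(N\Zhat)=\{\chi\mid\chi(1/N)=0\}$ is a sharper form of the paper's remark that $\psi$ matches the compact open subgroups and the filtration by $n^{-1}\Zhat$. Note that the inclusion $\supseteq$ uses surjectivity, not only the injectivity argument: write $\chi=\psi(a)$; then $\pi(a/N)=0$ forces $a\in N\Zhat$.

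There is one false statement you must remove, because with it the lemma fails. The topology on $\operatorname{Hom}(\Q,\Q/\Z)$ has to be pointwise convergence with $\Q/\Z$ \emph{discrete}. This is what you actually use when you call $\{\chi\mid\chi(1/N)=0\}$ open, and it is what the paper's matching of compact open subgroups requires. It is not the topology inherited from the solenoid $\operatorname{Hom}(\Q,\R/\Z)$. There, $\psi(1/K)\to 0$ as $K\to\infty$, since $\psi(1/K)(q)=q/K \bmod \Z$, while $1/K\notin\Zhat$ for $K>1$; so $\psi^{-1}$ is not continuous for that topology.

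Your alternative open-mapping argument breaks for the same reason. $\operatorname{Hom}(\Q,\Q/\Z)$ is not closed in $\operatorname{Hom}(\Q,\R/\Z)$; it is dense, being the preimage of $\Q/\Z$ under the open surjection $\chi\mapsto\chi(1)$. If it were closed, it would be compact, and your argument would then make $\A_f$ compact.

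With $\Q/\Z$ discrete, your direct neighbourhood argument is complete. If you want an abstract substitute, note that $\psi|_{\Zhat}$ is a continuous bijection from the compact group $\Zhat$ onto the open subgroup $\operatorname{Hom}(\Q/\Z,\Q/\Z)=\{\chi\mid\chi(1)=0\}$. It is therefore a homeomorphism onto that open subgroup, and hence $\psi$ is open.
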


\begin{proof}
We proceed by establishing injectivity and surjectivity using the structural properties of $\A_f$ and $\Q$.

\emph{Injectivity.} 
Suppose $\psi(a) = 0$ for some $a \in \A_f$. By definition, this means:
$
\pi(qa) = 0~ \forall q \in \Q
$. 
This implies that $qa \in \ker(\pi) = \Zhat~ \forall q \in \Q$. The following cases arise:
\begin{itemize}[leftmargin=*, labelindent=0pt]
    \item[] If $q=1$, then  $a \in \Zhat$.
    \item[] If $q=1/n$ for an integer $n$, then $a/n \in \Zhat$, which implies $a \in n\Zhat$.
\end{itemize}
Thus, we conclude that $a \in \bigcap_{n \in \Z \setminus \{0\}} n\Zhat$.
Since $\Zhat \cong \prod_p \Z_p$ and $\bigcap_k p^k \Z_p = \{0\}$, it follows that $a = 0$.

\emph{Surjectivity.} 
We compare the defining exact sequence of $\A_f$ 
\begin{equation}\label{seqA}
0 \longrightarrow \Zhat \longrightarrow \A_f \xrightarrow{\pi} \Q/\Z \longrightarrow 0
\end{equation}
with the exact sequence obtained by applying the functor $\Hom(-, \Q/\Z)$ to the defining sequence of $\Q$ 
\[0 \to \Z \to \Q \to \Q/\Z \to 0.
\]
Since $\Q/\Z$ is divisible (injective in the category of abelian groups), the functor $\Hom(-, \Q/\Z)$ is exact, thus we have
\begin{equation}\label{seqB}
0 \longrightarrow \Hom(\Q/\Z, \Q/\Z) \longrightarrow \Hom(\Q, \Q/\Z) \longrightarrow \Hom(\Z, \Q/\Z) \longrightarrow 0
\end{equation}
Let us analyze the terms in \eqref{seqB}:
\begin{itemize}[leftmargin=*, labelindent=0pt]
    \item[] $\Hom(\Z, \Q/\Z) \cong \Q/\Z$ (determined by the image of 1).
    \item[] $\Hom(\Q/\Z, \Q/\Z) \cong \Zhat$. This is the ring of endomorphisms of the torsion group $\Q/\Z$. An element $u \in \Zhat$ acts by multiplication.
\end{itemize}
Thus, \eqref{seqB} becomes:
\begin{equation}\label{seqB1}
0 \longrightarrow \Zhat \longrightarrow \Hom(\Q, \Q/\Z) \longrightarrow \Q/\Z \longrightarrow 0
\end{equation}

The map $\psi$ in \eqref{quotad1} induces a morphism between \eqref{seqA} and \eqref{seqB1}:
\[
\begin{tikzcd}
0 \arrow[r] & \Zhat \arrow[r] \arrow[d, "\psi|_{\Zhat}"] & \A_f \arrow[r, "\pi"] \arrow[d, "\psi"] & \Q/\Z \arrow[r] \arrow[d, "\text{id}"] & 0 \\
0 \arrow[r] & \Hom(\Q/\Z, \Q/\Z) \arrow[r] & \Hom(\Q, \Q/\Z) \arrow[r] & \Hom(\Z, \Q/\Z) \arrow[r] & 0
\end{tikzcd}
\]
The right vertical arrow is the identity (canonical isomorphism): for $a \in \A_f$, restricting $\psi(a)$ to $\Z$ gives $n \mapsto \pi(na)$, which is determined by $\pi(a) \in \Q/\Z$.
 
 The left vertical arrow maps $u \in \Zhat$ to the homomorphism $x \mapsto \pi(ux)$ on $\Q/\Z$. This is the canonical identification of $\Zhat$ with $\operatorname{End}(\Q/\Z)$.

By the Short Five Lemma, since the outer vertical maps are isomorphisms, the middle map $\psi$ is an isomorphism.
Next, we consider the investigation of $\psi$ in topological terms.\vspace{.02in}

\emph{Topology.}
$\A_f$ carries the restricted product topology, making it a locally compact group, while 
$\Hom(\Q, \Q/\Z)$ carries the compact-open topology. Since $\Q$ is discrete, this is the topology of pointwise convergence.
The isomorphism $\psi$ is a homeomorphism because it matches the compact open subgroups, namely:\vspace{.02in}

- The maximal compact subgroup of $\A_f$ is $\Zhat$.

- The maximal compact subgroup of $\Hom(\Q, \Q/\Z)$ is $\Hom(\Q/\Z, \Q/\Z) \cong \Zhat$ (those maps vanishing on $\Z$).

The filtration of $\A_f$ by $n^{-1}\Zhat$ corresponds exactly to the filtration of homomorphisms by the order of their restriction to $\Z$.
\end{proof}

\subsection{The dual of a rank-1 group}

We derive an explicit formula for the character dual $L^\vee$ of a rank-1 subgroup $L \subset \Q$ using the adelic annihilator.
\begin{lem}
Let $L \subseteq \mathbf{Q}$ be a subgroup. Its character dual
\[
L^{\vee} = \Hom(L,\mathbf{Q}/\mathbf{Z})
\]
identifies canonically with the quotient of the finite adèle ring by the adelic annihilator of L,
\[
L^{\vee} \;\cong\; \mathbb{A}_f / L^{\perp}.
\]
Here the adelic annihilator $L^{\perp}$ is defined by
 \[
 L^{\perp}
\;=\;
\{\, a \in \mathbb{A}_f \mid aL \subseteq \widehat{\mathbf{Z}} \,\}
\;=\;
\{\, a \in \mathbb{A}_f \mid \pi(ha)=0 \text{ for all } h\in L \,\}.
\]
\end{lem}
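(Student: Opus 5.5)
The plan is to obtain the identification by restricting the isomorphism $\psi:\A_f\xrightarrow{\sim}\Hom(\Q,\Q/\Z)$ of Lemma~\ref{isopsi} to the subgroup $L$. Consider the composite
\[
r_L\circ\psi:\ \A_f\longrightarrow \Hom(\Q,\Q/\Z)\longrightarrow \Hom(L,\Q/\Z)=L^\vee,\qquad a\longmapsto \bigl(h\mapsto \pi(ha)\bigr),
\]
where $r_L$ is restriction of characters from $\Q$ to $L$. The proof then splits into two steps: surjectivity of this composite, and identification of its kernel with $L^\perp$. The first isomorphism theorem then yields $L^\vee\cong \A_f/L^\perp$, canonically, since no choices enter beyond the fixed embedding $L\subseteq\Q$.

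For surjectivity, apply $\Hom(-,\Q/\Z)$ to the exact sequence $0\to L\to\Q\to\Q/L\to 0$. Since $\Q/\Z$ is divisible, hence injective, the restriction map $r_L:\Hom(\Q,\Q/\Z)\to\Hom(L,\Q/\Z)$ is surjective. This is the same exactness argument already used in the proof of Lemma~\ref{isopsi}. Because $\psi$ is bijective by that lemma, the composite $r_L\circ\psi$ is surjective. Alternatively, one could give a direct argument: any character of $L$ extends to $\Q=L\otimes\Q$, since the rank-1 group $L$ is essential in $\Q$. The injectivity route is cleaner, so I would use it.

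For the kernel, $a\in\ker(r_L\circ\psi)$ means $\pi(ha)=0$ for all $h\in L$. Since $\ker\pi=\Zhat$ by \eqref{seqA}, this is equivalent to $ha\in\Zhat$ for all $h\in L$, that is, $aL\subseteq\Zhat$. This gives both descriptions of $L^\perp$ in the statement at once, and also shows that $L^\perp$ is a subgroup, in fact a $\Zhat$-submodule.

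No step is a serious obstacle. The only point needing a little care is the topological refinement, if one wants a homeomorphism rather than just a group isomorphism. In that case I would check that $L^\perp$ is closed: it is an intersection of preimages of the open-closed set $\Zhat$ under the continuous maps $a\mapsto ha$. I would then argue that the continuous bijection $\A_f/L^\perp\to L^\vee$ is open, giving $L^\vee$ the topology of pointwise convergence. This follows from openness of $\psi$ (Lemma~\ref{isopsi}) together with the fact that restriction to a subgroup is open for the pointwise topologies of compact duals. Since the statement only asserts a canonical identification, I would present the algebraic isomorphism as the core and treat the topological remark briefly.
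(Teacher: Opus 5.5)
Your proposal is correct and is essentially the paper's own proof: apply $\operatorname{Hom}(-,\Q/\Z)$ to $0\to L\to\Q\to\Q/L\to 0$, use injectivity of $\Q/\Z$ to get surjectivity of restriction, transport through $\psi$, identify the kernel with $L^\perp$ via $\ker\pi=\Zhat$, and conclude with the first isomorphism theorem. One quibble concerns your optional topological aside (the paper does not treat topology for this lemma at all): the duals $\operatorname{Hom}(-,\Q/\Z)$ here are not compact, since already $\operatorname{Hom}(\Q,\Q/\Z)\cong\A_f$ is only locally compact, so openness of the restriction map should be checked directly on the basic neighbourhoods $N\Zhat$ rather than by appeal to compact duality.
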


\begin{proof}
Consider the short exact sequence of discrete groups:
\[
0 \longrightarrow L \xrightarrow{i} \Q \longrightarrow \Q/L \longrightarrow 0.
\]
Since $\Q/\Z$ is an injective $\Z$-module (it is divisible), the functor $\Hom(-, \Q/\Z)$ is exact. Applying this functor yields the exact sequence of dual groups:
\[
0 \longleftarrow \Hom(L, \Q/\Z) \xleftarrow{i^*} \Hom(\Q, \Q/\Z) \xleftarrow{\pi^*} \Hom(\Q/L, \Q/\Z) \longleftarrow 0
\]
Using the identification $\Hom(\Q, \Q/\Z) \cong \A_f$, we can identify the term $\Hom(\Q/L, \Q/\Z)$ with the subgroup of $\A_f$ consisting of elements that vanish on $L$ (under the pairing).
\[
\ker(i^*) = \text{Image}(\pi^*) \cong \{ a \in \A_f \mid \psi(a)(h) = 0, \, \forall h \in L \}.
\]
The condition $\psi(a)(h) = 0$ in $\Q/\Z$ is equivalent to $ah \in \Zhat$ in $\A_f$. Thus, $\ker(i^*) = L^\perp$.
By the First Isomorphism Theorem:
\[
L^\vee = \text{Image}(i^*) \cong \A_f / \ker(i^*) = \A_f / L^\perp
\]
\end{proof}

Let $S$ be the set of primes $p$ such that $L$ is $p$-divisible (i.e., $L \otimes \Z_p \cong \Q_p$).
This set $S$ characterizes the "singularities" of the group $L$. Precisely, we have:
\begin{itemize}[leftmargin=*, labelindent=0pt]
    \item[] For $L = \Z$, $S = \emptyset$.
    \item[] For $L = \Z[1/p]$, $S = \{p\}$.
    \item[] For $L = \Q$, $S = \mathcal{P}$ (the set of all primes).
\end{itemize}
We call the restricted product 
\[
\A_S = {\prod_{p \in S}}^{'} \Q_p
\]
the \emph{ring of $S$-adeles}.

\begin{prop} Let $a\in \A_f$ and $L=\{q\in \Q\mid aq\in \Zhat\}$. Let $S$ be the set of all primes with $a_p=0$. 
The adelic annihilator of $L$ is given by:
\[
L^\perp = \left( \prod_{p \notin S}a_p \Z_p \right) \times \{0\}_S \subset \A_f.
\]
The dual group decomposes as:
\begin{equation}\label{dualgr}
L^\vee \cong \A_f / L^\perp \cong \left({\prod_{p \notin S}}^{'} \Q_p/a_p\Z_p\right) \times \A_S.
\end{equation}
\end{prop}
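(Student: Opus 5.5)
The plan is to compute $L^\perp$ one prime at a time and then read off the quotient from the preceding lemma, $L^\vee\cong \A_f/L^\perp$. Write the divisor of $L$ as $D=(n_p)$. By Remark~\ref{ade}, and as in the proof of Lemma~\ref{tight}, $n_p=v_p(a_p)$, with $n_p=\infty$ exactly when $a_p=0$, i.e.\ exactly for $p\in S$. An adele $b=(b_p)\in\A_f$ satisfies $bL\subseteq\Zhat$ if and only if $b_pL\subseteq\Z_p$ for every $p$. Since $\inf\{v_p(x)\mid x\in L\}=-n_p$, this condition reads $v_p(b_p)\ge n_p$ for each $p$.

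\emph{Primes $p\notin S$.} Here $n_p=v_p(a_p)$ is finite, so the condition says $b_p\in p^{n_p}\Z_p=a_p\Z_p$.

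\emph{Primes $p\in S$.} Here $n_p=\infty$, which forces $b_p=0$. One checks this directly: $L$ contains elements of arbitrarily negative $p$-valuation, so a nonzero $b_p$ would send some $x\in L$ outside $\Z_p$.

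Since $v_p(a_p)\ge 0$ for almost all $p$, we have $a_p\Z_p\subseteq\Z_p$ for almost all $p$. Hence $\prod_{p\notin S}a_p\Z_p\times\{0\}_S$ is a genuine subset of $\A_f$, indeed a closed subgroup, and it equals $L^\perp$. This gives the first formula.

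For the decomposition of $L^\vee$, split $\A_f=\A_{S^c}\times\A_S$ as a product of restricted products over $S^c$ and over $S$. The factorisation is compatible with the restricted-product condition, because that condition is imposed prime by prime. Under this splitting $L^\perp=(\prod_{p\notin S}a_p\Z_p)\times\{0\}$, so
\[
\A_f/L^\perp\cong \Bigl(\A_{S^c}\big/\textstyle\prod_{p\notin S}a_p\Z_p\Bigr)\times\A_S .
\]
It remains to identify the first factor with the restricted product $\prod'_{p\notin S}\Q_p/a_p\Z_p$, taken with respect to the subgroups $\Z_p/a_p\Z_p$. To do this I will use the componentwise map $\A_{S^c}\to\prod_{p\notin S}\Q_p/a_p\Z_p$. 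Its image lies in the restricted product because $b_p\in\Z_p$ for almost all $p$. Its kernel is $\prod_{p\notin S} a_p\Z_p$. It is surjective: given a family $(c_p)$ with $c_p\in\Z_p/a_p\Z_p$ for almost all $p$, lift each $c_p$ to $\Z_p$ at those primes and arbitrarily at the finitely many others, which produces an element of $\A_{S^c}$.

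The step needing the most care is this last identification, specifically getting the restricted-product bookkeeping right. For almost all $p$ one has $v_p(a_p)=0$, so $\Z_p/a_p\Z_p=0$, but for infinitely many $p$ one may have $v_p(a_p)>0$, so the restricted product is not simply a direct sum. I will therefore state explicitly that the restriction is relative to the subgroups $\Z_p/a_p\Z_p$. Topologically, the isomorphism is a homeomorphism because it carries the open subgroup $\prod_{p\notin S}\Z_p$ onto the compact open subgroup $\prod_{p\notin S}\Z_p/a_p\Z_p$.
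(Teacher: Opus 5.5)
Your proof is correct and follows the paper's argument: you compute the condition $bL\subseteq\Zhat$ prime by prime and then read off the quotient componentwise. The only difference in the local step is that you phrase it via $\inf_{x\in L}v_p(x)=-n_p$, whereas the paper uses $L_p=L\otimes_\Z\Z_p$. Your explicit checks that $L^\perp\subseteq\A_f$, and that the restricted product is taken relative to $\Z_p/a_p\Z_p$, supply bookkeeping the paper leaves implicit.
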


\begin{proof}
The condition $x \in L^\perp$ means $x h \in \widehat{\Z}$ for all $h \in L$. This condition factorizes locally as: $x_p h_p \in \Z_p$ for all $h_p \in L_p$ where $L_p:=L\otimes_\Z \Z_p$. One sees that:
\begin{itemize}[leftmargin=*, labelindent=0pt]
    \item[] If $p \notin S$, then $L_p \cong a_p^{-1} \Z_p$. The condition $x_p L_p \subset \Z_p$ is equivalent to  $x_p \in a_p\Z_p$. Thus the local component is $a_p\Z_p$.
    \item[] If $p \in S$, then $L_p \cong \Q_p$. The condition $x_p \Q_p \subset \Z_p$ implies $x_p = 0$ (since $\Q_p$ is unbounded). Thus the local component is $\{0\}$.
\end{itemize}
The quotient $\A_f / L^\perp$ is  the restricted product of its components, where:
\begin{itemize}[leftmargin=*, labelindent=0pt]
\item[] For $p \in S$: $\Q_p / \{0\} \cong \Q_p$. The product of these gives $\A_S$.
    \item[] For $p \notin S$: $\Q_p /a_p \Z_p$. The restricted product of these gives $\prod'_{p \notin S} \Q_p/a_p\Z_p$.  
\end{itemize}
\end{proof}
The restricted product $\prod'_{p \notin S} \Q_p/a_p\Z_p$ contains the torsion group $\bigoplus_{p \notin S} \Q_p/a_p\Z_p$. The group $\Q_p/a_p\Z_p$ is isomorphic, by multiplication by $a_p$, to the $p$-torsion group $\Q_p/\Z_p=\mu_{p^\infty}$.

\begin{rem}It is not true in general that the restricted product $\prod'_{p \notin S} \Q_p/a_p\Z_p$ is equal to the torsion group $\bigoplus_{p \notin S} \Q_p/a_p\Z_p$. In fact,  this fails as soon as there are infinitely many $p\notin S$ such that  
$n_p=v(a_p)>0$. 
Here $\Z_p / a_p \Z_p$ is non-trivial for infinitely many $p$. The restricted product allows for elements that have non-zero entries in these finite groups for infinitely many primes. Consequently, $\prod'_{p \notin S} \Q_p/a_p\Z_p$ is uncountable and contains elements of infinite order (e.g.  a sequence $(x_p)$ where $x_p \in \Z_p/a_p\Z_p$ has order $p^{v(a_p)}$, and these orders grow to infinity).
\end{rem}
\subsection{The torsion subgroup of the dual}

\begin{prop}
Let $L$ be a torsion-free abelian group. There is a canonical isomorphism:
\[
\rm{Tor}( \operatorname{Hom}(L, \Q/\Z)) \cong \varinjlim_{n} \operatorname{Hom}(L, \Z/n\Z),
\]
where the limit is taken over the directed system of integers ordered by divisibility ($n \mid m$), with transition maps induced by the canonical inclusions $\Z/n\Z \hookrightarrow \Z/m\Z$ (mapping $1 \mapsto m/n$).
\end{prop}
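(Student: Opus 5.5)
The plan is to realize both sides inside $\operatorname{Hom}(L,\Q/\Z)$, using the identification $\Z/n\Z \cong \frac{1}{n}\Z/\Z \subset \Q/\Z$.

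First, for each $n$ let $\iota_n:\Z/n\Z\hookrightarrow \Q/\Z$ be $k\mapsto k/n$. Post-composition gives an injective homomorphism $j_n:\operatorname{Hom}(L,\Z/n\Z)\to \operatorname{Hom}(L,\Q/\Z)$; injectivity holds because $\iota_n$ is injective and $\operatorname{Hom}(L,-)$ is left exact. For $n\mid m$ the inclusion $\Z/n\Z\hookrightarrow\Z/m\Z$, $1\mapsto m/n$, satisfies $\iota_m\circ(\cdot\, m/n)=\iota_n$, since $\frac{(m/n)k}{m}=\frac{k}{n}$. Hence $j_m$ composed with the transition map equals $j_n$. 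The maps $j_n$ are therefore compatible and induce a map $J:\varinjlim_n\operatorname{Hom}(L,\Z/n\Z)\to\operatorname{Hom}(L,\Q/\Z)$. The system is directed, and each $j_n$ is injective with compatible transitions, so $J$ is injective.

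Second, I identify the image. Each element of $\operatorname{Hom}(L,\Z/n\Z)$ is killed by $n$, so the image of $J$ lies in $\rm{Tor}(\operatorname{Hom}(L,\Q/\Z))$. Conversely, let $\chi:L\to\Q/\Z$ satisfy $n\chi=0$. Every value $\chi(x)$ then lies in the $n$-torsion of $\Q/\Z$, which is exactly $\frac1n\Z/\Z=\iota_n(\Z/n\Z)$. Thus $\chi$ factors uniquely through $\iota_n$, and $\chi=j_n(\chi')$ for some $\chi'$. This proves surjectivity onto the torsion subgroup. Equivalently, $\rm{Tor}(L^\vee)=\bigcup_n L^\vee[n]$ and $L^\vee[n]=\operatorname{Hom}(L,(\Q/\Z)[n])$, and the colimit reproduces this increasing union.

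Canonicity is clear, since no choice is made beyond the standard embeddings $\iota_n$. The isomorphism is also natural in $L$, because all maps are post-compositions. I do not expect a real obstacle here; torsion-freeness of $L$ is not even needed for the argument. The only point requiring care is checking that the transition maps match the convention $1\mapsto m/n$ rather than reduction, which the identity $\iota_m\circ(\cdot\, m/n)=\iota_n$ settles.
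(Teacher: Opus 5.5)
Your proposal is correct and follows essentially the same route as the paper: both arguments identify the torsion characters with those whose image lies in $\frac{1}{n}\Z/\Z\cong\Z/n\Z$ for some $n$, and realize the colimit as the increasing union of these subgroups. You are more careful than the paper's proof in explicitly checking that the transition maps $1\mapsto m/n$ are compatible with the embeddings $\iota_n$ and that the induced map from the colimit is injective.
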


\begin{proof}
Recall that $\Q/\Z$ is the directed union of its finite subgroups:
\[
\Q/\Z = \bigcup_{n \ge 1} \left( \frac{1}{n}\Z \big/ \Z \right) \cong \varinjlim_{n} \Z/n\Z.
\]
Let $\phi \in \operatorname{Hom}(L, \Q/\Z)$.
The element $\phi$ belongs to the torsion subgroup if and only if there exists an integer $n \ge 1$ such that $n \cdot \phi = 0$.
The condition $n \cdot \phi = 0$ means that for all $x \in L$, $n \phi(x) = 0$ in $\Q/\Z$.
This is equivalent to saying that the image of $\phi$ lies in the subgroup of elements of order dividing $n$:
\[
\operatorname{Im}(\phi) \subseteq \frac{1}{n}\Z \big/ \Z \cong \Z/n\Z.
\]
Thus, a homomorphism is torsion if and only if it factors through one of the finite groups $\Z/n\Z$.
The colimit $\varinjlim_{n} \operatorname{Hom}(L, \Z/n\Z)$ consists precisely of equivalence classes of such maps, which maps isomorphically to the set of maps into $\Q/\Z$ with bounded image.
\end{proof}
\begin{prop}\label{proptor} Let $a\in \A_f$ and $L=\{q\in \Q\mid aq\in \Zhat\}$. Let $S$ be the set of all primes with $a_p=0$. 
The torsion subgroup of the dual $L^\vee$ is the subgroup :
\[
\rm{Tor}(L^\vee) = \bigoplus_{p \notin S} \Q_p/a_p\Z_p \subset L^\vee
\]
It is isomorphic to the group $\mu_{(S)}$ of roots of unity of order prime to $S$.
\end{prop}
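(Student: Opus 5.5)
The plan is to work entirely inside the adelic presentation \eqref{dualgr}, namely
\[
L^\vee \cong \Bigl({\prod_{p\notin S}}^{'}\Q_p/a_p\Z_p\Bigr)\times \A_S ,
\]
and to compute the torsion factor by factor.

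\emph{Step 1: the $\A_S$ factor is torsion-free.} The factor $\A_S$ is a $\Q$-vector space: it is a restricted product of the fields $\Q_p$, and multiplication by $n$ is invertible on it. Hence it contributes no torsion. The torsion of $L^\vee$ is therefore the torsion of the restricted product $\prod'_{p\notin S}\Q_p/a_p\Z_p$.

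\emph{Step 2: torsion elements of the restricted product.} Let $x=(x_p)$ be an element with $nx=0$. For every $p\nmid n$, multiplication by $n$ is an automorphism of $\Q_p/a_p\Z_p$, because $n\in\Z_p^\times$. So $nx_p=0$ forces $x_p=0$ for all $p\nmid n$. Thus $x$ is supported on the finitely many primes dividing $n$, and $x$ lies in the direct sum. Conversely, each group $\Q_p/a_p\Z_p$ is torsion, since every element is killed by a power of $p$. Hence every element of the direct sum $\bigoplus_{p\notin S}\Q_p/a_p\Z_p$ is torsion, and it indeed sits inside the restricted product. This gives
\[
\mathrm{Tor}(L^\vee)=\bigoplus_{p\notin S}\Q_p/a_p\Z_p .
\]

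\emph{Step 3: identification with $\mu_{(S)}$.} For $p\notin S$ we have $a_p\neq 0$. Multiplication by $a_p$ is then an isomorphism $\Q_p/\Z_p\to\Q_p/a_p\Z_p$. Taking the direct sum over $p\notin S$ gives
\[
\bigoplus_{p\notin S}\Q_p/a_p\Z_p \;\cong\; \bigoplus_{p\notin S}\Q_p/\Z_p=\mu_{(S)}\subset\bigoplus_p\Q_p/\Z_p=\Q/\Z .
\]
The last equality is the primary decomposition of $\Q/\Z$. I would also remark that $S$, defined by $a_p=0$, agrees with the set of primes at which $L$ is $p$-divisible. This holds because $n_p=v_p(a_p)=\infty$ exactly when $a_p=0$.

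\emph{Main obstacle.} The one delicate point is Step 2: a torsion element of the restricted product cannot have infinitely many nonzero coordinates. The remark just before the statement shows that elements of infinite order do occur, so this must be checked. The argument is the invertibility of $n$ in $\Z_p$ for $p\nmid n$. I would state this explicitly, and note that it does not depend on the restricted-product condition.
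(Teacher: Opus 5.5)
Your proof is correct and follows the paper's argument: $\A_S$ is torsion-free, which reduces the question to the restricted product, and for $p\nmid n$ the invertibility of $n$ on $\Q_p/a_p\Z_p$ forces an $n$-torsion element to have finite support. You also make explicit the reverse inclusion and the identification with $\mu_{(S)}$ via multiplication by $a_p$; the paper leaves this identification to the remark just before the statement.
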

\proof By \eqref{dualgr} the dual group is the product of the two terms
$$
L^\vee \cong  \left({\prod_{p \notin S}}^{'} \Q_p/a_p\Z_p\right) \times \A_S.
$$
Let $x\in \rm{Tor}(L^\vee)$ be such that $nx=0$. Since $\A_S$ is torsion free one has
$$
x\in \left({\prod_{p \notin S}}^{'} \Q_p/a_p\Z_p\right)
$$
and for primes $q$ not dividing $n$ the component $x_q$ is $x_q=0$ since $\Q_q/a_q\Z_q$ is isomorphic to $\Q_q/\Z_q$ whose $n$ torsion is $0$. Thus $x$ is a finite sum of elements of $\Q_p/a_p\Z_p$ for primes dividing $n$.\endproof
\endproof 
\subsection{Consistency with Pontryagin duality}

We clarify the relationship between the character dual $L^\vee = \operatorname{Hom}(L, \Q/\Z)$ and the standard Pontryagin dual $L^* = \operatorname{Hom}(L, \R/\Z)$.

\begin{prop}\label{dualities}
Let $H$ be a discrete abelian group.
\begin{enumerate}
    \item There is a strict inclusion $\rm{Tor}(H^*) \subseteq H^\vee \subset H^*$.
    \item Any homomorphism from a torsion group into the Pontryagin dual factors through the character dual. Specifically:
    \[
    \operatorname{Hom}(\Q/\Z, H^*) \cong \operatorname{Hom}(\Q/\Z, H^\vee).
    \]
    \item Furthermore, the image of any such map lies in the torsion subgroup $\rm{Tor}(H^\vee)$.
\end{enumerate}
\end{prop}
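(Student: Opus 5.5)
The plan is to realize $\Q/\Z$ inside $\R/\Z$ and to view $H^\vee=\Hom(H,\Q/\Z)$ as a subgroup of $H^*=\Hom(H,\R/\Z)$ by composing with the inclusion $\Q/\Z\hookrightarrow\R/\Z$. This composition is injective, so $H^\vee\subseteq H^*$. The whole argument rests on one elementary fact: the torsion subgroup of $\R/\Z$ is exactly $\Q/\Z$, because $n x\in\Z$ with $x\in\R$ forces $x\in\frac1n\Z$.

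\emph{Step 1 (item 1).} Let $\chi\in H^*$ with $n\chi=0$. Then for every $h\in H$ we have $n\chi(h)=0$ in $\R/\Z$, hence $\chi(h)\in\frac1n\Z/\Z\subset\Q/\Z$. So $\chi$ factors through $\Q/\Z$ and lies in $H^\vee$, giving $\mathrm{Tor}(H^*)\subseteq H^\vee$.

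For strictness of $H^\vee\subset H^*$ one needs $H$ to be nontrivial. I read the statement with this implicit assumption, which holds in the rank-one setting of the paper. Take $0\neq h_0\in H$.
\begin{itemize}
\item If $h_0$ has infinite order, $\langle h_0\rangle\cong\Z$. Define $h_0\mapsto\sqrt2$ mod $\Z$ and extend to all of $H$ using divisibility (injectivity) of $\R/\Z$. The result is a character whose value at $h_0$ is irrational, so it is not in $H^\vee$.
\item If $H$ is torsion, every character already lands in $\Q/\Z$, so $H^\vee=H^*$ and strictness fails.
\end{itemize}
Accordingly, the strict inclusion is to be asserted only for $H$ non-torsion, which covers the torsion-free rank-one case relevant here. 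The first inclusion can also be strict: for $H=\Q$, Lemma~\ref{isopsi} gives $H^\vee\cong\A_f$, which is not torsion.

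\emph{Step 2 (items 2 and 3).} Let $f\colon\Q/\Z\to H^*$ be a homomorphism. Each $r\in\Q/\Z$ is torsion, say $nr=0$, so $nf(r)=0$ and $f(r)\in\mathrm{Tor}(H^*)$. By Step 1, $f(r)\in H^\vee$, and since $n f(r)=0$ in $H^\vee$, in fact $f(r)\in\mathrm{Tor}(H^\vee)$. Thus every such $f$ factors uniquely through the inclusion $H^\vee\hookrightarrow H^*$, and its image lies in $\mathrm{Tor}(H^\vee)$.

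Since composition with an injective map is injective on $\Hom(\Q/\Z,-)$, this gives the bijection $\Hom(\Q/\Z,H^\vee)\to\Hom(\Q/\Z,H^*)$. Items 2 and 3 follow together.

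The only genuine subtlety is the strictness claim in item 1, which requires the non-torsion hypothesis noted in Step 1. Everything else reduces to the identity $\mathrm{Tor}(\R/\Z)=\Q/\Z$.
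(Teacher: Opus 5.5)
Your proof is correct and follows the paper's argument: both use $\mathrm{Tor}(\R/\Z)=\Q/\Z$ to get $\mathrm{Tor}(H^*)\subseteq H^\vee$, and then use the fact that every element of $\Q/\Z$ is torsion to obtain items 2 and 3. Your caveat on strictness is a valid refinement that the paper omits. The paper only remarks that $H^\vee$ is generally larger than $\mathrm{Tor}(H^*)$, whereas you correctly show that $H^\vee\subsetneq H^*$ holds exactly when $H$ is not torsion, which includes the torsion-free rank-one case where the proposition is applied.
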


\begin{proof}
1. Let $\chi \in H^*$ be a torsion element. Then $n\chi = 0$ for some $n$, implying $\chi(H) \subseteq \frac{1}{n}\Z/\Z \subset \Q/\Z$. Thus $\chi \in H^\vee$.
However, $H^\vee$ may contain elements of infinite order (e.g., the $\Q_p$ component when $H=\Z[1/p]$), so $L^\vee$ is generally larger than $\rm{Tor}(H^*)$.

2. Let $\Phi: \Q/\Z \to H^*$. For any $z \in \Q/\Z$, let $n$ be its order. Then $n \Phi(z) = \Phi(nz) = 0$. Thus $\Phi(z)$ is a torsion element in $H^*$. By (1), $\Phi(z) \in H^\vee$.

3. Since $\Phi(z)$ has finite order, it belongs to $\rm{Tor}(H^\vee)$. In the case $H=\Z[1/p]$, where $H^\vee \cong \rm{Tor}(H^\vee) \times \Q_p$, this implies the map is zero on the $\Q_p$ factor.
\end{proof}


\section{$\Pic(\spz)$ and the monoid of the points of $\nat$}\label{appB}

We analyze the categorical structure of the topos associated with the multiplicative monoid $\mathbf{N}^\times$, and we show that the arithmetic tensor product of rank-1 groups arises naturally from the multiplication of integers.

Let $M = \mathbf{N}^\times$ be the monoid of non-zero natural numbers under multiplication. We consider Grothendieck's presheaf topos $\nat = \mathbf{PSh}(M^{\mathrm{op}})$, that is the category of sets equipped with a right action of $M$.
A point of  $\nat$ is a geometric morphism $p: \mathbf{Set} \to \nat$. By Diaconescu's theorem, such points are classified by flat functors $F: M \to \mathbf{Set}$ (or equivalently, flat left $M$-sets).
Since $M$ is a commutative cancellative monoid, the category of flat $M$-sets is equivalent to the category of ordered  rank-1 abelian groups (viewed as direct limits of the free $M$-set). Thus, a point $p$ of $\nat$ is uniquely determined, up to isomorphism, by a torsion free rank $1$  group since the two orders on such a group are isomorphic by $x\mapsto -x$.

The multiplication of integers defines a monoid homomorphism:
$
\mu: M \times M \rightarrow M$, $(m, n) \mapsto mn$.

This map induces a geometric morphism between the corresponding toposes. On the level of points, this structure induces a product operation. Given two points $L_1$ and $L_2$ (viewed as flat $M$-sets), their external product $L_1 \boxtimes L_2$ is a flat $(M \times M)$-set. The product point $L$ is obtained by the left Kan extension along $\mu$:
\[
L = \operatorname{Lan}_\mu (L_1 \boxtimes L_2).
\]
Algebraically, this Kan extension corresponds to the tensor product over the monoid ring $\mathbf{Z}[M]$ (that acts on the sets via the integers): $
L \cong L_1 \otimes_{\mathbf{Z}[M]} L_2$.

Since the action of $M$ on these groups is compatible with the standard multiplication in $\mathbb{Z}$, this tensor product over the monoid coincides with the standard tensor product of abelian groups $
L \cong L_1 \otimes_{\mathbf{Z}} L_2$. 
Thus, the monoidal structure on the topos $\nat$ induced by the multiplication of integers corresponds precisely to the tensor product of the rank-1 groups parametrizing its points.

\vspace{5pt}
\small
\noindent

\begin{tabular}{p{0.48\textwidth} p{0.48\textwidth}}
\raggedright
Alain Connes\\
Coll\`ege de France\\
3 Rue d'Ulm\\
75005 Paris\\
France\\
\href{mailto:alain@connes.org}{alain@connes.org}
&
\raggedleft
Caterina Consani\\
Department of Mathematics\\
Johns Hopkins University\\
Baltimore, MD 21218\\
USA\\
\href{mailto:cconsan1@jhu.edu}{cconsan1@jhu.edu}
\end{tabular}


\begin{thebibliography}{9}

\bibitem{Baer} R.~Baer, \emph{ Abelian groups without elements of finite order}. Duke Mathematical Journal 3, no. 1 (1937): 68--122.

 
\bibitem{Co-zeta} 
A.~Connes, 
\emph{Trace formula in noncommutative geometry and the zeros of the Riemann zeta function}, 
Selecta Math. (N.S.) 5 (1999), no.~1, 29--106.

\bibitem{CC1} A.~Connes, C.~Consani, {\em Geometry of the Arithmetic Site}. Adv. Math. 291 (2016), 274--329.

\bibitem{CC2}  A. Connes, C. Consani, \emph{Absolute Algebra and Segal's $\Gamma$-Rings: au Dessous de $\overline{\Spec \mathbf{Z}}$}, J. Number Theory 162 (2016), 518-551.

\bibitem{CC3} A. Connes, C. Consani, \textit{Knots, primes and class field theory}, to appear in Regulators V proceedings, Contemporary Mathematics AMS., arXiv:2501.06560.


\bibitem{Mazur} B.~Mazur
\emph{ Notes on étale cohomology of number fields}. Ann. Sci. École Norm. Sup. (4) 6 (1973), 521-552 (1974). 

\bibitem{Ngo} B.C. Ng\^o, \emph{On a certain sum of automorphic L-functions}. In Automorphic Forms and
Related Geometry: Assessing the Legacy of I.I. Piatetski-Shapiro, volume 614 of Contemp. Math., pages 337-343. Amer. Math. Soc., Providence, RI, 2014.


\bibitem{Rotman} J.~J.~Rotman \emph{An introduction to the theory of groups} Fourth edition, Graduate texts in mathematics, 138, Springer.

\bibitem{Serre} J.~P.~Serre, \emph{Corps locaux}. (French) Publications de l'Institut de Mathématique de l'Université de Nancago, VIII. Actualités Scientifiques et Industrielles  No. 1296. Hermann, Paris, 1962. 

\bibitem{Vinberg}  E.~B.~Vinberg, \emph{On reductive algebraic semigroups}, in Lie groups and Lie algebras: E. B. Dynkin's Seminar, Amer. Math. Soc. Transl. Ser. 2, 169 (1995), 145--182.


\bibitem{Weil} 
A.~Weil, 
\emph{Sur les formules explicites de la th\'eorie des nombres premiers}, 
Comm. S\'em. Math. Univ. Lund, 1952, 252--265.
\end{thebibliography}
\end{document}